\documentclass[11pt,a4paper]{article}

\usepackage[utf8]{inputenc}
\usepackage[T1]{fontenc}
\usepackage{amsmath,amssymb,amsfonts,amsthm,mathtools}
\usepackage{geometry}
\usepackage{enumitem}
\usepackage{microtype}
\usepackage{hyperref}
\usepackage{cite}
\usepackage{xcolor}
\hypersetup{colorlinks=true,linkcolor=blue,citecolor=blue,urlcolor=blue}

\newtheorem{theorem}{Theorem}[section]
\newtheorem{proposition}[theorem]{Proposition}
\newtheorem{lemma}[theorem]{Lemma}
\newtheorem{corollary}[theorem]{Corollary}
\theoremstyle{definition}
\newtheorem{definition}[theorem]{Definition}
\newtheorem{remark}[theorem]{Remark}

\newcommand{\C}{\mathbb C}
\newcommand{\Der}{\operatorname{Der}}
\newcommand{\Inn}{\operatorname{Inn}}
\newcommand{\LocDer}{\operatorname{LocDer}}
\newcommand{\ad}{\operatorname{ad}}
\newcommand{\g}{\mathfrak g}
\newcommand{\slc}{\mathfrak{sl}_2(\C)}
\newcommand{\sod}{\mathfrak{so}_d(\C)}

\title{\textbf{Local Derivations on Conformal Galilei Algebras and Their Central Extensions}}
\author{
Khusainboy Atajonov\thanks{National University of Uzbekistan named after Mirzo Ulugbek, 4 University Street, 100174, Tashkent, Uzbekistan. Email: \texttt{atajonovxusainboy@gmail.com}}
\and
Bekzod B. Sultonboyev\thanks{Department of Algebra and Mathematical Engineering, Urgench State University, Urgench, Uzbekistan. Email: \texttt{bekzodbekbobomurodovich@gmail.com}}
\and
Bakhtiyor B. Yusupov\thanks{V.~I.~Romanovskiy Institute of Mathematics, Academy of Sciences of Uzbekistan, Tashkent, Uzbekistan; Department of Algebra and Mathematical Engineering, Urgench State University, Urgench, Uzbekistan. Email: \texttt{baxtiyor\_yusupov\_93@mail.ru}}
}
\date{}

\begin{document}
\maketitle

\begin{abstract}
We give a unified study of local derivations on conformal Galilei Lie algebras in three natural settings: the algebra without central extension, the mass central extension, and the exotic central extension. We first determine the structural form of the derivation algebras and isolate the relevant outer derivations. For the mass extension, we complete the one-string rigidity at the two low parameters not covered by the previous one-dimensional treatment, using direct calculations, and combine this with new spatial compatibility arguments to obtain the result in arbitrary spatial dimension. The exotic extension is treated by a self-contained argument from its defining relations. For the algebra without central extension we resolve the remaining algebraic-reflexivity problem completely. If $d\ge2$, every local derivation is again a derivation. In spatial dimension $d=1$, the same conclusion holds when $2\ell$ is even and also for $\ell=\frac12$; however, for odd $2\ell\ge3$ there is an additional $(2\ell-1)$-dimensional space of pure local derivations. This gives a sharp contrast with the mass central extension, where the central Heisenberg pairing removes precisely this extra local freedom. We also determine the Lie algebra structure of the full local-derivation space: in the exceptional one-dimensional non-central case it is an explicit semidirect product with an additional abelian irreducible ideal.
\end{abstract}

\noindent\textbf{Keywords:} conformal Galilei algebra; mass central extension; exotic central extension; derivation; local derivation.\\
\textbf{2020 Mathematics Subject Classification:} 17B40, 17B56, 17B68.

\section{Introduction}
The theory of local derivations studies linear mappings which agree, at each individual element, with a derivation that may depend on that element. The notion was introduced independently by Kadison and by Larson--Sourour \cite{Kadison,LarsonSourour}. If $L$ is a Lie algebra, a linear map $\Delta:L\to L$ is called a \emph{local derivation} if for every $x\in L$ there exists a derivation $D_x\in\Der L$ such that
\[
\Delta(x)=D_x(x).
\]
The central problem is to determine when this pointwise condition forces $\Delta$ itself to be a derivation. For Lie and Leibniz algebras the answer depends strongly on the structure of the radical, the module action of a Levi factor, and the presence of central extensions.

A fundamental rigidity result of Ayupov and Kudaybergenov states that every local derivation on a finite-dimensional semisimple Lie algebra over an algebraically closed field of characteristic zero is a derivation \cite{AK}. The corresponding two-point problem had already led Ayupov, Kudaybergenov and Rakhimov to the rigidity theorem for $2$-local derivations on finite-dimensional semisimple Lie algebras \cite{AKR}. Subsequent works of Ayupov, Kudaybergenov, Yusupov and their collaborators developed the local and $2$-local theory for a broad range of non-associative algebras. In particular, local and $2$-local derivations were investigated for $p$-filiform Leibniz algebras \cite{AKYp}, generalized Witt algebras \cite{AKYWitt}, locally simple Lie algebras \cite{AKYLocSimple}, and solvable Leibniz algebras \cite{AKHY}. For solvable Lie algebras, the maximal-rank case was treated by Kudaybergenov, Omirov and Kurbanbaev \cite{KOK}, while Yusupov and Yuldashev later obtained a local-derivation rigidity result for solvable Lie algebras with a filiform nilradical \cite{YYFil}. Together these results exhibit both local rigidity and the existence of pure local derivations, showing that non-semisimple structure alone does not determine the local--global behavior.

The local--global rigidity phenomenon has also been developed independently in several other Lie-theoretic directions. Yu and Chen proved that every local derivation on a standard Borel subalgebra of a finite-dimensional simple Lie algebra is a derivation \cite{YuChenBorel}. Chen, Zhao and Zhao established the analogous rigidity for Witt algebras and higher-rank centerless generalized Virasoro algebras \cite{ChenZhaoZhao}. Closely related two-local results are known for Schr\"odinger-type Lie algebras: Wang and Tang treated the $(2+1)$-dimensional Schr\"odinger algebra \cite{WangTang}, while Yao and Yu proved that every $2$-local derivation of the $n$-th Schr\"odinger algebra is a derivation \cite{YaoYu}. These results provide useful comparison points for the conformal Galilei family, where semidirect-product structure and central cocycles interact simultaneously.

A number of related works of Yusupov and coauthors further illustrate this dichotomy. Adashev and Yusupov first described local derivations of naturally graded quasi-filiform Leibniz algebras \cite{AdYQF} and later showed that direct sums of null-filiform Leibniz algebras generally admit local derivations which are not derivations \cite{AdYNull}. In the associative setting, Abdurasulov, Ayupov and Yusupov described local and $2$-local derivations on null-filiform, filiform and naturally graded quasi-filiform associative algebras and obtained further families with proper local derivations \cite{AAYAssoc}. The local-type program has also been developed for perfect Lie algebras, where Alauadinov, Normatov and Yusupov studied $2$-local derivations \cite{ANYLm}, and for Lie superalgebras, where Reymbaeva, Vaisova and Yusupov obtained local super-derivation results for the $n$-th super Schr\"odinger algebras \cite{RVY}. Related generalized local mappings include local and $2$-local anti-derivations on solvable Lie algebras studied by Ayupov, Atajonov and Yusupov \cite{AAtYAnti}. More recently, Ayupov and Yusupov studied local mappings on naturally graded quasi-filiform Leibniz algebras, proving in particular that the space of local derivations is closed under the commutator bracket and hence forms a finite-dimensional Lie algebra \cite{AYQF}; complementary local-automorphism phenomena for $p$-filiform Leibniz algebras were investigated in \cite{YAutp}. This structural point is especially relevant for the final part of the present paper, where the Lie algebra structure of $\LocDer$ is determined explicitly.

The Lie algebra property of local derivations is part of a broader structural program. Hao and Chen recently introduced pointwise invariant sets as a common framework for local and almost inner derivations and established general results concerning the structure of $\LocDer$ and its relation to derivation spaces \cite{HC}. Their viewpoint emphasizes that local derivations should be studied not only as individual operators but also through the algebraic structure of the whole local-derivation space. In the present conformal Galilei setting, this viewpoint is realized concretely: even in the exceptional non-central case, the additional pure-local component turns out to be an abelian ideal.

The conformal Galilei algebras provide a particularly useful family in which to compare these phenomena. They are non-semisimple Lie algebras depending on a positive half-integer $\ell$ and the spatial dimension $d$. Algebraically, the non-centrally extended algebra is
\[
\g_\ell(d)=\bigl(\mathfrak{sl}_2(\C)\oplus\mathfrak{so}_d(\C)\bigr)\ltimes V_{\ell,d},
\]
where $V_{\ell,d}$ is an abelian module obtained from the $(2\ell+1)$-dimensional irreducible $\mathfrak{sl}_2$-module together with the natural spatial module. Besides the algebra without central extension, two distinguished one-dimensional central extensions arise naturally: the mass extension for $\ell\in\mathbb N-\frac12$ and the exotic extension for integer $\ell$ in spatial dimension $d=2$. Their radicals become Heisenberg-type ideals, and the central brackets pair complementary $\mathfrak{sl}_2$-levels.

The algebraic structure of conformal Galilei symmetries has been studied from several complementary viewpoints. Andrzejewski, Gonera and Ma\'{s}lanka constructed dynamical realizations of nonrelativistic conformal groups \cite{Andrzejewski}, and Galajinsky and Masterov developed a dynamical realization of the $\ell$-conformal Galilei algebra in terms of oscillator systems \cite{GalajinskyMasterov}. For the central extensions, Aizawa and Isaac studied irreducible representations of the exotic conformal Galilei algebra \cite{AizawaIsaac}, while Aizawa, Isaac and Kimura analyzed highest-weight representations and Kac determinants for conformal Galilei algebras with central extension \cite{AizawaIsaacKimura}. More recently, Chen, Li and Yu determined derivations and $\frac12$-derivations of conformal Galilei algebras and studied the associated transposed Poisson structures \cite{ChenLiYu}. These works underline that the central extensions are structurally intrinsic objects rather than auxiliary modifications, and they motivate examining whether the same cocycles affect local derivation rigidity.

Local derivations on the closely related Schr\"odinger Lie algebras have been investigated in several papers by Alauadinov and Yusupov. The low-dimensional cases were treated in \cite{AYSch}, while the higher-dimensional Schr\"odinger algebra in $(n+1)$-dimensional space-time was studied in \cite{AYSchn}, where the local-to-global rigidity was obtained uniformly for $n\ge3$. For the conformal Galilei family itself, Alauadinov and Yusupov studied the one-dimensional mass central extension in \cite{AYCG}, proving rigidity away from the two low parameters $\ell=\frac12$ and $\ell=\frac32$. The corresponding $2$-local problem for this one-dimensional conformal Galilei algebra was considered by Zhao and Cheng \cite{ZhaoChengCG}. In the present paper the two missing local-derivation cases are computed directly, so the one-string mass result becomes complete for every admissible half-integer $\ell$. This completed picture suggests that the Heisenberg central pairing has a strong rigidity effect and motivates a simultaneous comparison with the algebra before central extension and with the exotic central extension.

The purpose of the present paper is therefore to study, within one framework, the local derivations of
\begin{enumerate}[label=\textup{(\arabic*)}]
    \item the conformal Galilei algebra $\g_\ell(d)$ without central extension;
    \item the mass central extension $\widehat{\g}^{\,M}_\ell(d)$;
    \item the exotic central extension $\widehat{\g}^{\,\Theta}_\ell(2)$.
\end{enumerate}
The comparison shows that the central cocycle changes the local-derivation problem in an essential way. In the non-central algebra the radical is abelian, and in one spatial dimension a genuine pure-local component may survive. For the mass and exotic extensions, the central pairing between complementary levels yields additional compatibility relations and restores rigidity.

Our method combines three ingredients. First, we determine the derivation algebras of the three families and isolate their outer derivations, giving an explicit pointwise form for local derivations. Second, in the central extensions we normalize on the $\mathfrak{sl}_2$-part and use carefully chosen sums of complementary weight vectors. In the mass case, paired spatial vectors force the remaining same-level coefficients to come from a single rotation operator; in the exotic case, the two weight strings and the central cocycle produce the necessary recurrence relations. Third, in the non-central case the remaining problem is reduced to algebraic reflexivity and local coboundaries, which isolates the exceptional pure-local module.

The principal result is the following classification.

\begin{theorem}\label{thm:main-intro}
Let $\ell\in\frac12\mathbb N$ and $d\in\mathbb N$.
\begin{enumerate}[label=\textup{(\roman*)}]
\item For the algebra without central extension, every local derivation is a derivation if $d\ge2$. If $d=1$, the same is true when $2\ell$ is even or $\ell=\frac12$. For odd $2\ell\ge3$,
\[
\LocDer\g_\ell(1)=\Der\g_\ell(1)\oplus\mathcal P_{2\ell},
\qquad \dim\mathcal P_{2\ell}=2\ell-1.
\]
Moreover, $\mathcal P_{2\ell}$ is an abelian irreducible $\mathfrak{sl}_2$-module, isomorphic to the module of highest weight $2\ell-2$.
\item If $\ell\in\mathbb N-\frac12$, then every local derivation on the mass central extension $\widehat{\g}^{\,M}_\ell(d)$ is a derivation for every $d$.
\item If $d=2$ and $\ell\in\mathbb N$, then every local derivation on the exotic central extension $\widehat{\g}^{\,\Theta}_\ell(2)$ is a derivation.
\end{enumerate}
\end{theorem}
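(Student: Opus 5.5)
We plan to prove the three parts separately, but all along one common scheme. Let $\mathfrak s=\slc\oplus\sod$ denote the Levi factor. We begin by computing $\Der$ for each family, in the form $\Der=\Inn\oplus\mathcal O$, with an explicit list of outer derivations. In the non-central case these are the scalar derivation on $V_{\ell,d}$ and the $\mathfrak s$-equivariant endomorphisms of $V_{\ell,d}$. For the central extensions we add the grading-type derivation that scales the radical and the center compatibly.

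Given a local derivation $\Delta$, we first restrict to $\mathfrak s$. Since $\mathfrak s$ is semisimple, the theorem of \cite{AK} applied to the $\mathfrak s$-component of $\Delta|_{\mathfrak s}$ lets us subtract an inner derivation. After this we may assume that $\Delta(\mathfrak s)\subseteq \mathrm{rad}$. A further weight argument, evaluating $\Delta$ at $h$ and at $e+f$ and using that $D_x(x)$ lies in $[\g,x]+\mathcal O(x)$, should then force $\Delta|_{\mathfrak s}=0$ modulo inner derivations $\ad v$ with $v$ in the radical.

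Next we pass to the radical. Let $x_k\otimes e_i$ be weight vectors, with $k=-\ell,\dots,\ell$ and $i=1,\dots,d$. We test $\Delta$ on $x_k\otimes e_i$ and on mixed elements $s+v$, with $s\in\mathfrak s$ and $v$ in the radical. Comparing coefficients shows that $\Delta|_{\mathrm{rad}}$ commutes with $h$. Comparing further shows that $\Delta|_{\mathrm{rad}}$ is determined levelwise by scalars $\lambda_k$ together with same-level operators $A_k\in\mathfrak{gl}_d$.

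For $d\ge2$ we use paired spatial vectors $x_k\otimes e_i+x_{k'}\otimes e_j$. Locality forces all $A_k$ to come from one rotation in $\sod$, and forces the $\lambda_k$ to be affine in $k$, which is exactly the derivation form. In the mass and exotic cases we work with complementary sums $x_k+x_{-k}$. The central bracket $[x_k,x_{-k}]\propto c$ gives the extra relation $\lambda_k+\lambda_{-k}=\lambda_c$. For the exotic case we add the two-string recurrences coming from $\Theta$, and the combination yields rigidity. For the mass case with $d=1$ and $\ell\in\{\tfrac12,\tfrac32\}$, where \cite{AYCG} does not apply, the small dimension allows a direct coordinate computation.

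The main obstacle is the non-central case with $d=1$ and $2\ell$ odd. There $\g_\ell(1)=\slc\ltimes V_{2\ell}$ has abelian radical, and the problem reduces to algebraic reflexivity: we must decide which linear maps $\Delta$ with $\Delta|_{\slc}=0$ and $\Delta(V)\subseteq V$ agree pointwise with $\ad(\slc)+\C\,\mathrm{id}_V$ on $V$. The leftover maps live in $\operatorname{End}(V)\cong\bigoplus_j V_{2j}$. Pointwise membership of $\Delta(v)$ in $\slc\cdot v+\C v$ should cut this down to the isotypic component $V_{2\ell-2}$.

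We then have to verify three things. First, that for odd $2\ell$ every element of that component is indeed local; here we would use that $V_{2\ell}$ is symplectic, so that $\Delta(v)$ is orthogonal to $v$ and a dimension count on the stabilizer applies. Second, that for even $2\ell$ the analogous component is not local. Third, that for $\ell=\tfrac12$ the component vanishes because $2\ell-1=0$. Finally, once the locality of $\mathcal P_{2\ell}$ is established, its $\slc$-irreducibility and the fact that it is an abelian ideal should follow from equivariance and from $\mathcal P_{2\ell}^2=0$ on $V$.
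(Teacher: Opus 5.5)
There is a genuine gap, and it is located exactly where the exceptional family lives. Write $m=2\ell$ and $\rho(x)=\ad x|_V$, and use the weight basis $u_0,\dots,u_m$ of $V$ with $[e,u_k]=ku_{k-1}$ and $[f,u_k]=(m-k)u_{k+1}$. Your claim that evaluating $\Delta$ at $h$ and at $e+f$ forces $\Delta|_{\mathfrak s}=0$ modulo $\ad v$, $v\in V$, is false. Once the $\mathfrak s$-component has been removed, locality at $x\in\slc$ says only that $\Delta(x)\in\rho(x)V$. For odd $m\ge3$ this range-local condition does not force a coboundary. Consider the map
\[
\Psi_0(e)=u_0,\qquad \Psi_0(h)=-2u_1,\qquad \Psi_0(f)=-u_2,\qquad \Psi_0(V)=0 .
\]
It satisfies the condition for three reasons:
\begin{enumerate}[label=\textup{(\alph*)}]
\item every non-zero semisimple element acts invertibly on $V$, since there is no zero weight when $m$ is odd;
\item for the nilpotent elements $x(t)=e-th-t^2f$, the functional $\varphi_t(u_j)=(-t)^{m-j}$ spans the annihilator of $\rho(x(t))V$, and it kills $\Psi_0(x(t))=u_0+2tu_1+t^2u_2$;
\item $\Psi_0(f)\in\rho(f)V$.
\end{enumerate}
Now write $\Psi_0(x)=[w_x,x]$. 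Because $V$ is abelian, the inner derivation $\ad w_x$ realizes $\Psi_0$ at every point $x+v$, so $\Psi_0$ is local. Yet $\Psi_0([h,e])=2u_0$, whereas $[\Psi_0(h),e]+[h,\Psi_0(e)]=(m+2)u_0$. So $\Psi_0$ is a local derivation that is not a derivation, and no test point in $\mathfrak s$ can normalize it away.

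The same is true in the mass extension, where such maps are removed only by mixed test vectors $x+v$: there the central pairing ties a radical coefficient of the pointwise inner element to a central component. For $d\ge2$ in the non-central case you also need the rotations. At $a+ts_{ij}$ the operators $\rho_m(a)\pm ctI$ fill out $\rho_m(\slc)+\C I$, and range-local maps on that space are coboundaries.

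Consequently, your search for $\mathcal P_{2\ell}$ inside $\operatorname{End}(V)$ is aimed at the wrong place. The module part is rigid: $\rho_m(\slc)+\C I$ is algebraically reflexive. Along the rational normal curve $v(t)=\exp(tf)u_0$, the spaces $(\rho_m(\slc)+\C I)v(t)$ are the tangent planes $\operatorname{span}\{v(t),v'(t)\}$, and only operators in $\rho_m(\slc)+\C I$ preserve all of them. So pointwise membership cuts $\operatorname{End}(V)$ down to $V_2\oplus V_0$, i.e.\ to derivations. Parity confirms this: $\operatorname{End}(V_{2\ell})\cong\bigoplus_{j=0}^{2\ell}V_{2j}$ has only even highest weights, so for odd $2\ell$ it contains no summand of highest weight $2\ell-2$ at all.

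The correct home is
\[
\operatorname{Hom}(\slc,V)\cong V_2\otimes V_{2\ell}\cong V_{2\ell+2}\oplus V_{2\ell}\oplus V_{2\ell-2},
\]
i.e.\ maps that kill $V$ and send $\slc$ into $V$.
\begin{enumerate}[label=\textup{(\alph*)}]
\item $V_{2\ell}$ consists of the coboundaries.
\item $V_{2\ell+2}$ is excluded by its highest weight vector $f\mapsto u_0$.
\item $V_{2\ell-2}$, with highest weight vector $\Psi_0$, is range-local for odd $2\ell$. For even $2\ell$ it fails at $e+f$: the image of $\rho(e+f)$ then has codimension one and misses $\Psi_0(e+f)=u_0-u_2$.
\end{enumerate}
So the parity mechanism is the absence or presence of a zero weight, not the symplectic form. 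This location is also what makes $\mathcal P_{2\ell}$ abelian: $F\circ G=0$ because both maps kill $V$ and take values in $V$, which would fail for endomorphisms of $V$.

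Two smaller points. For $d=2$, $\mathfrak s$ is only reductive, so the semisimple result does not apply directly. Also, both central extensions with $d=2$ carry a further outer derivation $s\mapsto M$ (resp.\ $s\mapsto z$), which your list omits. Your argument must therefore allow $\Delta(s)$ to have a central component.
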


Consequently, the two central extensions are locally rigid, whereas the non-central algebra possesses a precisely determined exceptional family in spatial dimension one. For odd $2\ell\ge3$, passing from $\g_\ell(1)$ to its mass central extension eliminates a $(2\ell-1)$-dimensional space of pure local derivations. We also determine the Lie algebra structure of the full local-derivation space; in the exceptional case the pure-local component is an abelian ideal. Thus the paper complements the previously known rigid examples by giving, inside a single natural family, both a complete rigidity theorem and an explicit representation-theoretic description of the obstruction to rigidity.

The paper is organized as follows. Section~\ref{sec:three} introduces the three conformal Galilei Lie algebras and fixes notation. Section~\ref{sec:derivations} determines their derivation algebras. Section~\ref{sec:local} contains the classification of local derivations, including the algebraic-reflexivity argument for the non-central case and the rigidity proofs for the mass and exotic extensions. The final structural section describes dimensions, Lie brackets, and the Levi decomposition of the local-derivation algebra in the exceptional case.

\section{The three conformal Galilei Lie algebras}\label{sec:three}
In this section we fix the notation and recall the defining relations of the three Lie algebras considered throughout the paper. We begin with the algebra without a central extension and then introduce the mass and exotic one-dimensional central extensions, emphasizing the structural features that will later control their derivations and local derivations.

\subsection{The algebra without central extension}
We first describe the basic, unextended conformal Galilei algebra. Its semidirect-product structure provides the common framework from which both central extensions will be obtained and also makes the role of the abelian radical transparent.
Let $d\in\mathbb N$ and $\ell\in\frac12\mathbb N$. The conformal Galilei algebra $\g_\ell(d)$ over $\C$ has basis
\[
\{D,H,C,M_{ij}=-M_{ji},P_i^{(n)}\mid 1\le i,j\le d,\ 0\le n\le 2\ell\}.
\]
The multiplication table is given by the following non-zero brackets:
\begin{equation}\label{eq:noncentral-table}
\left\{
\begin{aligned}
[D,H]&=2H, \qquad [D,C]=-2C, \qquad [C,H]=D,\\[1mm]
[M_{ij},M_{kl}]&=-\delta_{ik}M_{jl}-\delta_{jl}M_{ik}
+\delta_{il}M_{jk}+\delta_{jk}M_{il},\\[1mm]
[H,P_i^{(n)}]&=-nP_i^{(n-1)}, \qquad
[D,P_i^{(n)}]=2(\ell-n)P_i^{(n)},\\[1mm]
[C,P_i^{(n)}]&=(2\ell-n)P_i^{(n+1)},\\[1mm]
[M_{ij},P_k^{(n)}]&=-\delta_{ik}P_j^{(n)}+\delta_{jk}P_i^{(n)}.
\end{aligned}
\right.
\end{equation}
Here $1\le i,j,k,l\le d$, $0\le n\le 2\ell$, and $\delta_{ij}$ denotes the Kronecker delta. As usual,
\[
P_i^{(-1)}=P_i^{(2\ell+1)}=0,
\]
and all brackets not obtained from \eqref{eq:noncentral-table} by skew-symmetry are zero. Put
\[
V_{\ell,d}=\operatorname{span}_{\C}\{P_i^{(n)}\}.
\]
Then $V_{\ell,d}$ is an abelian ideal and
\[
\g_\ell(d)=\bigl(\slc\oplus\sod\bigr)\ltimes V_{\ell,d},
\]
with the standard interpretation for $d=1$ and $d=2$.

\subsection{Mass central extension}
We next introduce the mass central extension, which replaces the abelian radical by a Heisenberg-type ideal through a nontrivial central pairing of complementary levels. This modification will be responsible for a substantial increase in rigidity in the local-derivation problem.

Assume $\ell\in\mathbb N-\frac12$. The mass extension
$\widehat{\g}^{\,M}_\ell(d)=\g_\ell(d)\oplus \C M$
has the additional central bracket
\begin{equation}\label{eq:mass}
[P_i^{(m)},P_j^{(n)}]
=\delta_{ij}\delta_{m+n,2\ell}I_m M,
\qquad
I_m=(-1)^{m+\ell+\frac12}(2\ell-m)!\,m!,
\end{equation}
where $M$ is central.

\subsection{Exotic central extension}
The second central extension considered here is the exotic extension, which exists in two spatial dimensions for integral $\ell$. Its alternating spatial cocycle is different from the mass cocycle, but it again couples complementary weight spaces through a central element.

Assume $d=2$ and $\ell\in\mathbb N$. The exotic extension
$\widehat{\g}^{\,\Theta}_\ell(2)=\g_\ell(2)\oplus\C\Theta$
has the additional bracket
\begin{equation}\label{eq:exotic}
[P_i^{(m)},P_j^{(n)}]
=\varepsilon_{ij}\delta_{m+n,2\ell}J_m\Theta,
\qquad
J_m=(-1)^m(2\ell-m)!\,m!,
\end{equation}
where $\Theta$ is central and $\varepsilon_{12}=1$.

For the direct calculations below, introduce the complex basis
\[
h=D,\qquad e=-H,\qquad f=C,\qquad s=-\mathrm i M_{12},
\]
\[
p_n=P_1^{(n)}+\mathrm iP_2^{(n)},\qquad
q_n=P_1^{(n)}-\mathrm iP_2^{(n)},\qquad
z=-2\mathrm i\Theta.
\]
Then
\begin{align}
[h,e]&=2e,& [h,f]&=-2f,& [e,f]&=h,\label{eq:ex1}\\
[h,p_n]&=2(\ell-n)p_n,& [e,p_n]&=np_{n-1},& [f,p_n]&=(2\ell-n)p_{n+1},\\
[h,q_n]&=2(\ell-n)q_n,& [e,q_n]&=nq_{n-1},& [f,q_n]&=(2\ell-n)q_{n+1},\\
[s,p_n]&=p_n,& [s,q_n]&=-q_n,&
[p_m,q_n]&=\delta_{m+n,2\ell}(-1)^m(2\ell-m)!m!\,z.\label{eq:ex4}
\end{align}
Thus
\[
\widehat{\g}^{\,\Theta}_\ell(2)
=(\slc\oplus\mathfrak{so}_2)\ltimes \mathfrak h_{\ell},
\]
where $\mathfrak h_\ell$ is the Heisenberg ideal generated by $p_n,q_n,z$.

\section{Derivations of the three conformal Galilei Lie algebras}\label{sec:derivations}
In this section we determine the derivation algebras of all three Lie algebras introduced above.  The results are arranged as separate subsections so that the effect of the central cocycle is visible at once.

\subsection{The algebra without central extension}\label{subsec:der-noncentral}
We first determine the ordinary derivations of the algebra with abelian radical. The calculation separates the inner derivations from a single natural outer scaling on the radical, and this decomposition will later serve as the starting point for the local analysis.

Define the linear map $\delta_0:\g_\ell(d)\to\g_\ell(d)$ by
\begin{equation}\label{eq:delta0}
\delta_0(D)=\delta_0(H)=\delta_0(C)=\delta_0(M_{ij})=0,
\qquad
\delta_0(P_i^{(n)})=P_i^{(n)}.
\end{equation}

\begin{lemma}\label{lem:delta0}
The map $\delta_0$ is a derivation of $\g_\ell(d)$.
\end{lemma}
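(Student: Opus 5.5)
We verify the Leibniz rule $\delta_0([x,y])=[\delta_0(x),y]+[x,\delta_0(y)]$ on pairs of basis elements. Both sides are bilinear in $(x,y)$, so basis elements suffice. The idea is that $\delta_0$ is the projection onto $V_{\ell,d}$ along the Levi factor $\mathfrak s=\slc\oplus\sod$, that is, the grading operator for the decomposition $\g_\ell(d)=\mathfrak s\oplus V_{\ell,d}$ with $\mathfrak s$ in degree $0$ and $V_{\ell,d}$ in degree $1$. The brackets in \eqref{eq:noncentral-table} respect this $\mathbb Z$-grading:
\[
[\mathfrak s,\mathfrak s]\subseteq\mathfrak s,\qquad
[\mathfrak s,V_{\ell,d}]\subseteq V_{\ell,d},\qquad
[V_{\ell,d},V_{\ell,d}]=0 .
\]
The degree operator of any Lie algebra grading is a derivation, and that is the whole argument.

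Concretely, I will check three cases.
\begin{enumerate}[label=\textup{(\alph*)}]
\item If $x,y\in\{D,H,C,M_{ij}\}$, then $[x,y]\in\mathfrak s$, so both sides vanish.
\item If $x\in\mathfrak s$ and $y=P_k^{(n)}$, then $[x,y]$ is a multiple of some $P$, by the table (the $H$, $D$, $C$ and $M_{ij}$ brackets with $P_k^{(n)}$). The left side therefore equals $[x,y]$. The right side is $[0,y]+[x,y]=[x,y]$.
\item If $x=P_i^{(m)}$ and $y=P_j^{(n)}$, then $[x,y]=0$ because $V_{\ell,d}$ is abelian. The right side is $[x,y]+[x,y]=0$.
\end{enumerate}
Skew-symmetry covers the remaining ordering $x\in V_{\ell,d}$, $y\in\mathfrak s$.

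There is no real obstacle here. The only point to note is that the argument relies on the absence of central extension. In the mass or exotic case, step (c) would give $0\ne2[x,y]$, so $\delta_0$ would have to be modified to act by $2$ on the central element.
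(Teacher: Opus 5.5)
Your proof is correct and is essentially the paper's argument: the paper also checks the derivation identity in the same three cases. On the reductive part it is trivial because $\delta_0$ vanishes there; on mixed pairs it follows from the semidirect-product action; on pairs from $V_{\ell,d}$ it holds because $V_{\ell,d}$ is abelian. Your grading-operator formulation is a compact packaging of these cases, with one small terminological slip: for $d=2$ the subalgebra $\slc\oplus\mathfrak{so}_2$ is reductive rather than a Levi factor. This is harmless, since only the decomposition $\mathfrak s\oplus V_{\ell,d}$ is used.
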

\begin{proof}
The only point requiring verification is compatibility with the semidirect-product action. If $x\in\slc\oplus\sod$ and $v\in V_{\ell,d}$, then
\[
\delta_0([x,v])=[x,v]=[\delta_0(x),v]+[x,\delta_0(v)].
\]
Since $V_{\ell,d}$ is abelian, the derivation identity is automatic on pairs from $V_{\ell,d}$, while it is trivial on the reductive part because $\delta_0$ vanishes there.
\end{proof}

\begin{theorem}\label{thm:der0}
For the conformal Galilei algebra without central extension,
\begin{equation}\label{eq:der0}
\Der\g_\ell(d)=\Inn\g_\ell(d)\oplus\C\delta_0.
\end{equation}
\end{theorem}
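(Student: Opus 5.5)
The plan is to normalize an arbitrary derivation $\Delta\in\Der\g_\ell(d)$ by inner derivations until it is visibly a multiple of $\delta_0$. Write $\mathfrak s=\slc\oplus\sod$ for the Levi factor and $V=V_{\ell,d}$ for the radical.

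First we use that $V$ is the radical, hence invariant under every derivation, since derivations preserve the solvable radical in characteristic zero. Consider the composite $\mathfrak s\to\g\to\g/V\cong\mathfrak s$ given by $x\mapsto \Delta(x)+V$. It is a derivation of the reductive algebra $\mathfrak s$. For $d\ge 3$ (and for the $\slc$ part in general) derivations of semisimple algebras are inner. When $d=2$, $\mathfrak{so}_2$ is abelian and one-dimensional, and we must check that the component $\mathfrak{so}_2\to\mathfrak{so}_2$ vanishes. This follows because $M_{12}$ acts on $V$ with nonzero eigenvalues $\pm\mathrm i$, and a derivation must preserve the relation $[M_{12},[M_{12},P]] = -P$ on $V$. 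Subtracting $\ad x_0$ for a suitable $x_0\in\mathfrak s$, we may assume $\Delta(\mathfrak s)\subseteq V$.

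Next, the restriction $\Delta|_{\mathfrak s}\colon\mathfrak s\to V$ is a $1$-cocycle with values in the $\mathfrak s$-module $V$. By Whitehead's first lemma, $H^1(\mathfrak s,V)=0$ for the semisimple part. For the $\mathfrak{so}_2$ or $d=1$ edge cases, $V$ carries no trivial $\mathfrak s$-isotypic component when $\ell>0$ or $d\ge2$. For $\ell=0,d=1$ the algebra is not defined, since $\ell\in\frac12\mathbb N$ is positive. The case $d=2$ uses the $\mathfrak{so}_2$ action directly: $M_{12}$ is invertible on $V$, so the cocycle condition with the centralizing $\mathfrak{so}_2$ forces coboundary form. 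Hence $\Delta|_{\mathfrak s}=\ad v_0$ for some $v_0\in V$, and subtracting this we get $\Delta(\mathfrak s)=0$.

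Then $\Delta$ is $\mathfrak s$-equivariant, and $\Delta(V)\subseteq V$. The module $V\cong V_{2\ell}\otimes \C^d$ is irreducible as a $\slc\oplus\sod$-module when $d\ne2$. For $d=2$ it splits into the two $\mathfrak{so}_2$-eigenspaces spanned by $p_n$ and $q_n$, which are non-isomorphic because they carry $\mathfrak{so}_2$-weights $\pm1$. In both cases Schur's lemma gives $\Delta|_V=c\,\mathrm{id}$ when $d\ne2$, and $\Delta|_V$ acting by scalars $c_1,c_2$ on the two eigenspaces when $d=2$. In the latter case the difference $(c_1-c_2)$ times the projection is $\tfrac{c_1-c_2}{2}\,\ad(-\mathrm iM_{12})$ restricted to $V$; since $M_{12}$ is central in $\mathfrak s$, this restriction is inner and vanishes on $\mathfrak s$. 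Subtracting it leaves $c\,\delta_0$.

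Finally, the sum is direct. Every inner derivation $\ad(x+v)$ has trace $0$ on $V$ in its $V\to V$ block. Indeed, $\ad v$ is zero on $V$, and $\ad x$ for $x\in\mathfrak s$ is traceless on $V$, since $\mathfrak s$ is perfect apart from $\mathfrak{so}_2$, which acts with opposite eigenvalues. On the other hand, $\delta_0$ has trace $\dim V\neq0$.

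The main obstacle is the bookkeeping in the low-dimensional cases $d=1,2$, where $\sod$ is not semisimple. The key extra ingredient there is the invertibility of $\ad M_{12}$ on $V$.
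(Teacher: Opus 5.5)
Your proposal is correct and follows essentially the same route as the paper: normalize $\Delta$ by inner derivations so that it vanishes on the reductive part, apply Schur's lemma to the resulting equivariant map on $V_{\ell,d}$, and for $d=2$ absorb the difference of the scalars on the $p$- and $q$-strings into $\ad s$. The paper gets $D(s)=as$ directly from the centralizer of $\slc$, rather than from your $(\ad M_{12})^2=-1$ and invertibility arguments, but this is a minor variation. One small correction: for $d=2$ the solvable radical is $\C M_{12}\oplus V_{\ell,d}$, not $V_{\ell,d}$, so the invariance of $V_{\ell,d}$ should be justified as in the paper by its being the nilradical (equivalently, the derived algebra of the radical), which is characteristic.
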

\begin{proof}
Put $L=\g_\ell(d)$ and $V=V_{\ell,d}$. The ideal $V$ is the nilradical of $L$, hence it is characteristic. Therefore every derivation preserves $V$ and induces a derivation of $L/V$.

For $d\neq2$, the quotient is semisimple (with the usual $d=1$ interpretation). After subtracting an inner derivation, we may assume that the derivation vanishes on the Levi factor. The remaining restriction to $V$ commutes with the $\slc\oplus\sod$-action. The module $V$ is irreducible, and therefore Schur's lemma yields a common scalar action on $V$. This scalar action is precisely a multiple of $\delta_0$.

For $d=2$, put $s=-\mathrm iM_{12}$ and write
\[
V=V^+\oplus V^-,\qquad
V^+=\operatorname{span}\{p_0,\ldots,p_{2\ell}\},\qquad
V^-=\operatorname{span}\{q_0,\ldots,q_{2\ell}\}.
\]
After subtracting an inner derivation, we may assume first that the derivation vanishes on $\mathfrak{sl}_2$. Since $V$ has no non-zero $\mathfrak{sl}_2$-invariant vector, the relation $[\mathfrak{sl}_2,s]=0$ implies
\[
D(s)=a s.
\]
The restriction of $D$ to $V$ is $\mathfrak{sl}_2$-equivariant, and therefore
\[
D(p_n)=\alpha p_n+\beta q_n,\qquad
D(q_n)=\gamma p_n+\delta q_n.
\]
Applying $D$ to $[s,p_n]=p_n$ gives
\[
\alpha p_n+\beta q_n=(a+\alpha)p_n-\beta q_n,
\]
so $a=\beta=0$. Similarly, $[s,q_n]=-q_n$ gives $\gamma=0$. Thus the two strings are scaled independently. Their common scalar part is a multiple of $\delta_0$, while their difference is a multiple of the inner derivation $\ad s$, because
\[
\ad s(p_n)=p_n,\qquad \ad s(q_n)=-q_n.
\]
Hence, modulo inner derivations, only the common scalar survives, proving \eqref{eq:der0} also for $d=2$.
\end{proof}

\subsection{The mass central extension}\label{subsec:der-mass}
We now pass to the mass extension and determine how the central cocycle changes the derivation algebra. Besides modifying the weight of the scalar outer derivation on the center, the two-dimensional case produces an additional central shear.

Define $\delta_M:\widehat{\g}^{\,M}_\ell(d)\to\widehat{\g}^{\,M}_\ell(d)$ by
\begin{equation}\label{eq:deltaM}
\delta_M(P_i^{(n)})=P_i^{(n)},\qquad
\delta_M(M)=2M,
\end{equation}
and let $\delta_M$ vanish on $D,H,C,M_{ij}$.

\begin{lemma}\label{lem:deltaM}
The map $\delta_M$ is a derivation of $\widehat{\g}^{\,M}_\ell(d)$.
\end{lemma}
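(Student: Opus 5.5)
We verify the derivation identity $\delta_M([x,y])=[\delta_M(x),y]+[x,\delta_M(y)]$ on pairs of basis elements, split by type. Write $L_0=\operatorname{span}\{D,H,C,M_{ij}\}$ for the reductive part, $V=V_{\ell,d}$, and note that $\delta_M$ is the grading operator assigning degree $0$ to $L_0$, degree $1$ to $V$, and degree $2$ to $\C M$. The plan is to show that the brackets of $\widehat{\g}^{\,M}_\ell(d)$ are homogeneous for this $\mathbb Z$-grading, i.e.\ $[L_0,L_0]\subseteq L_0$, $[L_0,V]\subseteq V$, $[V,V]\subseteq\C M$, and that $M$ is central. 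Once this holds, the identity is automatic: if $x,y$ are homogeneous of degrees $a,b$, then $[x,y]$ has degree $a+b$, so
\[
\delta_M([x,y])=(a+b)[x,y]=[ax,y]+[x,by]=[\delta_M(x),y]+[x,\delta_M(y)].
\]
Linearity then extends this to all of $\widehat{\g}^{\,M}_\ell(d)$.

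The concrete steps are as follows. First, for $x,y\in L_0$, both sides vanish, since the brackets of $\slc\oplus\sod$ stay in $L_0$ by \eqref{eq:noncentral-table}. Second, for $x\in L_0$ and $v=P_i^{(n)}$, the bracket $[x,v]$ lies in $V$ by \eqref{eq:noncentral-table}. The verification is then the same as in Lemma~\ref{lem:delta0}: both sides equal $[x,v]$. Third, for $v=P_i^{(m)}$ and $w=P_j^{(n)}$, \eqref{eq:mass} gives $[v,w]=\delta_{ij}\delta_{m+n,2\ell}I_mM$. The left side is therefore $2\delta_{ij}\delta_{m+n,2\ell}I_mM$, and the right side is $[v,w]+[v,w]$, which agrees. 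Finally, any bracket involving $M$ is zero, and $\delta_M(M)=2M$ is again central, so both sides vanish.

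No step is a genuine obstacle. The only point needing care is the third case, where one checks that the weight $2$ on $M$ is exactly what the Heisenberg pairing forces. The mixed case is unchanged from Lemma~\ref{lem:delta0}, because the extension only alters brackets inside $V$.
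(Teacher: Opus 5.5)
Your proof is correct and follows essentially the same route as the paper, which checks the non-central brackets as in Lemma~\ref{lem:delta0} and then verifies that the weight $2$ on $M$ matches the mass cocycle on $[P_i^{(m)},P_j^{(n)}]$. Describing $\delta_M$ as the degree operator of the $\mathbb Z$-grading $L_0\oplus V\oplus\C M$ is just a cleaner packaging of that same case check.
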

\begin{proof}
For the non-central brackets the verification is the same as in Lemma~\ref{lem:delta0}. For the cocycle \eqref{eq:mass},
\[
\delta_M([P_i^{(m)},P_j^{(n)}])=2[P_i^{(m)},P_j^{(n)}],
\]
whereas
\[
[\delta_M(P_i^{(m)}),P_j^{(n)}]+[P_i^{(m)},\delta_M(P_j^{(n)})]
=2[P_i^{(m)},P_j^{(n)}].
\]
Thus the coefficient $2$ on the central element is forced by the derivation identity.
\end{proof}

When $d=2$, put $s=-\mathrm iM_{12}$ and define
\begin{equation}\label{eq:etaM}
\eta_M(s)=M,
\qquad
\eta_M(x)=0
\quad\text{for every other basis element }x.
\end{equation}
Since $M$ is central, $\eta_M$ is a derivation.

\begin{theorem}\label{thm:massder}
For $d\neq2$,
\begin{equation}\label{eq:massder1}
\Der\widehat{\g}^{\,M}_\ell(d)
=\Inn\widehat{\g}^{\,M}_\ell(d)\oplus\C\delta_M.
\end{equation}
For $d=2$,
\begin{equation}\label{eq:massder2}
\Der\widehat{\g}^{\,M}_\ell(2)
=\Inn\widehat{\g}^{\,M}_\ell(2)
\oplus\C\delta_M\oplus\C\eta_M.
\end{equation}
\end{theorem}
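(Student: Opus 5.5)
Let $L=\widehat{\g}^{\,M}_\ell(d)$ and $N=V_{\ell,d}\oplus\C M$. The plan follows the proof of Theorem~\ref{thm:der0}: normalize a derivation $\Delta$ on the Levi part, analyse it on the Heisenberg radical with Schur's lemma, and finally use the cocycle \eqref{eq:mass}.

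\textbf{Step 1: $\Delta$ preserves $N$ and $\C M$.} The ideal $N$ is the nilradical, hence characteristic. The center is $Z(L)=\C M$, since $V$ has no invariant vectors under $\slc$. The center is also characteristic, so $\Delta(M)=\mu M$ for some $\mu\in\C$.

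\textbf{Step 2: normalize on the Levi part.} First assume $d\ne2$, so that $\mathfrak s=\slc\oplus\sod$ is semisimple. Since $\Delta$ preserves $N$, the map $x\mapsto \Delta(x)\bmod N$ is a derivation of $L/N\cong\mathfrak s$, hence inner. After subtracting an inner derivation from $\mathfrak s$, we get $\Delta(\mathfrak s)\subseteq N$. The map $\Delta|_{\mathfrak s}:\mathfrak s\to N$ is then a $1$-cocycle with values in a finite-dimensional module. By Whitehead's lemma it is of the form $\ad n$ for some $n\in N$. Subtracting $\ad n$, we may assume $\Delta(\mathfrak s)=0$.

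\textbf{Step 3: $\Delta$ on $V$ for $d\ne2$.} Now $\Delta|_N$ commutes with $\ad\mathfrak s$. Since $M$ is $\mathfrak s$-invariant and $V$ contains no trivial submodule, $\Delta(V)\subseteq V$. As $V$ is irreducible, Schur's lemma gives $\Delta|_V=\lambda\,\mathrm{id}$. Apply $\Delta$ to \eqref{eq:mass} with $m+n=2\ell$; here $I_m\ne0$. This gives $\mu=2\lambda$. Hence $\Delta=\lambda\delta_M$, which proves \eqref{eq:massder1}.

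The directness of the sum needs a separate check: $\delta_M$ must not be inner. Any $\ad x$ is traceless on $V$, or, more directly, $\ad x$ vanishes on the center because $M\notin[L,L]$ would have to be ruled out. The correct argument is that $\ad x(M)=0$ for every $x$, while $\delta_M(M)=2M\ne0$.

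\textbf{Step 4: the case $d=2$.} Here $\mathfrak s=\slc\oplus\C s$ is only reductive, and this is the main obstacle. I would normalize on $\slc$ only, using Steps 1–2 for $\slc$: Whitehead's lemma for $\slc$ lets us assume $\Delta(\slc)=0$.

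Then $\Delta(s)$ commutes with $\slc$, so $\Delta(s)\in C_L(\slc)=\C s\oplus\C M$. Write $\Delta(s)=as+bM$.

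As in Theorem~\ref{thm:der0}, $\slc$-equivariance on $V=V^+\oplus V^-$ (two copies of the same irreducible module) gives
\[
\Delta(p_n)=\alpha p_n+\beta q_n+c_nM,\qquad \Delta(q_n)=\gamma p_n+\delta q_n+c_n'M.
\]
Equivariance forces $c_n=c_n'=0$, since $V$ has no trivial quotient. Applying $\Delta$ to $[s,p_n]=p_n$ and $[s,q_n]=-q_n$ (the central term $bM$ brackets trivially) yields $a=\beta=\gamma=0$.

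Next, the mass cocycle in the basis $p,q$ pairs $p_m$ with $q_{2\ell-m}$ and gives $[p_m,p_n]=0$. Applying $\Delta$ to this pairing gives $\mu=\alpha+\delta$.

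Finally, $\frac{\alpha-\delta}2\ad s$ absorbs the difference of the two scalars, $\frac{\alpha+\delta}2\delta_M$ absorbs the common part, and $b\,\eta_M$ absorbs $\Delta(s)$. This gives \eqref{eq:massder2}.

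For directness in this case: an inner derivation vanishing on $\slc$ and on $M$ that sends $s\mapsto M$ would need $[x,s]=M$. This is impossible, since $M\notin[s,L]$.

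The only delicate points I anticipate are verifying the pairing structure of \eqref{eq:mass} in the complex basis for $d=2$, and checking that Whitehead's lemma applies to the non-semisimple module $N$. The latter is fine, since Whitehead's lemma holds for arbitrary finite-dimensional modules.
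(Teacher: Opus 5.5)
Your proposal is correct and follows essentially the same route as the paper. Both arguments normalize $\Delta$ on the Levi factor via Whitehead (on $\slc$ alone when $d=2$), use Schur's lemma or $\slc$-equivariance on the radical with the central components vanishing because $V$ has no trivial quotient, use $[s,p_n]=p_n$ and $[s,q_n]=-q_n$ to force $a=\beta=\gamma=0$, and use the mass pairing to fix $\Delta(M)$. The opening half of your directness remark in Step~3 is muddled, and in Step~4 the normalization is quickest if you apply Whitehead directly to $\Delta|_{\slc}$ as a cocycle with values in $L$; the argument you settle on, $\ad x(M)=0\neq 2M=\delta_M(M)$, is the right one.
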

\begin{proof}
Let $L=\widehat{\g}^{\,M}_\ell(d)$ and
\[
R=V_{\ell,d}\oplus\C M.
\]
Then $R$ is the nilradical of $L$, hence $R$ is characteristic. Moreover $Z(L)=\C M$.

First assume $d\neq2$. The quotient $L/R$ is semisimple. By subtracting an inner derivation and using the usual Whitehead-lemma reduction, we may suppose that a given $D\in\Der L$ vanishes on the Levi factor. Then the induced map on $R/\C M\cong V_{\ell,d}$ commutes with the Levi action. Since $V_{\ell,d}$ is irreducible, Schur's lemma gives
\[
D(P_i^{(n)})=aP_i^{(n)}+\phi_i^{(n)}M.
\]
The central coefficients define an invariant linear functional on the non-trivial irreducible module $V_{\ell,d}$; hence all $\phi_i^{(n)}$ vanish. Since the center is characteristic, $D(M)=\rho M$. Applying $D$ to a non-zero bracket
\(
[P_i^{(m)},P_i^{(2\ell-m)}]
\)
gives $\rho=2a$. Therefore the normalized derivation equals $a\delta_M$.

Now let $d=2$. Put
\[
s=-\mathrm iM_{12},\qquad
p_n=P_1^{(n)}+\mathrm iP_2^{(n)},\qquad
q_n=P_1^{(n)}-\mathrm iP_2^{(n)}.
\]
Then
\[
[s,p_n]=p_n,\qquad [s,q_n]=-q_n,
\]
and the mass cocycle becomes
\[
[p_r,p_t]=[q_r,q_t]=0,\qquad
[p_r,q_t]=2\delta_{r+t,2\ell}I_r M.
\]
Use the $s$-weight decomposition
\[
V=V^+\oplus V^-,\qquad
V^+=\operatorname{span}\{p_n\},\qquad
V^-=\operatorname{span}\{q_n\}.
\]
After subtracting an inner derivation we may assume that $D(e)=D(f)=D(h)=0$.
Since $[s,\mathfrak{sl}_2]=0$, applying $D$ to these relations shows that $D(s)$ centralizes
$\mathfrak{sl}_2$.  The module $V$ contains no trivial $\mathfrak{sl}_2$-submodule, so
\[
D(s)=as+bM.
\]
The identity obtained from $[s,v]$ on $V/\C M$ is
\[
T\,\ad(s)-\ad(s)\,T=a\,\ad(s),
\]
where $T$ is the induced endomorphism of $V$. Comparing the diagonal blocks on the $\pm1$ eigenspaces of $\ad(s)$ gives $a=0$. Hence
\[
D(s)=bM.
\]

The restriction of $D$ to $V$ is $\mathfrak{sl}_2$-equivariant. Hence, before using the $s$-action, it has the form
\[
D(p_n)=\alpha p_n+\beta q_n,\qquad
D(q_n)=\gamma p_n+\delta q_n.
\]
Applying $D$ to $[s,p_n]=p_n$ and $[s,q_n]=-q_n$ yields $\beta=\gamma=0$. No additional central terms occur because they would define $\mathfrak{sl}_2$-invariant functionals on the non-trivial irreducible modules $V^+$ and $V^-$. Thus
\[
D(p_n)=\alpha p_n,\qquad D(q_n)=\delta q_n.
\]
The mass cocycle gives $D(M)=(\alpha+\delta)M$. Taking
\[
\lambda=\frac{\alpha+\delta}{2},\qquad
\mu=\frac{\alpha-\delta}{2},
\]
and using
\[
\ad s(p_n)=p_n,\qquad \ad s(q_n)=-q_n,
\]
we obtain
\[
D=b\eta_M+\lambda\delta_M+\mu\ad s
\]
after normalization. Restoring the inner derivations proves \eqref{eq:massder2}. The outer maps $\delta_M$ and $\eta_M$ are plainly independent modulo inner derivations, so the sum is direct.
\end{proof}

\subsection{The exotic central extension}\label{subsec:der-exotic}
We finally treat the exotic extension. The complex basis adapted to the rotation generator and to the two conjugate radical strings makes the cocycle relations particularly efficient for determining all outer derivations.

We now compute the derivation algebra of $\widehat{\g}^{\,\Theta}_\ell(2)$ directly from its defining relations.

Recall the complex basis
\[
h=D,\qquad e=-H,\qquad f=C,\qquad s=-\mathrm iM_{12},
\]
\[
p_n=P_1^{(n)}+\mathrm iP_2^{(n)},\qquad
q_n=P_1^{(n)}-\mathrm iP_2^{(n)},\qquad
z=-2\mathrm i\Theta.
\]
The non-zero brackets are
\begin{equation}\label{eq:exotic-table}
\left\{
\begin{aligned}
[h,e]&=2e, & [h,f]&=-2f, & [e,f]&=h,\\[1mm]
[h,p_n]&=2(\ell-n)p_n, & [e,p_n]&=np_{n-1}, & [f,p_n]&=(2\ell-n)p_{n+1},\\[1mm]
[h,q_n]&=2(\ell-n)q_n, & [e,q_n]&=nq_{n-1}, & [f,q_n]&=(2\ell-n)q_{n+1},\\[1mm]
[s,p_n]&=p_n, & [s,q_n]&=-q_n,\\[1mm]
[p_m,q_n]&=\delta_{m+n,2\ell}(-1)^m(2\ell-m)!m!\,z.
\end{aligned}
\right.
\end{equation}
Here $0\le m,n\le2\ell$ and $p_{-1}=q_{-1}=p_{2\ell+1}=q_{2\ell+1}=0$.

Define linear maps $\eta_\Theta$ and $\delta_\Theta$ by
\begin{equation}\label{eq:etaTheta}
\eta_\Theta(s)=z,
\qquad
\eta_\Theta(x)=0\quad(x\neq s),
\end{equation}
and
\begin{equation}\label{eq:deltaTheta}
\delta_\Theta(p_n)=p_n,\qquad
\delta_\Theta(q_n)=q_n,\qquad
\delta_\Theta(z)=2z,
\end{equation}
with $\delta_\Theta(e)=\delta_\Theta(f)=\delta_\Theta(h)=\delta_\Theta(s)=0$.

\begin{lemma}\label{lem:exotic-outer}
The maps $\eta_\Theta$ and $\delta_\Theta$ are derivations of $\widehat{\g}^{\,\Theta}_\ell(2)$.
\end{lemma}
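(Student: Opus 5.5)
Both claims are checked directly against the bracket table \eqref{eq:exotic-table}. Since both maps are linear and the bracket is bilinear and skew-symmetric, it suffices to verify the derivation identity
\[
\mathcal D([x,y])=[\mathcal D(x),y]+[x,\mathcal D(y)]
\]
for pairs $x,y$ of basis vectors from $\{h,e,f,s,p_n,q_n,z\}$. This is exactly the pattern of Lemmas~\ref{lem:delta0} and~\ref{lem:deltaM}.

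For $\eta_\Theta$ we argue as for $\eta_M$ in Theorem~\ref{thm:massder}. Its image is $\C z$, which is central. Hence for any $x,y$ both terms on the right-hand side vanish:
\[
[\eta_\Theta(x),y]=[x,\eta_\Theta(y)]=0 .
\]
So the identity reduces to $\eta_\Theta([x,y])=0$, i.e.\ $[x,y]$ must have zero $s$-coefficient. We check that $s\notin[\widehat{\g}^{\,\Theta}_\ell(2),\widehat{\g}^{\,\Theta}_\ell(2)]$ modulo the other basis vectors. Inspecting \eqref{eq:exotic-table}, every bracket lies in
\[
\operatorname{span}\{h,e,f,p_n,q_n,z\}.
\]
Indeed, $s$ never appears on the right-hand side, since $\mathfrak{so}_2$ is abelian and $[s,\mathfrak{sl}_2]=0$. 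Thus $\eta_\Theta$ kills the derived algebra, and the identity holds.

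For $\delta_\Theta$ we view it as the grading derivation attached to the $\mathbb Z$-grading with $\mathfrak{sl}_2\oplus\C s$ in degree $0$, the $p_n,q_n$ in degree $1$, and $z$ in degree $2$. Then $\delta_\Theta$ acts as multiplication by the degree, and it is a derivation precisely when the bracket respects the grading. We check this family by family:
\begin{itemize}
\item brackets inside $\mathfrak{sl}_2\oplus\C s$ stay in degree $0$, and both sides of the identity vanish;
\item brackets of $h,e,f,s$ with $p_n$ or $q_n$ land in $\operatorname{span}\{p_n,q_n\}$, so both sides equal $1\cdot[x,y]$, as in Lemma~\ref{lem:delta0};
\item $[p_m,q_n]$ is a multiple of $z$; the left side gives $2[p_m,q_n]$ and the right side gives $[p_m,q_n]+[p_m,q_n]$, just as in Lemma~\ref{lem:deltaM};
\item brackets $[p_m,p_n]=[q_m,q_n]=0$ and all brackets involving $z$ are zero, so there is nothing to check.
\end{itemize}

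There is no real obstacle here. The only point needing care is to confirm from \eqref{eq:exotic-table} that no bracket has a component along $s$, which is needed for $\eta_\Theta$. We must also confirm that the exotic cocycle only pairs $p$ with $q$, so that the degree count $1+1=2$ covers every non-zero bracket in the radical.
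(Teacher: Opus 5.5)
Your proposal is correct and follows essentially the same route as the paper. For $\eta_\Theta$ you use that $z$ is central and that $s$ never appears on the right-hand side of a bracket; for $\delta_\Theta$ you do the family-by-family check, merely phrased as a $\mathbb Z$-grading with $z$ in degree $2$.
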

\begin{proof}
Since $z$ is central and $s$ does not occur on the right-hand side of any non-zero bracket, the derivation identity for $\eta_\Theta$ follows immediately.

For $\delta_\Theta$, all brackets involving $e,f,h,s$ and one of $p_n,q_n$ are preserved because $\delta_\Theta$ acts as the identity on both strings. For the Heisenberg bracket,
\[
\delta_\Theta([p_m,q_n])=2[p_m,q_n]
=[\delta_\Theta(p_m),q_n]+[p_m,\delta_\Theta(q_n)].
\]
Thus $\delta_\Theta$ is a derivation.
\end{proof}

\begin{theorem}\label{thm:exoticder}
For every positive integer $\ell$,
\begin{equation}\label{eq:exoticder}
\Der\widehat{\g}^{\,\Theta}_\ell(2)
=
\Inn\widehat{\g}^{\,\Theta}_\ell(2)
\oplus\C\eta_\Theta
\oplus\C\delta_\Theta.
\end{equation}
\end{theorem}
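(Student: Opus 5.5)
We follow the scheme of the proof of Theorem~\ref{thm:massder} for $d=2$, now with the alternating cocycle \eqref{eq:exotic-table}. Put $L=\widehat{\g}^{\,\Theta}_\ell(2)$ and $R=\mathfrak h_\ell=\operatorname{span}\{p_n,q_n,z\}$. Then $R$ is the nilradical, hence characteristic, and $Z(L)=\C z$ is characteristic as well.

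Let $D\in\Der L$. By Whitehead's lemma applied to the $\slc$-module $L$, we subtract an inner derivation so that $D(e)=D(f)=D(h)=0$. Since $s$ commutes with $\slc$, $D(s)$ lies in the $\slc$-centralizer of $L$. This centralizer is $\C s\oplus\C z$, because $V=V^+\oplus V^-$ has no trivial $\slc$-submodule. So $D(s)=as+bz$.

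Next, $D$ preserves $R$, and modulo $z$ it induces an $\slc$-equivariant endomorphism $T$ of $V$. Since $V^\pm$ are both isomorphic to the irreducible module of highest weight $2\ell$, we may write
\[
D(p_n)=\alpha p_n+\beta q_n+\phi_n z,\qquad D(q_n)=\gamma p_n+\delta q_n+\psi_n z.
\]
The central coefficients $\phi_n,\psi_n$ define $\slc$-equivariant maps $V^\pm\to\C z$. These maps vanish because $V^\pm$ is a non-trivial irreducible module. Concretely, applying $D$ to $[h,p_n]=2(\ell-n)p_n$ kills $\phi_n$ for $n\ne\ell$, and applying $D$ to $[e,p_{\ell}]=\ell p_{\ell-1}$ kills $\phi_\ell$. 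The same argument gives $\psi_n=0$.

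Now apply $D$ to $[s,p_n]=p_n$. This gives $\alpha p_n+\beta q_n=a p_n+\alpha p_n-\beta q_n$, hence $a=0$ and $\beta=0$. Similarly $[s,q_n]=-q_n$ gives $\gamma=0$. The only remaining relation is the Heisenberg bracket $[p_m,q_{2\ell-m}]=(-1)^m(2\ell-m)!m!\,z\neq0$. It forces $D(z)=(\alpha+\delta)z$, which is consistent for every $m$. Put $\lambda=(\alpha+\delta)/2$ and $\mu=(\alpha-\delta)/2$. Comparing values on the basis, and using $\ad s(p_n)=p_n$, $\ad s(q_n)=-q_n$, $\ad s(z)=0$, we get
\[
D=b\,\eta_\Theta+\lambda\,\delta_\Theta+\mu\,\ad s.
\]
Restoring the inner derivation shows that $\Der L$ is spanned by $\Inn L$, $\eta_\Theta$ and $\delta_\Theta$; both maps are derivations by Lemma~\ref{lem:exotic-outer}.

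It remains to prove that the sum is direct. Every inner derivation maps $L$ into $[L,L]$. The derived algebra contains $\slc$, $V$ and $z$, but not $s$, since $s$ never appears on the right-hand side of a bracket. Suppose $x\eta_\Theta+y\delta_\Theta=\ad w$.

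First evaluate on $s$. We get $xz=[w,s]$. The bracket $[w,s]$ has no $z$-component, because $\ad s$ maps $L$ into $V$. Hence $x=0$.

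Next, $y\delta_\Theta$ kills $\slc$ and $s$, so $w$ centralizes $\slc\oplus\C s$. Therefore $w\in\C s\oplus\C z$. But $\ad w$ acts as $0$ on $z$, while $\delta_\Theta(z)=2z$. Hence $y=0$.

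The main obstacle is this independence step, together with the correct identification of the $\slc$-centralizer. The key point is that $V^+\cong V^-$ as $\slc$-modules, which is exactly why the $s$-relations are needed to rule out mixing terms between the two strings.
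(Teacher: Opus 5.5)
Your proposal is correct and follows essentially the same route as the paper: Whitehead normalization on $\mathfrak{sl}_2$, $D(s)=as+bz$, Schur's lemma on $V^+\cong V^-$, the relations $[s,p_n]=p_n$ and $[s,q_n]=-q_n$, and the cocycle forcing $D(z)=(\alpha+\delta)z$. Your explicit use of the characteristic nilradical (which also covers $\ell=1$, where $V^\pm\cong\mathfrak{sl}_2$ as modules) and your evaluation on $s$ and $z$ for directness supply details the paper only asserts.

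One small slip: applying $D$ to $[e,p_\ell]=\ell p_{\ell-1}$ only gives $\phi_{\ell-1}=0$. To kill $\phi_\ell$, use $[e,p_{\ell+1}]=(\ell+1)p_\ell$ instead, although the abstract Schur argument you give first already suffices.
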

\begin{proof}
Let $L=\widehat{\g}^{\,\Theta}_\ell(2)$ and let $D\in\Der L$. The subalgebra
\[
\mathfrak s=\operatorname{span}\{e,f,h\}\cong\mathfrak{sl}_2(\C)
\]
is semisimple. By Whitehead's lemma there exists $u\in L$ such that, after replacing $D$ by $D-\ad u$,
\begin{equation}\label{eq:ex-normal1}
D(e)=D(f)=D(h)=0.
\end{equation}
Because $[\mathfrak s,s]=0$, relation \eqref{eq:ex-normal1} gives $[x,D(s)]=0$ for $x\in\mathfrak s$. Hence
\[
D(s)=as+bz.
\]

The restriction of $D$ to
\[
V^+=\operatorname{span}\{p_0,\ldots,p_{2\ell}\},\qquad
V^-=\operatorname{span}\{q_0,\ldots,q_{2\ell}\}
\]
is $\mathfrak{sl}_2$-equivariant.  No additional $z$-component can occur, because it would define an $\mathfrak{sl}_2$-homomorphism from the non-trivial irreducible module $U_{2\ell}$ to the trivial module $\C z$.  Hence
\[
D(p_n)=\alpha p_n+\beta q_n,\qquad
D(q_n)=\gamma p_n+\delta q_n.
\]
Applying $D$ to $[s,p_n]=p_n$ gives
\[
\alpha p_n+\beta q_n
=(a+\alpha)p_n-\beta q_n,
\]
so $a=\beta=0$. Similarly, the relation $[s,q_n]=-q_n$ yields $\gamma=0$. Therefore
\begin{equation}\label{eq:ex-normal2}
D(s)=bz,\qquad
D(p_n)=\alpha p_n,\qquad
D(q_n)=\delta q_n.
\end{equation}
Since $Z(L)=\C z$, write $D(z)=\rho z$. Applying $D$ to a non-zero bracket $[p_m,q_{2\ell-m}]$ gives
\[
\rho=\alpha+\delta.
\]
Set
\[
\lambda=\frac{\alpha+\delta}{2},\qquad
\mu=\frac{\alpha-\delta}{2}.
\]
Using
\[
\ad s(p_n)=p_n,\qquad \ad s(q_n)=-q_n,\qquad \ad s(z)=0,
\]
we obtain
\[
D=b\eta_\Theta+\lambda\delta_\Theta+\mu\ad s
\]
after the normalization \eqref{eq:ex-normal1}. Restoring the inner derivation subtracted at the beginning proves \eqref{eq:exoticder}. Finally, $\eta_\Theta$ and $\delta_\Theta$ are linearly independent modulo $\Inn L$, so the sum is direct.
\end{proof}

\subsection{Comparison of the three derivation algebras}\label{subsec:der-comparison}
We conclude the derivation section by placing the three descriptions side by side. This comparison isolates precisely which outer directions survive the introduction of a central cocycle and prepares the normalization used for local derivations.

The preceding theorems show exactly how the central cocycle changes the outer derivation space.  In the non-central case there is one scalar outer derivation.  The mass cocycle forces the center to have twice the scalar weight, and in dimension $2$ it also permits the central shear $s\mapsto M$.  The exotic cocycle has the same scalar-weight phenomenon and the analogous shear $s\mapsto z$.  Thus
\[
\begin{aligned}
\Der\g_\ell(d)/\Inn\g_\ell(d)&\cong\C,\\
\Der\widehat{\g}^{\,M}_\ell(d)/\Inn\widehat{\g}^{\,M}_\ell(d)&\cong\C \qquad (d\neq2),\\
\Der\widehat{\g}^{\,M}_\ell(2)/\Inn\widehat{\g}^{\,M}_\ell(2)&\cong\C^2,\\
\Der\widehat{\g}^{\,\Theta}_\ell(2)/\Inn\widehat{\g}^{\,\Theta}_\ell(2)&\cong\C^2.
\end{aligned}
\]

\section{Local derivations}\label{sec:local}
Having determined the full derivation algebras, we now turn to the local-to-global problem. The aim of this section is to decide, in each of the three settings, whether pointwise agreement with ordinary derivations forces a linear map to be a derivation and, when it does not, to describe the additional pure local component explicitly.

\begin{definition}
A linear map $\Delta:L\to L$ is called a local derivation if for every $x\in L$ there exists $D_x\in\Der L$ such that
\[
\Delta(x)=D_x(x).
\]
\end{definition}

The descriptions in Section~\ref{sec:derivations} give a pointwise normal form for every local derivation.  We now record the first coefficient restrictions that will be used in the complete proof.

\subsection{Local derivations on \texorpdfstring{$\g_\ell(d)$}{g-l(d)}}\label{subsec:local-noncentral}
We begin with the algebra without central extension, where the radical is abelian and therefore allows the greatest possible local freedom. The problem can be reformulated in terms of range-local coboundaries for the $\mathfrak{sl}_2\oplus\mathfrak{so}_d$-module carried by the radical.

Put $m=2\ell$ and write
\[
U_m=\operatorname{span}_{\C}\{u_0,\ldots,u_m\},\qquad
W_d=\C^d,
\]
where $u_k$ denotes the $\mathfrak{sl}_2$-weight vector of weight $m-2k$.  Thus
\[
V_{\ell,d}\cong U_m\otimes W_d,
\]
and the $\mathfrak{sl}_2$-action on $U_m$ is
\begin{equation}\label{eq:Um-action}
 h u_k=(m-2k)u_k,\qquad
 e u_k=k u_{k-1},\qquad
 f u_k=(m-k)u_{k+1},
\end{equation}
with $u_{-1}=u_{m+1}=0$.  Here, as before,
$h=D$, $e=-H$, and $f=C$.

By Theorem~\ref{thm:der0}, for every $x\in\g_\ell(d)$ there exist
$a_x\in\g_\ell(d)$ and $\lambda_x\in\C$ such that
\begin{equation}\label{eq:local0}
\Delta(x)=[a_x,x]+\lambda_x\delta_0(x).
\end{equation}

\begin{lemma}\label{lem:local0-support}
Let $\Delta\in\LocDer\g_\ell(d)$. Then
\begin{align*}
\Delta(H)&\in\operatorname{span}\{D,H,P_i^{(r)}:1\le i\le d,\ 0\le r\le2\ell-1\},\\
\Delta(C)&\in\operatorname{span}\{D,C,P_i^{(r)}:1\le i\le d,\ 1\le r\le2\ell\},\\
\Delta(D)&\in\operatorname{span}\{H,C,P_i^{(r)}:1\le i\le d,\ r\neq\ell\},
\end{align*}
and, for $0\le k\le2\ell$,
\begin{equation}\label{eq:local0-Psupport}
\Delta(P_i^{(k)})\in
\operatorname{span}\Bigl(
\{P_i^{(k-1)},P_i^{(k+1)}\}\cup\{P_j^{(k)}:1\le j\le d\}
\Bigr),
\end{equation}
where the terms with indices $-1$ and $2\ell+1$ are omitted.
\end{lemma}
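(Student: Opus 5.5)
The plan is to work directly from the pointwise normal form \eqref{eq:local0}, which comes from Theorem~\ref{thm:der0}. Fix a basis element $x$ among $H,C,D,P_i^{(k)}$. By \eqref{eq:local0} there are $a_x\in\g_\ell(d)$ and $\lambda_x\in\C$ with $\Delta(x)=[a_x,x]+\lambda_x\delta_0(x)$. It is therefore enough to show that both $[a,x]$, for an arbitrary $a\in\g_\ell(d)$, and $\delta_0(x)$ lie in the stated span. Since bracketing is linear in $a$, I expand
\[
a=\alpha D+\beta H+\gamma C+\sum_{i<j}\mu_{ij}M_{ij}+\sum_{i,r}c_i^{(r)}P_i^{(r)},
\]
and read off $[a,x]$ term by term from the multiplication table \eqref{eq:noncentral-table} and skew-symmetry.

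For $x=H$ I have $\delta_0(H)=0$. The only non-zero contributions to $[a,H]$ are the following:
\[
[D,H]=2H,\qquad [C,H]=D,\qquad [P_i^{(r)},H]=rP_i^{(r-1)}.
\]
The last term vanishes when $r=0$, so only the levels $0\le r-1\le 2\ell-1$ occur. For $x=C$ the same reasoning uses
\[
[D,C]=-2C,\qquad [H,C]=-D,\qquad [P_i^{(r)},C]=-(2\ell-r)P_i^{(r+1)}.
\]
The last term vanishes when $r=2\ell$, so only the levels $1,\dots,2\ell$ appear. For $x=D$ the contributions are
\[
[H,D]=-2H,\qquad [C,D]=2C,\qquad [P_i^{(r)},D]=-2(\ell-r)P_i^{(r)}.
\]
The coefficient in the last term vanishes exactly at $r=\ell$ (which occurs only when $\ell$ is an integer). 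In each of these three cases the elements $M_{ij}$ commute with $\mathfrak{sl}_2$ and contribute nothing.

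For $x=P_i^{(k)}$, the radical is abelian, so the $P$-part of $a$ contributes nothing. The element $D$ gives a multiple of $P_i^{(k)}$, $H$ gives a multiple of $P_i^{(k-1)}$, and $C$ gives a multiple of $P_i^{(k+1)}$. Each $M_{jl}$ gives a combination of the vectors $P_j^{(k)}$ at the same level $k$. Finally, $\delta_0(P_i^{(k)})=P_i^{(k)}$, which also lies in the span. This yields \eqref{eq:local0-Psupport}, where the boundary conventions $P^{(-1)}=P^{(2\ell+1)}=0$ account for the omitted terms.

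There is no real obstacle here; the argument is a direct inspection of the table. The only point requiring care is tracking the boundary levels and the vanishing coefficients $r$, $2\ell-r$ and $\ell-r$, which is exactly what produces the restricted index ranges in the statement.
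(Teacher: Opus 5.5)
Your proposal is correct and follows the same route as the paper: you insert the pointwise normal form \eqref{eq:local0} from Theorem~\ref{thm:der0} and read off $[a_x,x]$ and $\delta_0(x)$ from the multiplication table \eqref{eq:noncentral-table}. Your term-by-term tracking of the vanishing coefficients $r$, $2\ell-r$ and $\ell-r$ is a more explicit version of the paper's one-line inspection.
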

\begin{proof}
This follows immediately from \eqref{eq:local0} and the multiplication table
\eqref{eq:noncentral-table}.  The outer derivation $\delta_0$ vanishes on the reductive part and acts as the identity on $V_{\ell,d}$.  The $e$- and $f$-components shift the level by one, $h$ preserves the level, and the rotation generators change only the spatial index.  Since $V_{\ell,d}$ is abelian, no central component can occur.
\end{proof}

\begin{lemma}\label{lem:local-on-S}
Let
\[
S=\mathfrak{sl}_2\oplus\mathfrak{so}_d.
\]
Every local derivation on $S$ is a derivation.
\end{lemma}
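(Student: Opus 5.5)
The plan is to reduce the statement to the Ayupov--Kudaybergenov theorem \cite{AK}: every local derivation on a finite-dimensional semisimple Lie algebra over $\C$ is a derivation. Three cases arise. If $d\neq2$, then $\sod$ is semisimple or zero (for $d=1$ it is zero, and for $d=3,4,\dots$ it is semisimple), so $S$ is semisimple and \cite{AK} applies directly. The only remaining case is $d=2$, where $\mathfrak{so}_2=\C s$ is one-dimensional and $S=\slc\oplus\C s$ is reductive with centre $\C s$.

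For $d=2$ we first describe $\Der S$. Any derivation preserves the derived algebra $[S,S]=\slc$ and the centre $\C s$. Its restriction to $\slc$ is inner, say $\ad a$ with $a\in\slc$, and it maps $s$ to $cs$. Conversely, every such map is a derivation, since $[s,\slc]=0$ imposes no further condition. Hence
\[
\Der S=\ad(\slc)\oplus\C\,\epsilon,\qquad \epsilon(s)=s,\ \epsilon|_{\slc}=0 .
\]

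Now let $\Delta$ be a local derivation of $S$. Pointwise, $\Delta(x)=[a_x,x]+c_x\epsilon(x)$. For $x\in\slc$ this gives $\Delta(x)=[a_x,x]\in\slc$, so $\Delta|_{\slc}$ is a local derivation of $\slc$. By \cite{AK} it is a derivation, and therefore equals $\ad a$ for some $a\in\slc$. For $x=s$ we get $\Delta(s)=c_s s$. Setting $c=c_s$, the map $\ad a+c\epsilon$ agrees with $\Delta$ on $\slc$ and on $s$. By linearity the two maps coincide, so $\Delta=\ad a+c\epsilon\in\Der S$.

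The argument involves no real obstacle. The one point needing care is that local derivations do not obviously respect direct sums, since the value on $x+s$ is not a priori determined componentwise. This is avoided here because $\Delta$ is linear and we only evaluate it on the basis pieces $\slc$ and $\C s$, each of which $\Delta$ maps into itself. For $d\ge3$ one could worry that $\mathfrak{so}_4\cong\slc\oplus\slc$ is not simple, but \cite{AK} covers all semisimple algebras, so no separate treatment is needed.
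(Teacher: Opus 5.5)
Your proof is correct and follows the paper's argument essentially step for step. For $d\neq 2$ you invoke the Ayupov--Kudaybergenov theorem for the semisimple algebra $S$; for $d=2$ you use that derivations preserve $[S,S]=\mathfrak{sl}_2$ and $Z(S)=\C s$, apply the semisimple result to the restriction, and write $\Delta=\ad a+c\,\epsilon$. The only difference is that you describe $\Der S$ explicitly, which the paper leaves implicit.
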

\begin{proof}
If $d\neq2$, then $S$ is semisimple (with $\mathfrak{so}_1=0$), and the assertion is the standard finite-dimensional semisimple result for local derivations.

Let $d=2$ and write $S=\mathfrak{sl}_2\oplus\C s$.  Every derivation of $S$ preserves both the derived ideal
$[S,S]=\mathfrak{sl}_2$ and the center $Z(S)=\C s$.  Hence, if $\Lambda$ is local, then
\[
\Lambda(\mathfrak{sl}_2)\subseteq\mathfrak{sl}_2,
\qquad
\Lambda(s)\in\C s.
\]
The restriction $\Lambda|_{\mathfrak{sl}_2}$ is a local derivation of $\mathfrak{sl}_2$, so
$\Lambda|_{\mathfrak{sl}_2}=\ad a$ for some $a\in\mathfrak{sl}_2$.  Writing
$\Lambda(s)=\mu s$, we obtain
\[
\Lambda=\ad a+\mu\vartheta,
\qquad
\vartheta|_{\mathfrak{sl}_2}=0,
\quad
\vartheta(s)=s,
\]
which is a derivation of $S$.
\end{proof}

The next two lemmas settle the operator-space reflexivity needed for the module part of a local derivation.

\begin{lemma}\label{lem:sl2-reflexive}
Let
\[
\mathcal A_m=\rho_m(\mathfrak{sl}_2)+\C I_{U_m}\subseteq\operatorname{End}(U_m).
\]
If $T\in\operatorname{End}(U_m)$ satisfies
\[
T(u)\in\mathcal A_m u\qquad(u\in U_m),
\]
then $T\in\mathcal A_m$.
\end{lemma}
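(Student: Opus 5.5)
The plan is to work in the weight basis $u_0,\dots,u_m$ and first reduce $T$ to a normal form by subtracting a suitable element of $\mathcal A_m$. The case $m=0$ is trivial, since $\operatorname{End}(U_0)=\C=\mathcal A_0$. For $m=1$ we have $\mathcal A_1=\mathfrak{sl}_2+\C I=\mathfrak{gl}_2=\operatorname{End}(U_1)$, so there is nothing to prove. Assume $m\ge2$.

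\emph{Step 1: support and normalization.} Since $e$, $f$ shift the index by $\mp1$ and $h$, $I$ act diagonally, the hypothesis applied to $u=u_k$ gives
\[
T(u_k)=a_ku_{k-1}+b_ku_k+c_ku_{k+1},\qquad a_0=c_m=0 .
\]
The extreme vectors are more restrictive: $\mathcal A_mu_0=\operatorname{span}\{u_0,u_1\}$ and $\mathcal A_mu_m=\operatorname{span}\{u_{m-1},u_m\}$. The values of $h$ and $I$ on $u_0$ and $u_m$ are $(m,1)$ and $(-m,1)$, which are independent for $m\ge1$. Hence we can subtract $\alpha e+\gamma f+\beta h+\lambda I$ with
\[
\gamma=c_0,\qquad \alpha=a_m/m,
\]
and $\beta,\lambda$ solving the $2\times2$ diagonal system at $u_0$ and $u_m$. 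This gives $T(u_0)=T(u_m)=0$. The new $T$ still satisfies the hypothesis, because $\mathcal A_m$ is a linear space and $(T-x)(u)=(x_u-x)u$. It remains to show that the normalized $T$ is zero.

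\emph{Step 2: two-term test vectors.} Take $u=u_0+u_k$ with $k\ge3$. Then $T(u)=T(u_k)$ is supported on levels $k-1,k,k+1$, all $\ge2$. On the other hand, writing $x=\alpha'e+\gamma'f+\beta'h+\lambda'I$,
\[
xu=(\beta'm+\lambda')u_0+\gamma'm\,u_1+x u_k .
\]
Comparing the components at levels $0$ and $1$ forces $\gamma'=0$ and $\lambda'=-\beta'm$. Then
\[
T(u_k)=\alpha'k\,u_{k-1}-2k\beta'\,u_k .
\]
Symmetrically, $u=u_k+u_m$ with $k\le m-3$ gives
\[
T(u_k)=\gamma''(m-k)\,u_{k+1}+2(m-k)\beta''\,u_k .
\]
For $3\le k\le m-3$, comparing the two expressions kills all coefficients, so $T(u_k)=0$. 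For $k$ near the ends, at most one of the two descriptions is available. Comparing coefficients of the form forced by $u_0+u_k$ with the unrestricted form of $T(u_k)$ still yields $c_k=0$ for $k\ge3$ and $a_k=0$ for $k\le m-3$.

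\emph{Step 3: boundary levels and small $m$, the main obstacle.} What remains is to kill the coefficients at $k\in\{1,2,m-2,m-1\}$, and to handle all levels when $m\le5$, where Step 2 gives little or nothing. Here I would use three-term and generic test vectors, for instance $u_0+u_1+u_k$, $u_{k-1}+u_k+u_{k+1}$, or the orbit vector $\exp(tf)u_0=\sum_k\binom mk t^ku_k$. For such $u$ the map $x\mapsto xu$ from $\mathcal A_m$ to $U_m$ is injective once $m\ge3$. The equation $T(u)=x_uu$ then determines $x_u$ uniquely as a function of $t$. Expanding in powers of $t$ and using $T(u_0)=T(u_m)=0$ should produce linear relations forcing the remaining $a_k,b_k,c_k$ to vanish. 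The small cases $m=2,3,4,5$ are finite linear-algebra checks: for instance, for $m=2$ one shows directly that the $3\times3$ matrices with $T(u)\in\mathcal A_2u$ for all $u$ form exactly the $4$-dimensional space $\mathcal A_2$. I expect this boundary and low-$m$ bookkeeping to be the only delicate part. In particular, $m=2$, where $\dim\mathcal A_2=4>\dim U_2$, must be handled by hand. Once the normalized $T$ vanishes, $T\in\mathcal A_m$.
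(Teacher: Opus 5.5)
\textbf{There is a genuine gap, and it is already in Step 2, not only in the boundary bookkeeping.}

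From $u_0+u_k$ you correctly get $T(u_k)=\alpha'k\,u_{k-1}-2k\beta'\,u_k$. From $u_k+u_m$ you get $T(u_k)=\gamma''(m-k)\,u_{k+1}+2(m-k)\beta''\,u_k$. Comparing the two kills $a_k$ and $c_k$. The diagonal coefficient, however, only satisfies $b_k=-2k\beta'=2(m-k)\beta''$ with $\beta',\beta''$ free, so nothing forces $b_k=0$.

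In fact no test vector with at most two weight components can ever constrain a diagonal $T$. For $u=\xi u_j+\eta u_k$ with $j\neq k$, choose $\beta h+\lambda I$ taking the prescribed values $b_j,b_k$ at the two distinct weights $m-2j$ and $m-2k$. For instance, take $m\ge6$, $3\le k\le m-3$, and let $T$ be the projection onto $\C u_k$ along the other weight vectors. This $T$ satisfies your normalization and passes every test in Step 2, yet $T\notin\mathcal A_m$, because a diagonal element of $\mathcal A_m$ has diagonal entries affine in the weight.

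That affineness is the missing ingredient, and it needs at least three-term tests. For example, $u_0+u_k+u_m$ forces $b_k=0$ for $3\le k\le m-3$ once $a_k=c_k=0$. The levels $1,2,m-2,m-1$ and the cases $m\le5$ would still have to be carried out, and Step 3 only announces that they ``should'' work. A minor point: in Step 1 you need $\gamma=c_0/m$, since $fu_0=mu_1$.

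Step 3 also rests on a false claim. For the orbit vector $v=\exp(tf)u_0$, the map $x\mapsto xv$ on $\mathcal A_m$ is not injective for any $m$. Indeed $\mathcal A_mv=\exp(tf)\operatorname{span}\{u_0,u_1\}$ is two-dimensional (the tangent plane to the cone of $m$-th powers), so $x_v$ is not determined.

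That two-dimensionality is exactly what the paper's proof exploits. With $v(t)=\sum_k\binom mk t^ku_k$ one has $Tv(t)=\alpha(t)v(t)+\beta(t)v'(t)$. Eliminating $\alpha,\beta$ through the $u_0$- and $u_1$-coordinates gives
\[
\frac{w_k(t)}{\binom mk}=\frac km\,t^{k-1}w_1(t)-(k-1)\,t^kw_0(t)
\]
for the coordinates $w_k$ of $Tv(t)$. Since every $w_k$ has degree at most $m$, the cases $k=m$ and $k=m-1$ force $w_0$ to be linear and $w_1$ quadratic. Reading off coefficients then shows that $T$ is a combination of $e,f,h,I$. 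This treats all $m\ge2$ uniformly, with no boundary or small-$m$ case analysis.

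If you want to keep your weight-basis approach, you must at least add the three-term argument for the diagonal and actually complete the boundary levels. Switching to the orbit-vector polynomial identity is the cleaner fix.
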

\begin{proof}
For $m=1$, one has $\mathcal A_1=\operatorname{End}(U_1)$, so there is nothing to prove. Assume $m\ge2$ and consider the rational normal curve
\[
v(t)=\sum_{k=0}^{m}\binom{m}{k}t^k u_k.
\]
The space $\mathcal A_m v(t)$ is the tangent space to the affine cone over this curve, hence
\[
T v(t)=\alpha(t)v(t)+\beta(t)v'(t)
\]
for suitable scalars depending on $t$.  Let $w_k(t)$ be the $u_k$-coordinate of $Tv(t)$. Since
$w_0=\alpha$ and $w_1=m(\alpha t+\beta)$, for $1\le k\le m$ we obtain
\begin{equation}\label{eq:rnc-identity}
\frac{w_k(t)}{\binom{m}{k}}
=\frac{k}{m}t^{k-1}w_1(t)-(k-1)t^k w_0(t).
\end{equation}
Each $w_k$ has degree at most $m$.  Taking $k=m$ in \eqref{eq:rnc-identity}, and for $m\ge3$ also $k=m-1$, comparison of the coefficients of degrees greater than $m$ yields
\[
w_0(t)=a_0+a_1t,
\qquad
w_1(t)=b_0+b_1t+(m-1)a_1t^2.
\]
(The case $m=2$ follows already from the equation with $k=m$.) Substitution into
\eqref{eq:rnc-identity} gives
\[
\frac{w_k(t)}{\binom{m}{k}}
=\frac{k b_0}{m}t^{k-1}
+\left(a_0+k\left(\frac{b_1}{m}-a_0\right)\right)t^k
+\frac{(m-k)a_1}{m}t^{k+1}.
\]
Comparing with
$w_k(t)=\sum_j T_{kj}\binom{m}{j}t^j$ shows that
\[
Tu_j=\frac{a_1}{m}j u_{j-1}
+\left(\alpha+\beta(m-2j)\right)u_j
+\frac{b_0}{m}(m-j)u_{j+1}
\]
for suitable $\alpha,\beta\in\C$. Hence
$T\in\operatorname{span}\{I,e,f,h\}=\mathcal A_m$.
\end{proof}

\begin{lemma}\label{lem:so-reflexive}
Let
\[
\mathcal B_d=\mathfrak{so}_d(\C)+\C I_{W_d}\subseteq\operatorname{End}(W_d).
\]
If $Cw\in\mathcal B_d w$ for every $w\in W_d$, then $C\in\mathcal B_d$.
\end{lemma}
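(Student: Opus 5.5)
Write $\mathcal B_d=\{X+\lambda I: X^{\mathsf T}=-X,\ \lambda\in\C\}$, identifying $\mathfrak{so}_d(\C)$ with the complex skew-symmetric matrices acting on $W_d=\C^d$ with the standard bilinear form $\langle\cdot,\cdot\rangle$. The small cases are immediate. If $d=1$, then $\mathcal B_1=\C I=\operatorname{End}(W_1)$. If $d=2$, $\mathcal B_2$ is spanned by $I$ and the rotation $J$ with $Je_1=e_2$, $Je_2=-e_1$; it is a commutative two-dimensional algebra and also the centralizer of $J$. So assume $d\ge2$ in general and $d\ge3$ wherever the argument needs it.

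The plan is to decompose $C=S+A$, where $S=\frac12(C+C^{\mathsf T})$ is symmetric and $A$ is skew. Since $A\in\mathfrak{so}_d$, the map $S=C-A$ again satisfies $Sw\in\mathcal B_d w$ for all $w$, because $\mathcal B_d w$ is a subspace containing $Aw$. It therefore suffices to show that a symmetric $S$ with this property is a scalar.

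For $d\ge2$, take the standard basis vectors $w=e_i$. The space $\mathcal B_d e_i=\operatorname{span}\{e_i\}+\{Xe_i\}$ is all of $\C^d$, since $(E_{ji}-E_{ij})e_i=e_j$, so individual basis vectors impose no condition. Instead I will use the bilinear form. For $X$ skew, $\langle Xw,w\rangle=0$. Hence $Sw=\lambda_w w+X_w w$ gives
\[
\langle Sw,w\rangle=\lambda_w\langle w,w\rangle ,
\]
so the quadratic form $Q_S(w)=\langle Sw,w\rangle$ vanishes on the isotropic cone $\{\langle w,w\rangle=0\}$. For $d\ge3$ the quadric $\sum w_i^2$ is irreducible, so $Q_S$ is divisible by it. Comparing degrees gives $Q_S=c\sum w_i^2$, and since $S$ is symmetric, $S=cI$. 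For $d=2$ the cone is the union of the two lines $w_2=\pm \mathrm i w_1$. Vanishing of $Q_S$ on both lines again forces $Q_S$ to be a multiple of $w_1^2+w_2^2$, because a binary quadratic form vanishing at two distinct points of $\mathbb P^1$ is determined up to a scalar. Thus $S=cI$ in this case as well.

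The argument therefore yields $C=A+cI\in\mathcal B_d$. The step needing the most care is the divisibility claim: that a quadratic form vanishing on the isotropic cone is a multiple of $\langle w,w\rangle$. I would justify it by irreducibility of $\sum w_i^2$ for $d\ge3$ together with the Nullstellensatz. Alternatively, one can argue directly: the vectors $e_i\pm\mathrm i e_j$ are isotropic, and evaluating $Q_S$ on them gives $S_{ii}-S_{jj}\pm 2\mathrm i S_{ij}=0$, hence $S_{ij}=0$ for $i\ne j$ and $S_{ii}=S_{jj}$. This direct check works uniformly for all $d\ge2$ and avoids the algebraic-geometry step, so I would present that version.
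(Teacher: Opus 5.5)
Your proof is correct and takes the same route as the paper: for skew $X$ one has $\langle Xw,w\rangle=0$, so the quadratic form $w\mapsto w^{\mathsf T}Sw$ of the symmetric part vanishes on the isotropic cone, and this forces $S$ to be a scalar. The finishing step differs. The paper uses irreducibility of $q$ and a divisibility argument, which needs $d\ge3$, and treats $d=2$ separately through invariance of the isotropic lines. Your evaluation at the isotropic vectors $e_j\pm\mathrm i e_k$ gives $S_{jk}=0$ and $S_{jj}=S_{kk}$ directly, uniformly for all $d\ge2$, without the Nullstellensatz, which is more elementary.
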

\begin{proof}
The statement is trivial for $d=1$. For $d=2$, the two isotropic lines for the quadratic form
$q(w)=w_1^2+w_2^2$ must be invariant under $C$; hence $C$ is a linear combination of the identity and the standard skew generator.

Assume $d\ge3$.  If $q(w)\ne0$, then
$\C w+\mathfrak{so}_d w=W_d$.  For a non-zero isotropic vector $w$, however,
$\C w+\mathfrak{so}_d w=w^\perp$. Therefore
\[
w^{\mathsf T}Cw=0\qquad\text{whenever }q(w)=0.
\]
Only the symmetric part $S=(C+C^{\mathsf T})/2$ contributes to this quadratic polynomial. Since the irreducible quadratic form $q$ divides the quadratic form $w^{\mathsf T}Sw$, one has
$S=\lambda I$. Consequently $C-\lambda I$ is skew-symmetric, proving the claim.
\end{proof}

\begin{proposition}\label{prop:module-reflexive}
Let $S=\mathfrak{sl}_2\oplus\mathfrak{so}_d$ and let
$\rho:S\to\mathfrak{gl}(V_{\ell,d})$ be the natural action. Then
\begin{equation}\label{eq:module-reflexive}
\operatorname{Ref}\bigl(\rho(S)+\C I\bigr)=\rho(S)+\C I.
\end{equation}
\end{proposition}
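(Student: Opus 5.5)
The plan is to reduce \eqref{eq:module-reflexive} to the two one-factor statements already proved, Lemma~\ref{lem:sl2-reflexive} for $\mathcal A_m$ and Lemma~\ref{lem:so-reflexive} for $\mathcal B_d$. The bridge is that $\rho(S)+\C I=\mathcal A_m\otimes I_{W_d}+I_{U_m}\otimes\mathcal B_d$ inside $\operatorname{End}(U_m\otimes W_d)$. Let $T$ satisfy $Tv\in(\rho(S)+\C I)v$ for every $v\in V_{\ell,d}\cong U_m\otimes W_d$. I will only test this condition on decomposable vectors $v=u\otimes w$. For those it says that the bilinear map $\varphi(u,w)=T(u\otimes w)$ satisfies
\[
\varphi(u,w)\in(\mathcal A_m u)\otimes w+u\otimes(\mathcal B_d w)\subseteq U_m\otimes w+u\otimes W_d .
\]

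\emph{Case $d=1$.} Here $W_d=\C$ and $\mathfrak{so}_1=0$, so $T=T'\otimes I$ with $T'u\in\mathcal A_m u$ for all $u$. Lemma~\ref{lem:sl2-reflexive} then gives $T'\in\mathcal A_m$, and we are done.

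\emph{Case $d\ge2$.} Both factors now have dimension at least $2$, since $m=2\ell\ge1$. The key step is a splitting lemma: a bilinear map $\varphi$ with $\varphi(u,w)\in U_m\otimes w+u\otimes W_d$ for all $u,w$ has the form
\[
\varphi(u,w)=A(u)\otimes w+u\otimes B(w)
\]
for linear maps $A\in\operatorname{End}(U_m)$ and $B\in\operatorname{End}(W_d)$. The pair $(A,B)$ is unique up to $(A+cI,\,B-cI)$.

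To prove the lemma, fix $u\ne0$. For any $\xi\in U_m^*$ with $\xi(u)=0$, the contraction $(\xi\otimes1)\varphi(u,w)$ lies in $\C w$ for every $w$. A linear map sending every vector into its own line is a scalar, so $(\xi\otimes1)\varphi(u,w)=c_u(\xi)w$. Extending $c_u$ linearly from $u^\perp$ to all of $U_m^*$ yields a vector $a_u\in U_m$ with $\varphi(u,w)-a_u\otimes w\in u\otimes W_d$. This gives a pointwise decomposition $\varphi(u,w)=a_u\otimes w+u\otimes b_u(w)$ with $b_u\in\operatorname{End}(W_d)$.

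The remaining task, which I expect to be the main obstacle, is to make the choices coherent in $u$. I would compare the decompositions for linearly independent $u_1,u_2$ and for $u_1+u_2$. Because $\dim W_d\ge2$ and $\dim U_m\ge2$, the components in $u_i\otimes W_d$ and in $U_m\otimes w$ are determined up to the single scalar ambiguity $\C\,u\otimes w$. From this one gets $b_{u_1}-b_{u_2}\in\C I$ and $a_{u_1+u_2}-a_{u_1}-a_{u_2}\in\C u_1+\C u_2$ with matching scalars. After normalizing $b_u$ to a fixed $B$, the vectors $a_u$ become linear in $u$. The argument can also be run symmetrically with the roles of $U_m$ and $W_d$ exchanged, as a cross-check.

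Granting the splitting, we compare $A(u)\otimes w+u\otimes B(w)$ with the pointwise condition. Choose $u,w$ nonzero. The decomposition of an element of $U_m\otimes w+u\otimes W_d$ is unique modulo $\C\,u\otimes w$, so $A(u)\in\mathcal A_m u+\C u=\mathcal A_m u$ and $B(w)\in\mathcal B_d w$. Lemma~\ref{lem:sl2-reflexive} then gives $A\in\mathcal A_m$, and Lemma~\ref{lem:so-reflexive} gives $B\in\mathcal B_d$. Hence $T=A\otimes I+I\otimes B\in\rho(S)+\C I$. The reverse inclusion in \eqref{eq:module-reflexive} is trivial.
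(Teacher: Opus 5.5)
Your proof is correct, but it takes a different route from the paper's.

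\textbf{The paper's route.} It fixes an orthonormal basis $\varepsilon_1,\dots,\varepsilon_d$ of $W_d$ and writes $T=(T_{ji})$ in $d\times d$ blocks.
\begin{itemize}
\item Testing on $u\otimes\varepsilon_i$, and using that $\mathfrak{so}_d\varepsilon_i$ has no $\varepsilon_i$-component, gives $T_{ii}u\in\mathcal A_m u$ and $T_{ji}u\in\C u$ for $j\ne i$. Hence each diagonal block lies in $\mathcal A_m$ by Lemma~\ref{lem:sl2-reflexive}, and each off-diagonal block is a scalar.
\item Testing on $u\otimes(\varepsilon_i+\varepsilon_j)$ shows that the diagonal blocks differ by scalars.
\item This yields $T=B\otimes I+I\otimes C$ with $B\in\mathcal A_m$. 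Subtracting $B\otimes I$ and applying Lemma~\ref{lem:so-reflexive} finishes the proof.
\end{itemize}

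\textbf{Your route.} You first keep only the weaker containment $T(u\otimes w)\in U_m\otimes w+u\otimes W_d$, discarding the specific algebras. From it you prove a basis-free splitting lemma, $T(u\otimes w)=A(u)\otimes w+u\otimes B(w)$. Only then do you feed each factor to its one-factor lemma, using uniqueness of the decomposition modulo $\C\,u\otimes w$.

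\textbf{Checking your steps.}
\begin{itemize}
\item The contraction argument with $\xi\in u^\perp$ produces a pointwise pair $(a_u,b_u)$ in which $a_u$ is independent of $w$. This is the key point, and it is correct.
\item The coherence step works. Comparing $u_1,u_2,u_1+u_2$ (and scalar multiples) gives $b_u-b_{u'}\in\C I$. After normalizing to a single $B$, the vector $A(u)$ is forced by $A(u)\otimes w=T(u\otimes w)-u\otimes Bw$, so $A$ is linear.
\item The hypotheses $\dim U_m\ge2$ and $\dim W_d\ge2$ are not needed for the uniqueness you invoke. The identity $(U_m\otimes w)\cap(u\otimes W_d)=\C\,u\otimes w$ holds for any nonzero $u,w$.
\end{itemize}

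\textbf{What each approach buys.} The paper's argument is shorter because it exploits the orthogonal structure of $\mathfrak{so}_d$ from the outset. Yours is more modular and more general. It shows that for any subspaces $\mathcal A\subseteq\operatorname{End}(U)$ and $\mathcal B\subseteq\operatorname{End}(W)$ containing the identity, an operator that is locally in $\mathcal A\otimes I+I\otimes\mathcal B$ on decomposable tensors already lies in $\operatorname{Ref}(\mathcal A)\otimes I+I\otimes\operatorname{Ref}(\mathcal B)$. The cost is the extra bookkeeping needed to make $a_u$ and $b_u$ coherent in $u$.
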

\begin{proof}
Write $V_{\ell,d}=U_m\otimes W_d$ and fix a basis $\varepsilon_1,\ldots,\varepsilon_d$ of $W_d$. Let $T$ belong to the reflexive hull in \eqref{eq:module-reflexive} and write $T=(T_{ji})$ in $d\times d$ blocks, each block acting on $U_m$.

For $u\otimes\varepsilon_i$, locality gives
\[
T_{ii}u\in\mathcal A_m u,
\qquad
T_{ji}u\in\C u\quad(j\ne i).
\]
By Lemma~\ref{lem:sl2-reflexive}, $T_{ii}\in\mathcal A_m$, while the second condition implies
$T_{ji}=c_{ji}I$. Applying locality to $u\otimes(\varepsilon_i+\varepsilon_j)$ shows that
$(T_{ii}-T_{jj})u\in\C u$ for every $u$, hence $T_{ii}-T_{jj}$ is scalar. Therefore
\[
T=B\otimes I_{W_d}+I_{U_m}\otimes C
\]
for some $B\in\mathcal A_m$ and $C\in\operatorname{End}(W_d)$.

Subtract $B\otimes I$. For every non-zero $u$ and every $w$ we then have
\[
u\otimes Cw\in \mathcal A_m u\otimes w+u\otimes\mathfrak{so}_d w.
\]
Projecting to $U_m/\C u$ forces the first factor to be scalar on $u$, and hence
$Cw\in(\mathfrak{so}_d+\C I)w$. Lemma~\ref{lem:so-reflexive} now gives
$C\in\mathcal B_d$, and the result follows.
\end{proof}

\begin{lemma}\label{lem:noncentral-compatibility}
Suppose that a local derivation $\Delta$ induces the zero map on the quotient
$\g_\ell(d)/V_{\ell,d}$. If
\[
\Delta|_{V_{\ell,d}}=(\rho(b)+\lambda I)|_{V_{\ell,d}}
\]
for some $b\in S$ and $\lambda\in\C$, then $b\in Z(S)$. Thus $b=0$ for $d=1$ and for $d\ge3$, while for $d=2$ it is a scalar multiple of the generator of $\mathfrak{so}_2$.
\end{lemma}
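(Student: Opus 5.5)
The idea is to test the local derivation property on the mixed vectors $x+v$ with $x\in S$ and $v\in V_{\ell,d}$, and to use that $\Delta(x)\in V_{\ell,d}$ for $x\in S$, since $\Delta$ induces zero on the quotient. Fix $x\in S$. By \eqref{eq:local0}, for each $v$ there are $a=a_{x+v}\in\g_\ell(d)$ and $\mu\in\C$ with
\[
\Delta(x)+\rho(b)v+\lambda v=[a,x+v]+\mu\,\delta_0(x+v)=[a,x+v]+\mu v .
\]
Write $a=a_S+a_V$ with $a_S\in S$ and $a_V\in V_{\ell,d}$. Projecting onto $S$ gives $[a_S,x]=0$, so $a_S$ lies in the centralizer $\mathfrak c_S(x)$. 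The $V$-component then reads
\[
\Delta(x)+\rho(b)v+\lambda v=\rho(a_S)v-\rho(x)a_V+\mu v .
\]
Replacing $v$ by $tv$ and letting $t\to 0$ is not directly allowed, because $a_S$ depends on $t$. Instead I use linearity in $v$ in the following form. Put
\[
E_x=\rho(x)V_{\ell,d}+\Delta(x)\C .
\]
Modulo $E_x$, the vector $\rho(b-\lambda')v$ must lie in $\rho(\mathfrak c_S(x))v+\C v$ for every $v$, where the scalar $\lambda'$ absorbs $\lambda$.

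Next I choose $x$ regular semisimple. Take $x=h+\kappa s'$, with $s'$ a regular element of a Cartan subalgebra of $\mathfrak{so}_d$ and $\kappa$ generic. Then $\mathfrak c_S(x)$ is the Cartan subalgebra $\mathfrak t$, and $\rho(x)$ is diagonalizable on $V_{\ell,d}$. Its kernel is the zero-weight space $V_0$, which is nonzero only in special cases, and $\rho(x)V$ is the sum of the nonzero weight spaces. Projecting onto $V_0$ along this decomposition, we find that for every $v$ the vector $\pi_0\rho(b)v$ lies in $\pi_0\bigl(\rho(\mathfrak t)v+\C v\bigr)+\C\,\pi_0\Delta(x)$. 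Now write $b=b_{\mathfrak t}+\sum_\alpha b_\alpha$ in root components. Choose $v$ to be a weight vector of weight $-\alpha$ whose image under $\rho(b_\alpha)$ has a nonzero component in the zero-weight part, or more generally a weight $\beta$ with $\beta+\alpha$ lying in an appropriate proper subspace. Then $\rho(\mathfrak t)v+\C v$ stays in the weight space of $v$, while $\rho(b_\alpha)v$ lands in a different weight space. Letting $v$ range over a two-dimensional family kills the single extra direction $\Delta(x)$. This forces $b_\alpha=0$ for every root $\alpha$, so $b\in\mathfrak t$.

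Varying the regular element over all conjugates, i.e. replacing $\mathfrak t$ by $g\mathfrak t g^{-1}$, shows that $b$ lies in the intersection of all Cartan subalgebras of $S$, which is $Z(S)$. I expect the main obstacle to be the weight bookkeeping. When $V_0=0$ (for example $2\ell$ odd and $d=1$), the projection argument must be run on a general weight space instead. There one uses that $\rho(x)$ is invertible on the weight spaces other than $V_0$ only up to the shift by $\rho(x)a_V$. My first attempt is the cleaner route: take $x=e$, which is nilpotent, and compare the top $h$-weight components. Since $\rho(e)V$ misses the lowest-weight vectors, this gives a quotient in which only $\rho(\mathfrak c_S(e))v$ survives. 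Combining this with the same argument for $x=f$ should again force $b\in\mathfrak c_S(e)\cap\mathfrak c_S(f)\cap\ldots=Z(S)$. Finally, $Z(S)=0$ for $d=1$ and $d\ge3$, and $Z(S)=\mathfrak{so}_2$ for $d=2$, which is the stated conclusion.
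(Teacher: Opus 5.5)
Your derivation of the basic inclusion $\rho(b)v\in\rho(x)V_{\ell,d}+\rho(\mathfrak c_S(x))v+\C v$ is sound, and it is exactly the inclusion the paper uses. The extra line $\C\Delta(x)$ is unnecessary, because locality at $x$ alone gives $\Delta(x)\in\rho(x)V_{\ell,d}$. The $t$-scaling is also harmless, since $a_S\in\mathfrak c_S(x)$ for every $t$. The problems lie in how you extract $b\in Z(S)$ from this inclusion.

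\textbf{The semisimple test.} With $\kappa$ generic, as you take it, $\rho(h+\kappa s')$ is invertible whenever $m$ is odd, and also for $d=2$. Then $\rho(x)V_{\ell,d}=V_{\ell,d}$ and the inclusion says nothing, so there is nothing to ``run on a general weight space''. To get anything you would need a resonant $\kappa$ that makes $\rho(x)$ singular. For $d=1$ with $m$ odd, no semisimple test works at all.

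\textbf{The nilpotent test.} The test at $x=e$ does \emph{not} force $b\in\mathfrak c_S(e)$. Modulo $\rho(e)V_{\ell,d}=\operatorname{span}\{u_0,\dots,u_{m-1}\}\otimes W_d$, only the level $u_m\otimes W_d$ survives. The element $b=h$ passes, because the $u_m$-part of $\rho(h)v$ is $-m$ times that of $v$. What $x=e$ actually gives, with $v=u_{m-1}\otimes w$, is that the $f$-coefficient of $b$ vanishes. So the $\mathfrak{sl}_2$-component $b_1$ lies in a Borel subalgebra. The test at $x=f$ removes the $e$-coefficient. The $h$-coefficient is removed only by applying the condition, which is $\operatorname{Ad}$-equivariant, to further conjugates of $e$; then $b_1$ lies in every Borel subalgebra, hence $b_1=0$. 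This repair is routine and is what the paper does.

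\textbf{The genuine gap: $b_2$ for $d\ge3$.} Nothing in your proposal controls the $\mathfrak{so}_d$-component $b_2$. Since $\mathfrak{so}_d\subseteq\mathfrak c_S(e)$ and $\rho(b_2)v\in\rho(\mathfrak{so}_d)v$, every $b_2$ passes all tests at nilpotents of $\mathfrak{sl}_2$. Indeed $\mathfrak c_S(e)\cap\mathfrak c_S(f)$ is already all of $\mathfrak{so}_d$, and your ``$\ldots$'' would have to include elements of $\mathfrak{so}_d$ for which you give no test. Combined with the emptiness of the generic semisimple test, a case such as $d=3$ with $m$ odd is left unproved.

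\textbf{The missing idea.} Test with \emph{singular} elements of $\mathfrak{so}_d$, whose action on $V_{\ell,d}$ has a large cokernel. The paper takes $x=s_{ij}$ and $v=u\otimes w$ with $w\in E_{ij}=\operatorname{span}\{\varepsilon_i,\varepsilon_j\}$. Then $\rho(s_{ij})V_{\ell,d}=U_m\otimes E_{ij}$, and $\mathfrak c_S(s_{ij})=\mathfrak{sl}_2\oplus\C s_{ij}\oplus\mathfrak{so}(E_{ij}^\perp)$ preserves $U_m\otimes E_{ij}$. So the inclusion forces $b_2E_{ij}\subseteq E_{ij}$ for all $i<j$. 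For $d\ge3$ this gives $b_2\varepsilon_i\in\C\varepsilon_i$, and skew-symmetry then kills $b_2$. For $b_1$, the paper first projects onto a non-isotropic spatial direction $w$. This removes all $\mathfrak{so}_d$-contributions, since $w^{\mathsf T}Aw=0$ for skew $A$. It then runs the corrected $e$, $u_{m-1}$ argument together with its conjugates.
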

\begin{proof}
Write $b=b_1+b_2$ with $b_1\in\mathfrak{sl}_2$ and $b_2\in\mathfrak{so}_d$. For every $x\in S$ and $v\in V_{\ell,d}$, locality at $x+t v$ and division by $t\ne0$ imply
\begin{equation}\label{eq:compatibility-inclusion}
\rho(b)v\in \rho(x)V_{\ell,d}
+\bigl(\rho(C_S(x))+\C I\bigr)v.
\end{equation}
Indeed, the fixed vector $\Delta(x)$ belongs to $\rho(x)V_{\ell,d}$ and therefore disappears modulo that image.

Take first a rotation generator $s_{ij}$ and a vector $v=u\otimes w$ with
$w\in\operatorname{span}\{\varepsilon_i,\varepsilon_j\}$.  Both
$\rho(s_{ij})V$ and $\rho(C_S(s_{ij}))v$ have their spatial part inside the same two-dimensional plane.  Hence \eqref{eq:compatibility-inclusion} forces
$b_2\operatorname{span}\{\varepsilon_i,\varepsilon_j\}\subseteq
\operatorname{span}\{\varepsilon_i,\varepsilon_j\}$ for every $i<j$.
If $d\ge3$, this implies $b_2\varepsilon_i\in\C\varepsilon_i$ for every $i$; skew-symmetry gives $b_2=0$.  If $d=2$, no further restriction occurs and $b_2\in Z(\mathfrak{so}_2)$.

It remains to treat $b_1$.  Choose a non-isotropic spatial vector $w$ and project
\eqref{eq:compatibility-inclusion} onto the $w$-component along $w^\perp$.  For a nilpotent element $x=e$ this gives
\[
\rho_m(b_1)u\in \rho_m(e)U_m+(\C\rho_m(e)+\C I)u
\qquad(u\in U_m).
\]
Taking $u=u_{m-1}$ shows that the $f$-coefficient of $b_1$ is zero. The analogous argument with $x=f$ and $u=u_1$ removes the $e$-coefficient. More invariantly, repeating the same argument for every conjugate of $e$ shows that $b_1$ belongs to every Borel subalgebra of $\mathfrak{sl}_2$. Their intersection is zero, and hence $b_1=0$.
\end{proof}

\begin{lemma}\label{lem:range-local-gl2}
Let $\Phi:\mathcal A_m\to U_m$ be linear and suppose
\[
\Phi(A)\in A U_m\qquad(A\in\mathcal A_m).
\]
Then there exists $u\in U_m$ such that $\Phi(A)=Au$ for all $A\in\mathcal A_m$.
\end{lemma}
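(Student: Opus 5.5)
The plan is to exploit the fact that the condition $\Phi(A)\in AU_m$ only has content when $A$ is singular, and to test it on enough singular elements of $\mathcal A_m$ to pin down $\Phi$ completely. Every $A\in\mathcal A_m$ can be written $A=cI+X$ with $X\in\rho_m(\mathfrak{sl}_2)$. The elements I will use are the shifted semisimple operators
\[
A=X-(m-2k)I,
\]
where $X$ is conjugate to $h$, so that its eigenvalues are $m-2j$ for $0\le j\le m$. For such an $A$, the image $AU_m$ is the sum of all eigenspaces of $X$ except the $k$-th, which is a hyperplane.

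The first step is to normalize. Put $u:=\Phi(I)$ and replace $\Phi$ by $\Phi'(A)=\Phi(A)-Au$. The hypothesis is preserved, because $Au\in AU_m$. Now $\Phi'(I)=0$, and the claim becomes $\Phi'=0$ on $\mathcal A_m$. Since $\mathcal A_m=\operatorname{span}\{I,e,f,h\}$, it suffices to prove $\Phi'(h)=\Phi'(e)=\Phi'(f)=0$.

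The second step handles $h$. Take $A=h-(m-2k)I$ for $0\le k\le m$. Its image is $\operatorname{span}\{u_j:j\ne k\}$. Since $\Phi'(A)=\Phi'(h)$, this shows that the $u_k$-coordinate of $\Phi'(h)$ vanishes for each $k$. Hence $\Phi'(h)=0$. Equivalently, before normalization this says $\Phi(h)=h\,\Phi(I)$, which is exactly why the choice $u=\Phi(I)$ is the right one.

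The third step handles $e$ and $f$ by conjugating. For $t\in\C$ put $g_t=\exp(t\rho_m(e))$, so that
\[
g_t\,h\,g_t^{-1}=h-2t e\in\rho_m(\mathfrak{sl}_2).
\]
This element has the same eigenvalues as $h$. Applying the hypothesis to $A_{k,t}=h-2te-(m-2k)I$ and using $\Phi'(h)=\Phi'(I)=0$ gives
\[
-2t\,\Phi'(e)\in g_t\operatorname{span}\{u_j:j\ne k\}\qquad(0\le k\le m).
\]
For fixed $t\neq0$, the intersection over $k$ of the right-hand sides is $g_t\bigl(\bigcap_k\operatorname{span}\{u_j:j\ne k\}\bigr)=0$, so $\Phi'(e)=0$. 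The same argument with $\exp(t\rho_m(f))$ and $h+2tf$ gives $\Phi'(f)=0$. Therefore $\Phi(A)=Au$ for all $A\in\mathcal A_m$.

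I expect the main point needing care to be the third step. One must check that $h-2te$ really lies in $\rho_m(\mathfrak{sl}_2)$, which holds because $[e,h]=-2e$, and that its eigenvalues are exactly $m-2k$. Both facts follow from the conjugation formula. If some sign convention caused trouble there, a fallback is direct: $\Phi'(e)\in eU_m=\operatorname{span}\{u_0,\dots,u_{m-1}\}$, and one can compute the image of $h+ce-(m-2k)I$ explicitly via its eigenvectors. Note also that this lemma holds uniformly in $m$, including the case $m$ odd. So the pure-local phenomenon for odd $2\ell\ge3$ in Theorem~\ref{thm:main-intro} must come from the coupling with the Levi part, not from this range-locality.
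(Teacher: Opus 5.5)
Your proof is correct and follows essentially the same route as the paper: normalize by $u=\Phi(I)$, then shift a semisimple operator by the negative of each of its distinct eigenvalues, which kills every eigen-coordinate of the normalized map. The paper applies this to all nonzero semisimple elements of $\rho_m(\mathfrak{sl}_2)$ and uses that they span, whereas you make it explicit with $h$ and the conjugates $h-2te$ and $h+2tf$, which suffices by linearity.
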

\begin{proof}
Put $u=\Phi(I)$ and $\Psi(A)=\Phi(A)-Au$. Then $\Psi(I)=0$ and
$\Psi(A)\in AU_m$ for every $A$. Take a non-zero semisimple $x\in\mathfrak{sl}_2$.
The operator $\rho_m(x)$ is diagonalizable with the distinct eigenvalues
$m\alpha,(m-2)\alpha,\ldots,-m\alpha$ for some $\alpha\ne0$. For every $t\in\C$,
\[
\Psi(\rho_m(x))=\Psi(\rho_m(x)+tI)\in
\operatorname{Im}(\rho_m(x)+tI).
\]
Choosing $t$ successively as the negatives of all eigenvalues shows that every eigen-coordinate of
$\Psi(\rho_m(x))$ is zero. Hence $\Psi$ vanishes on every semisimple element of
$\rho_m(\mathfrak{sl}_2)$. Such elements span $\rho_m(\mathfrak{sl}_2)$, so $\Psi=0$.
\end{proof}

\begin{lemma}\label{lem:intersection-images}
For the non-trivial irreducible module $U_m$,
\[
\bigcap_{x\in\mathfrak{sl}_2}\rho_m(x)U_m=\{0\}.
\]
\end{lemma}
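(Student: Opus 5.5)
We need to show that $\bigcap_{x\in\mathfrak{sl}_2}\rho_m(x)U_m=\{0\}$ for $m\ge1$. The plan is to intersect only three images, $\rho_m(e)U_m$, $\rho_m(f)U_m$ and $\rho_m(h)U_m$ (or a suitable conjugate), and show that already this smaller intersection is zero. Since the full intersection is contained in any sub-intersection, this suffices.

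\emph{Images of $e$ and $f$.} By \eqref{eq:Um-action}, $e u_k=k u_{k-1}$, so
\[
\rho_m(e)U_m=\operatorname{span}\{u_0,\ldots,u_{m-1}\},
\]
because $k\neq0$ for $1\le k\le m$. Similarly, $f u_k=(m-k)u_{k+1}$ with $m-k\ne0$ for $0\le k\le m-1$, so
\[
\rho_m(f)U_m=\operatorname{span}\{u_1,\ldots,u_m\}.
\]
The intersection of these two images is therefore $\operatorname{span}\{u_1,\ldots,u_{m-1}\}$. For $m=1$ this is already zero, and we are done.

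\emph{Removing the middle weights for $m\ge2$.} The remaining task is to eliminate $u_1,\ldots,u_{m-1}$ using a third element. The image of $h$ is $\operatorname{span}\{u_k: m-2k\ne0\}$, which does not help, so I use a conjugate of $e$ instead. Take $g=\exp(\operatorname{ad} f)$ and $x=g\cdot e$. Then
\[
\rho_m(x)U_m=\exp(\rho_m(f))\,\rho_m(e)U_m=\exp(\rho_m(f))\operatorname{span}\{u_0,\ldots,u_{m-1}\}.
\]
This is the hyperplane annihilated by the functional $\phi=u_m^*\circ\exp(-\rho_m(f))$. Now $\exp(-\rho_m(f))$ raises the index $k$, and every $u_j$ with $j\le m$ reaches $u_m$ with a nonzero coefficient, namely a product of factors $(m-k)$ times $(-1)^{m-j}/(m-j)!$. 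Hence $\phi(u_j)\ne0$ for every $j$.

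\emph{Conclusion.} It then suffices to check that the hyperplane $\ker\phi$ does not contain the whole span $\operatorname{span}\{u_1,\ldots,u_{m-1}\}$. That alone is not enough, though: intersecting an $(m-1)$-dimensional space with one hyperplane only lowers the dimension to $m-2$. So I take the whole family $x_t=\exp(t\,\operatorname{ad} f)\cdot e$, whose images are $\ker\phi_t$ with
\[
\phi_t(u_j)=c_j t^{m-j},\qquad c_j\ne0.
\]
A vector $v=\sum_{j=1}^{m-1}a_ju_j$ lying in all the images must satisfy $\sum_j a_jc_jt^{m-j}=0$ for all $t$. The monomials $t^{m-j}$ are distinct, so every $a_j=0$, which gives $v=0$.

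Equivalently, the conjugates of $e$ are exactly the nilpotent elements, and their images are the hyperplanes orthogonal to the highest-weight vectors of the dual module. The intersection of all their images is the annihilator of the span of the dual rational normal curve, and that curve spans the whole dual space.

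The main obstacle is the bookkeeping showing that every $c_j\ne0$. This follows directly from the formula $\exp(-\rho_m(f))u_j=\sum_{r}\binom{m-j}{r}(-1)^r u_{j+r}$ up to positive factors, since $\rho_m(f)^r u_j$ is a nonzero multiple of $u_{j+r}$.
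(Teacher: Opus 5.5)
Your proof is correct and is essentially the paper's argument: the paper also intersects the images of the conjugates $x(t)=\operatorname{Ad}(\exp(tf))e=e-th-t^2f$, whose images are annihilated by $\varphi_t(u_j)=(-t)^{m-j}$ (exactly your $\phi_t$, with $c_j=(-1)^{m-j}$), and concludes because these functionals span $U_m^*$. Your preliminary intersection of $\rho_m(e)U_m$ and $\rho_m(f)U_m$ is harmless but unnecessary: the monomials $t^{m-j}$ for $0\le j\le m$ are distinct, so the $t$-family alone already kills every coordinate, including those of $u_0$ and $u_m$.
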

\begin{proof}
It is enough to intersect the images of the conjugates of $e$.  For
$x(t)=e-th-t^2f=\operatorname{Ad}(\exp(tf))e$, a left annihilator of
$\rho_m(x(t))U_m$ is the functional
\[
\varphi_t(u_j)=(-t)^{m-j}\qquad(0\le j\le m).
\]
As $t$ varies, these functionals span $U_m^*$. Hence the intersection of their kernels, and therefore the intersection of the images, is zero.
\end{proof}

After the preceding reductions, the only possible new local maps are local coboundaries.

\begin{proposition}\label{prop:pure-local-part}
After subtracting a derivation, every local derivation on $\g_\ell(d)$ can be written in the form
\begin{equation}\label{eq:pure-local-form}
\Delta(V_{\ell,d})=0,
\qquad
\Delta(x)=F(x)\in V_{\ell,d}\quad(x\in S),
\end{equation}
where $F:S\to V_{\ell,d}$ is linear and satisfies
\begin{equation}\label{eq:range-local-condition}
F(x)\in\rho(x)V_{\ell,d}\qquad(x\in S).
\end{equation}
Conversely, every linear map satisfying \eqref{eq:range-local-condition} defines a local derivation by \eqref{eq:pure-local-form}.
\end{proposition}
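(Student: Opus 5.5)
We prove Proposition~\ref{prop:pure-local-part} by subtracting derivations in three stages: first on the reductive quotient, then on the radical, and finally by removing the mixed component. Let $\Delta$ be a local derivation and put $V=V_{\ell,d}$.

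\textbf{Step 1: the reductive part.} Every derivation preserves the characteristic ideal $V$, so $\Delta(V)\subseteq V$. Hence $\Delta$ induces a linear map $\bar\Delta$ on $\g_\ell(d)/V\cong S$. This $\bar\Delta$ is a local derivation of $S$, because each $D_x$ induces a derivation of the quotient. By Lemma~\ref{lem:local-on-S}, $\bar\Delta$ is a derivation of $S$, and we must lift it to $\g_\ell(d)$.

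If $d\neq2$, then $\bar\Delta=\ad a$ with $a\in S$, and we subtract $\ad a$. If $d=2$, then $\bar\Delta=\ad a+\mu\vartheta$. The map $\vartheta$ lifts to a derivation only if the quotient-level $s$-coefficient is realizable. Here the pointwise form \eqref{eq:local0} helps: $\Delta(s)=[a_s,s]+\lambda_s\delta_0(s)$ has no $s$-component, so $\mu=0$ and $\bar\Delta$ is inner. After subtracting, $\Delta(S)\subseteq V$.

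\textbf{Step 2: the radical.} For $v\in V$, \eqref{eq:local0} gives $\Delta(v)=[a_v,v]+\lambda_v v\in(\rho(S)+\C I)v$, because $V$ is abelian. So $\Delta|_V$ lies in the reflexive hull, and Proposition~\ref{prop:module-reflexive} gives $\Delta|_V=\rho(b)+\lambda I$. Lemma~\ref{lem:noncentral-compatibility} applies, since the induced map on the quotient is now zero, and yields $b\in Z(S)$. For $d=2$ we subtract $\lambda\delta_0+\ad b$ with $b=cs$. Since $s$ is central in $S$, $\ad(cs)$ vanishes on $S$, so $\Delta(S)\subseteq V$ is preserved. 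For $d\neq2$ we simply subtract $\lambda\delta_0$. Afterwards $\Delta(V)=0$ and $F:=\Delta|_S$ maps $S$ into $V$.

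\textbf{Step 3: the range condition.} For $x\in S$ write $a_x=y+w$ with $y\in S$ and $w\in V$. Then
\[
F(x)=[y,x]+[w,x]+\lambda_x\delta_0(x)=[y,x]-\rho(x)w,
\]
using $\delta_0(x)=0$. Projecting onto $V$ kills the term $[y,x]\in S$, so $F(x)=-\rho(x)w\in\rho(x)V$, which is \eqref{eq:range-local-condition}.

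\textbf{Converse.} Given $F$ satisfying \eqref{eq:range-local-condition}, define $\Delta$ by \eqref{eq:pure-local-form}. For $z=x+v$ with $x\in S$ and $v\in V$, we have $\Delta(z)=F(x)$. Choose $w\in V$ with $F(x)=-\rho(x)w$. Then
\[
\ad w(z)=[w,x]=-\rho(x)w=F(x),
\]
since $V$ is abelian. So $\Delta(z)=\ad w(z)$ and $\Delta$ is local. Linearity of $\Delta$ is clear.

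The main obstacle is Step 1 for $d=2$. There we must check that the $\vartheta$-component cannot occur, or else absorb it by subtracting a combination of $\delta_0$ and $\ad s$. We handle this through the observation that no derivation of $\g_\ell(2)$ produces an $s$-component on $s$ itself.
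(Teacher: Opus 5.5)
Your proposal is correct and follows essentially the same route as the paper: you lift the quotient local derivation via Lemma~\ref{lem:local-on-S}, killing the $\vartheta$-direction for $d=2$ by locality at $s$. You then normalize the radical part with Proposition~\ref{prop:module-reflexive} and Lemma~\ref{lem:noncentral-compatibility}, read off the range condition from the pointwise form, and prove the converse with $\ad w$. Your explicit choice of the subtracted derivation $\lambda\delta_0+\ad(cs)$, which vanishes on all of $S$, is a slightly cleaner version of the paper's step; your closing aside about absorbing $\vartheta$ by $\delta_0$ and $\ad s$ could not work, but your argument never uses it.
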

\begin{proof}
Since $V_{\ell,d}$ is characteristic, every derivation preserves it; hence every local derivation satisfies $\Delta(V_{\ell,d})\subseteq V_{\ell,d}$ and induces a linear map on the quotient. Let $\pi:S\ltimes V_{\ell,d}\to S$ be the quotient map.  For $x\in S$, a pointwise derivation realizing $\Delta(x)$ induces a derivation of $S$ realizing $\pi\Delta(x)$.  Thus $\pi\Delta|_S$ is a local derivation of $S$, and Lemma~\ref{lem:local-on-S} shows that it is an ordinary derivation.

When $d\ne2$, this quotient derivation is inner and therefore lifts to an inner derivation of $\g_\ell(d)$.  When $d=2$, Lemma~\ref{lem:local-on-S} gives
\[
\pi\Delta|_S=\ad a+\mu\vartheta,
\qquad
\vartheta|_{\mathfrak{sl}_2}=0,
\quad
\vartheta(s)=s.
\]
The scaling direction $\vartheta$ does not lift to $\g_\ell(2)$, but it cannot occur here.  Indeed, locality at $s$ gives
\[
\pi\Delta(s)=\pi D_s(s)=0,
\]
because Theorem~\ref{thm:der0} shows that every derivation of $\g_\ell(2)$ induces an inner derivation on the quotient and hence kills the central class of $s$.  Therefore $\mu=0$.  In every dimension the quotient derivation is consequently liftable; subtracting such a lift, we may assume that the induced map on the reductive quotient is zero.

By Proposition~\ref{prop:module-reflexive}, the restriction to $V_{\ell,d}$ has the form
$\rho(b)+\lambda I$. Lemma~\ref{lem:noncentral-compatibility} shows that $b$ is central in $S$. Therefore $\rho(b)+\lambda I$ is itself the restriction of a derivation which is zero on the quotient; subtracting it gives \eqref{eq:pure-local-form}.

For $x\in S$, the pointwise derivation attached to $x$ has zero reductive component at $x$, and hence its radical contribution is $-\rho(x)u_x$ for some $u_x\in V$. This proves \eqref{eq:range-local-condition}.

Conversely, if \eqref{eq:range-local-condition} holds, choose $u_x$ with
$F(x)=-\rho(x)u_x$. For any $x+v\in S\ltimes V$ one has
\[
\operatorname{ad}(u_x)(x+v)=-\rho(x)u_x=F(x)=\Delta(x+v),
\]
because $V$ is abelian. Thus $\Delta$ is local.
\end{proof}

\begin{lemma}\label{lem:two-plane-rigidity}
Assume $d\ge2$ and let $F:S\to U_m\otimes W_d$ be linear with
\[
F(x)\in\rho(x)(U_m\otimes W_d)\qquad(x\in S).
\]
Fix $1\le i<j\le d$, put
$E_{ij}=\operatorname{span}\{\varepsilon_i,\varepsilon_j\}$, and let
$\pi_{ij}:U_m\otimes W_d\to U_m\otimes E_{ij}$ be the natural projection.  Then there exists
$u_{ij}\in U_m\otimes E_{ij}$ such that
\begin{equation}\label{eq:two-plane-coboundary}
\pi_{ij}F(a+t s_{ij})
=\rho(a+t s_{ij})u_{ij}
\qquad(a\in\mathfrak{sl}_2,\ t\in\C).
\end{equation}
\end{lemma}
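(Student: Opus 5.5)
The plan is to reduce to a range-local problem on the subalgebra $S_{ij}=\mathfrak{sl}_2\oplus\C s_{ij}$ acting on $U_m\otimes E_{ij}$, and then to apply the $\mathfrak{gl}_2$-type rigidity of Lemma~\ref{lem:range-local-gl2} componentwise.

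\emph{Step 1: restriction.} Since $s_{ij}$ preserves $U_m\otimes E_{ij}$ and kills the complementary spatial directions, every $x=a+ts_{ij}$ maps $U_m\otimes E_{ij}$ into itself and its orthogonal complement into itself. Hence $\pi_{ij}\rho(x)=\rho(x)\pi_{ij}$, and the hypothesis gives
\[
G(x):=\pi_{ij}F(x)\in\rho(x)(U_m\otimes E_{ij})\qquad(x\in S_{ij}).
\]

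\emph{Step 2: diagonalize the rotation.} In $E_{ij}$ I pass to the isotropic basis $\varepsilon_\pm=\varepsilon_i\pm\mathrm i\varepsilon_j$, on which $s=-\mathrm i s_{ij}$ acts by $\pm1$. This splits $U_m\otimes E_{ij}=U_m^+\oplus U_m^-$ into two copies of $U_m$. On these, $a+ts$ acts by $\rho_m(a)\pm t I$, and the action preserves the splitting, so the range condition holds separately for each component $G^\pm$. Identifying $a+ts\mapsto\rho_m(a)+tI$ gives an isomorphism $S_{ij}\to\mathcal A_m$, since $\rho_m$ is faithful for $m\ge1$. The map $G^+$ then becomes a linear map $\Phi^+:\mathcal A_m\to U_m$ with $\Phi^+(A)\in AU_m$. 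For $G^-$, I use the automorphism $a+ts\mapsto a-ts$ of $S_{ij}$, which gives $\Phi^-(A)\in AU_m$ in the same way.

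\emph{Step 3: apply Lemma~\ref{lem:range-local-gl2}.} The lemma yields $u^\pm\in U_m$ with $\Phi^\pm(A)=Au^\pm$. Setting $u_{ij}=u^+\otimes\varepsilon_++u^-\otimes\varepsilon_-$ and reassembling the components gives \eqref{eq:two-plane-coboundary}.

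The main obstacle is Step 2. One must check that the rotation really produces the full scalar direction $\C I$ on each isotropic string, so that the image of $S_{ij}$ on each string is exactly $\mathcal A_m$ and not a proper subspace. One must also check that the projection $\pi_{ij}$ preserves the range condition; this is the commutation observation in Step 1. The degenerate cases need a separate look: for $m=0$, i.e.\ $\ell=0$, the lemma is still fine since $\mathcal A_0=\C I$. For $d=2$, $\pi_{ij}$ is simply the identity.
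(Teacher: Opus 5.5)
Your proposal is correct and is essentially the paper's argument: you commute $\pi_{ij}$ with $\rho(a+ts_{ij})$, split $U_m\otimes E_{ij}$ along the isotropic eigenvectors $\varepsilon_i\pm\mathrm i\varepsilon_j$ of the rotation, identify $(a,t)\mapsto\rho_m(a)\pm ctI$ with $\mathcal A_m$, apply Lemma~\ref{lem:range-local-gl2} to each component, and reassemble $u_{ij}$. For bijectivity onto $\mathcal A_m$ you also need $I\notin\rho_m(\mathfrak{sl}_2)$, which follows immediately from comparing traces, and your $m=0$ remark is unnecessary because $\ell\ge\frac12$.
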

\begin{proof}
The operator $\rho(a+t s_{ij})$ preserves the direct sum
\[
U_m\otimes W_d=(U_m\otimes E_{ij})\oplus(U_m\otimes E_{ij}^{\perp}).
\]
Choose eigenvectors
$w_\pm=\varepsilon_i\pm\mathrm i\varepsilon_j$ of $s_{ij}$, with eigenvalues
$\pm c$, where $c\neq0$.  Write
\[
\pi_{ij}F(a+t s_{ij})
=\Phi_+(a,t)\otimes w_+ +\Phi_-(a,t)\otimes w_-.
\]
Range locality gives
\[
\Phi_\pm(a,t)\in
\bigl(\rho_m(a)\pm ctI\bigr)U_m.
\]
Since $(a,t)\mapsto\rho_m(a)\pm ctI$ identifies
$\mathfrak{sl}_2\oplus\C$ with
$\mathcal A_m=\rho_m(\mathfrak{sl}_2)+\C I$, Lemma~\ref{lem:range-local-gl2} yields vectors
$u_\pm\in U_m$ such that
\[
\Phi_\pm(a,t)=\bigl(\rho_m(a)\pm ctI\bigr)u_\pm.
\]
Taking
$u_{ij}=u_+\otimes w_++u_-\otimes w_-$ proves \eqref{eq:two-plane-coboundary}.
\end{proof}

\begin{theorem}\label{thm:local-noncentral-dge2}
If $d\ge2$, then
\begin{equation}\label{eq:locder-noncentral-dge2}
\LocDer\g_\ell(d)=\Der\g_\ell(d)
\qquad\left(\ell\in\tfrac12\mathbb N\right).
\end{equation}
\end{theorem}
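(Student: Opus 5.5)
By Proposition~\ref{prop:pure-local-part}, after subtracting a derivation we may assume $\Delta(V_{\ell,d})=0$ and $\Delta|_S=F$. Here $F:S\to V_{\ell,d}$ is linear and satisfies the range-local condition $F(x)\in\rho(x)V_{\ell,d}$. It then suffices to find a single $u\in V_{\ell,d}$ with $F(x)=\rho(x)u$ for all $x\in S$. Indeed, $\Delta$ then coincides with $-\ad u$: this vanishes on the abelian ideal $V_{\ell,d}$ and equals $\rho(x)u$ on $S$, so $\Delta$ is inner up to sign. The task therefore reduces to showing that range-local maps $S\to U_m\otimes W_d$ are coboundaries whenever $d\ge2$.

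\textbf{Local coboundaries on each plane.} For each pair $i<j$, Lemma~\ref{lem:two-plane-rigidity} gives $u_{ij}\in U_m\otimes E_{ij}$ with $\pi_{ij}F(a+ts_{ij})=\rho(a+ts_{ij})u_{ij}$ for all $a\in\mathfrak{sl}_2$ and $t\in\C$. Let $u^{(ij)}_i,u^{(ij)}_j\in U_m$ denote the $\varepsilon_i$- and $\varepsilon_j$-components of $u_{ij}$.

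\textbf{Consistency of the $\mathfrak{sl}_2$-part.} Restricting to $a\in\mathfrak{sl}_2$, the $\varepsilon_i$-component of $F(a)$ equals $\rho_m(a)u^{(ij)}_i$, and this does not depend on $j$. Since $\rho_m$ is injective on $U_m$ for the nontrivial module ($m\ge1$), no nonzero vector is killed by all of $\mathfrak{sl}_2$. Hence $u^{(ij)}_i=u^{(ik)}_i=:u_i$ for every $j,k$. Put $u=\sum_i u_i\otimes\varepsilon_i$. Then $F(a)=\rho(a)u$ on every coordinate $\varepsilon_i$, and every coordinate is covered because $d\ge2$ guarantees that each index lies in some pair.

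\textbf{Rotation generators.} Replace $F$ by $F-\rho(\cdot)u$. The new map is still range-local and now vanishes on $\mathfrak{sl}_2$, and in addition $\pi_{ij}F(s_{ij})=0$. For $d=2$ this already finishes the proof. For $d\ge3$, the remaining task is to show that the components of $F(s_{ij})$ along $\varepsilon_k$ with $k\notin\{i,j\}$ vanish. To do this, use range-locality at the elements $x=a+s_{ij}$ with $a\in\mathfrak{sl}_2$ generic semisimple. Then $F(x)=F(s_{ij})$, and on $U_m\otimes E_{ij}^\perp$ the operator $\rho(x)$ acts as $\rho_m(a)\otimes I$. Its image is therefore $\rho_m(a)U_m\otimes E_{ij}^\perp$, and intersecting over all $a$ gives zero by Lemma~\ref{lem:intersection-images}. (One could instead take $a$ a conjugate of $e$, exactly as in that lemma.) This shows $F(s_{ij})\in U_m\otimes E_{ij}$, and combined with $\pi_{ij}F(s_{ij})=0$ it yields $F(s_{ij})=0$. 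Since the $s_{ij}$ span $\mathfrak{so}_d$, we get $F=0$.

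The step I expect to be most delicate is the consistency argument. One has to check that the vectors $u_{ij}$ supplied by Lemma~\ref{lem:two-plane-rigidity} are uniquely determined on each coordinate by the $\mathfrak{sl}_2$-data. This holds because $\bigcap_{a}\ker\rho_m(a)=0$. One also has to check that the $s_{ij}$-part of $u_{ij}$ agrees with the coordinates already fixed by $\mathfrak{sl}_2$. It does, because \eqref{eq:two-plane-coboundary} holds for the combined argument $a+ts_{ij}$ with one and the same $u_{ij}$, so no extra freedom remains.
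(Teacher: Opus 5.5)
Your proof is correct and follows the paper's argument essentially step for step: the reduction via Proposition~\ref{prop:pure-local-part}, the plane-wise coboundaries from Lemma~\ref{lem:two-plane-rigidity}, gluing them using the absence of $\mathfrak{sl}_2$-invariant vectors, subtracting the coboundary, and then killing the off-plane components of $F(s_{ij})$ with Lemma~\ref{lem:intersection-images}. One slip in the last step: do not restrict to generic semisimple $a$, because for odd $m$ every nonzero semisimple element acts invertibly on $U_m$ and those images are all of $U_m$. You need the relation for all $a$, in particular the nilpotent conjugates of $e$, and this is available because $F(a+s_{ij})=F(s_{ij})$ for every $a\in\mathfrak{sl}_2$.
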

\begin{proof}
By Proposition~\ref{prop:pure-local-part}, it is enough to classify the linear maps
$F:S\to U_m\otimes W_d$ satisfying the range-local condition.

First let $d=2$.  Then
$S=\mathfrak{sl}_2\oplus\C s_{12}$, so Lemma~\ref{lem:two-plane-rigidity} applies to the whole space and gives
\[
F(x)=\rho(x)u_{12}\qquad(x\in S).
\]
Thus $F$ is an ordinary coboundary.

Assume now $d\ge3$.  Write
\[
F(a)=\sum_{r=1}^d F_r(a)\otimes\varepsilon_r
\qquad(a\in\mathfrak{sl}_2).
\]
For each pair $i<j$, Lemma~\ref{lem:two-plane-rigidity} gives
$u_{ij}\in U_m\otimes E_{ij}$ with
\[
\pi_{ij}F(a)=\rho(a)u_{ij}
\qquad(a\in\mathfrak{sl}_2).
\]
Write the $\varepsilon_i$-component of $u_{ij}$ as $u_i^{(j)}$.  If $j$ and $k$ are distinct from $i$, then
\[
\rho_m(a)u_i^{(j)}=F_i(a)=\rho_m(a)u_i^{(k)}
\qquad(a\in\mathfrak{sl}_2).
\]
Hence $u_i^{(j)}-u_i^{(k)}$ is annihilated by all of $\mathfrak{sl}_2$.  The irreducible non-trivial module $U_m$ has no invariant vector, so these vectors coincide.  Denote the common value by $u_i$ and put
\[
u_0=\sum_{i=1}^d u_i\otimes\varepsilon_i.
\]
Then
\[
F(a)=\rho(a)u_0\qquad(a\in\mathfrak{sl}_2).
\]
Subtract the coboundary $x\mapsto\rho(x)u_0$.  We may therefore assume
$F(\mathfrak{sl}_2)=0$.

Fix $i<j$.  Applying Lemma~\ref{lem:two-plane-rigidity} to the normalized map gives a vector
$u_{ij}$ satisfying \eqref{eq:two-plane-coboundary}.  Setting $t=0$ yields
$\rho(a)u_{ij}=0$ for every $a\in\mathfrak{sl}_2$, hence $u_{ij}=0$.  Consequently
\[
\pi_{ij}F(s_{ij})=0.
\]
For $k\notin\{i,j\}$, take the $\varepsilon_k$-component in the range-local relation for
$a+t s_{ij}$. Since $s_{ij}\varepsilon_k=0$ and $F(a)=0$, one obtains
\[
F_k(s_{ij})\in\rho_m(a)U_m
\qquad(a\in\mathfrak{sl}_2).
\]
Lemma~\ref{lem:intersection-images} therefore gives $F_k(s_{ij})=0$.  Hence
$F(s_{ij})=0$.  The generators $s_{ij}$ span $\mathfrak{so}_d$, so $F=0$ after subtraction of the coboundary.  Proposition~\ref{prop:pure-local-part} completes the proof.
\end{proof}

It remains to classify the one-dimensional spatial case.  Then $S=\mathfrak{sl}_2$ and
$V_{\ell,1}=U_m$.

\begin{proposition}\label{prop:sl2-local-coboundaries}
Let
\[
\mathcal R_m=
\{F\in\operatorname{Hom}(\mathfrak{sl}_2,U_m):F(x)\in\rho_m(x)U_m\text{ for all }x\}.
\]
Then
\begin{equation}\label{eq:R-m-classification}
\mathcal R_m\cong
\begin{cases}
U_m, & m\text{ even},\\
U_m, & m=1,\\
U_m\oplus U_{m-2}, & m\ge3\text{ odd}.
\end{cases}
\end{equation}
The summand $U_m$ is exactly the space of ordinary coboundaries
$x\mapsto\rho_m(x)u$.
\end{proposition}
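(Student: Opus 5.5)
The plan is to exploit the $\mathfrak{sl}_2$-equivariance of the range-local condition. Put $\mathcal H=\operatorname{Hom}(\mathfrak{sl}_2,U_m)$ with the action $(g\cdot F)(x)=\rho_m(g)F(\operatorname{Ad}(g)^{-1}x)$ of $SL_2(\C)$, or infinitesimally $(y\cdot F)(x)=\rho_m(y)F(x)-F([y,x])$. Since $\rho_m(\operatorname{Ad}(g)x)U_m=\rho_m(g)\rho_m(x)U_m$, the set $\mathcal R_m$ is an $SL_2$-stable linear subspace, hence an $\mathfrak{sl}_2$-submodule of
\[
\mathcal H\cong U_2\otimes U_m\cong U_{m+2}\oplus U_m\oplus U_{m-2}\qquad(m\ge2),
\]
and $\mathcal H\cong U_3\oplus U_1$ for $m=1$. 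Each summand occurs with multiplicity one, so $\mathcal R_m$ is a direct sum of some of these summands. The coboundaries $x\mapsto\rho_m(x)u$ form a copy of $U_m$ lying in $\mathcal R_m$; the map $u\mapsto(x\mapsto\rho_m(x)u)$ is injective because $U_m$ has no invariant vector. It therefore remains to decide, case by case, whether $U_{m+2}$ and $U_{m-2}$ lie in $\mathcal R_m$.

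Every nonzero $x\in\mathfrak{sl}_2$ is either nilpotent, and then conjugate to a multiple of $e$, or semisimple, and then conjugate to a multiple of $h$. Hence an $SL_2$-stable subspace lies in $\mathcal R_m$ if and only if each of its elements satisfies the two conditions
\[
F(e)\in\rho_m(e)U_m=\operatorname{span}\{u_0,\dots,u_{m-1}\},\qquad F(h)\in\rho_m(h)U_m .
\]
For the nilpotent condition, the $u_m$-coordinate of $F(e)$ is a weight component of $F$ of weight $-m-2$, which occurs only in $U_{m+2}$. So $U_m\oplus U_{m-2}$ satisfies the condition at $e$. Conversely, any submodule with a nonzero $U_{m+2}$-component contains, after translating by a suitable group element, an element whose lowest-weight $U_{m+2}$-coordinate is nonzero, and this violates the condition at $e$. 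Thus $U_{m+2}\not\subseteq\mathcal R_m$ in all cases.

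If $m$ is odd, $\rho_m(h)$ has eigenvalues $m-2k$, all odd and hence nonzero, so $\rho_m(h)$ is invertible and the semisimple condition is vacuous. This gives $\mathcal R_m=U_m\oplus U_{m-2}$ for odd $m\ge3$, and $\mathcal R_1=U_1$ because $U_{-1}$ does not occur.

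If $m$ is even, $\rho_m(h)U_m$ is the complement of $\C u_{m/2}$, so the extra condition says that the $u_{m/2}$-coordinate of $F(h)$ vanishes. This coordinate is read off from the weight-zero part of $F$. The weight-zero subspace of $\mathcal H$ is three-dimensional: $F(h)=a\,u_{m/2}$, $F(e)=b\,u_{m/2-1}$, $F(f)=c\,u_{m/2+1}$. It contains exactly one weight-zero line from each of $U_{m+2}$, $U_m$ and $U_{m-2}$. The coboundary line has $a=0$, since $\rho_m(h)u_{m/2}=0$. The main obstacle is to show that the weight-zero vector of $U_{m-2}$ has $a\neq0$. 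Since $\{a=0\}$ is a hyperplane in this three-dimensional space containing the $U_m$-line, it suffices to exhibit the $U_{m-2}$-line explicitly and check that it does not lie in this hyperplane. I would locate this line as the joint kernel of the Casimir shifted to eigenvalue $(m-2)m/2$; equivalently, it is the weight-zero vector of a highest-weight vector $F_0$ of weight $m-2$, where $e\cdot F_0=0$ gives a small linear system. That highest-weight vector is $F_0(h)=\alpha u_1$, $F_0(e)=\beta u_0$, $F_0(f)=\gamma u_2$, with $\rho_m(e)F_0(x)=F_0([e,x])$. I would then apply $f^{(m-2)/2}$ and compute its $a$-coefficient, which should be a nonzero product of factorials. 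This shows $U_{m-2}\not\subseteq\mathcal R_m$ for even $m$, so $\mathcal R_m=U_m$. In the edge case $m=2$ the summand $U_0$ is trivial, and the check reduces to verifying that the invariant map $x\mapsto x\in U_2\cong\mathfrak{sl}_2$ fails the condition at $h$.
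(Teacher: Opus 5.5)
Your proposal is correct. Its skeleton is the paper's: $\mathcal R_m$ is an invariant subspace of the multiplicity-free module $U_2\otimes U_m\cong U_{m+2}\oplus U_m\oplus U_{m-2}$, the coboundaries form the middle summand, and $U_{m+2}$ is excluded by an extreme weight vector (you use the lowest weight vector tested at $e$, the paper the highest weight vector tested at $f$). The routes differ in how the $U_{m-2}$ summand is decided. You use $SL_2$-conjugacy to reduce every test element to $e$ or $h$ and then argue by weights. For odd $m$ this makes $U_{m-2}\subseteq\mathcal R_m$ automatic: the condition at $e$ only detects the weight $-m-2$ component, which lies in $U_{m+2}$, and $\rho_m(h)$ is invertible. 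The paper instead checks that the highest weight vector $\Psi_0$ passes the test along the nilpotent family $e-th-t^2f$ and in the limiting direction $f$, by evaluating the annihilating functionals $\varphi_t$; here $\Psi_0$ is your $F_0$, which after solving $e\cdot F_0=0$ is $F_0(e)=u_0$, $F_0(h)=-2u_1$, $F_0(f)=-u_2$. Your version is shorter and explains why only the top summand can fail at nilpotent elements. For even $m$ the trade-off reverses. The paper tests $\Psi_0$ itself at the semisimple element $e+f$, using an explicit left null vector of $\rho_m(e+f)$, so it never leaves the highest weight. Testing at $h$ (to which $e+f$ is conjugate up to scalar) forces you to descend to weight zero. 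The computation you left as ``should be'' does close. Write $(f^j\cdot F_0)(h)=\alpha_j u_{j+1}$ and $(f^j\cdot F_0)(f)=\gamma_j u_{j+2}$. The rules $(f\cdot F)(h)=\rho_m(f)F(h)-2F(f)$ and $(f\cdot F)(f)=\rho_m(f)F(f)$ give $\alpha_{j+1}=(m-j-1)\alpha_j-2\gamma_j$ and $\gamma_{j+1}=(m-j-2)\gamma_j$, with $\alpha_0=-2$ and $\gamma_0=-1$. Hence $\alpha_j=2\gamma_j=-2(m-2)!/(m-2-j)!$. At $j=m/2-1$ the $u_{m/2}$-coefficient is therefore $-2(m-2)!/(m/2-1)!\neq0$, which also covers your edge case $m=2$.
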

\begin{proof}
The space $\mathcal R_m$ is invariant under the natural $\mathfrak{sl}_2$-action on
$\operatorname{Hom}(\mathfrak{sl}_2,U_m)$. By the Clebsch--Gordan decomposition,
\begin{equation}\label{eq:CG}
\operatorname{Hom}(\mathfrak{sl}_2,U_m)
\cong U_2\otimes U_m
\cong U_{m+2}\oplus U_m\oplus U_{m-2},
\end{equation}
where the last summand is omitted for $m=1$. The coboundaries form the middle summand $U_m$.

A highest weight vector of the $U_{m+2}$-summand is the map
\[
F_+(e)=F_+(h)=0,
\qquad F_+(f)=u_0.
\]
It is not range-local because $u_0\notin\rho_m(f)U_m$. Hence $U_{m+2}$ is excluded.

For $m\ge2$, a highest weight vector of the $U_{m-2}$-summand is
\begin{equation}\label{eq:Psi0}
\Psi_0(e)=u_0,
\qquad
\Psi_0(h)=-2u_1,
\qquad
\Psi_0(f)=-u_2.
\end{equation}
If $m$ is even, put $x=e+f$. The operator $\rho_m(x)$ has one-dimensional cokernel. A left null vector
$\phi$ may be normalized by
\[
\phi(u_0)=1,
\qquad
\phi(u_2)=-\frac1{m-1},
\]
with the remaining even coefficients determined recursively from
$k\phi(u_{k-1})+(m-k)\phi(u_{k+1})=0$ and all odd coefficients zero. Consequently
\[
\phi(\Psi_0(e+f))
=\phi(u_0-u_2)=1+\frac1{m-1}\ne0.
\]
Thus $\Psi_0(e+f)\notin\rho_m(e+f)U_m$, and the $U_{m-2}$-summand is excluded when $m$ is even.

Now let $m\ge3$ be odd. Every non-zero semisimple element of $\mathfrak{sl}_2$ acts invertibly on $U_m$, so only nilpotent elements need to be checked. Up to conjugacy and scalar, they are
\[
x(t)=e-th-t^2f.
\]
A left annihilator of $\rho_m(x(t))U_m$ is the functional from Lemma~\ref{lem:intersection-images},
$\varphi_t(u_j)=(-t)^{m-j}$. Using \eqref{eq:Psi0},
\[
\Psi_0(x(t))=u_0+2t u_1+t^2u_2
\]
and therefore
\[
\varphi_t(\Psi_0(x(t)))
=(-t)^m+2t(-t)^{m-1}+t^2(-t)^{m-2}=0.
\]
The limiting nilpotent direction $f$ is immediate as well. Hence $\Psi_0\in\mathcal R_m$ for odd $m$. By invariance, the entire $U_{m-2}$-summand is contained in $\mathcal R_m$. Together with \eqref{eq:CG}, this proves \eqref{eq:R-m-classification}.
\end{proof}

\begin{theorem}\label{thm:local-noncentral}
Let $m=2\ell$.
\begin{enumerate}[label=\textup{(\roman*)}]
\item If $d\ge2$, then
\[
\LocDer\g_\ell(d)=\Der\g_\ell(d).
\]
\item If $d=1$ and either $m$ is even or $m=1$, then
\[
\LocDer\g_\ell(1)=\Der\g_\ell(1).
\]
\item If $d=1$ and $m\ge3$ is odd, then
\begin{equation}\label{eq:locder-exception}
\LocDer\g_\ell(1)
=\Der\g_\ell(1)\oplus\mathcal P_m
\end{equation}
as vector spaces, where $\mathcal P_m\cong U_{m-2}$ and
\begin{equation}\label{eq:dim-pure}
\dim\mathcal P_m=m-1=2\ell-1.
\end{equation}
In particular, $\g_\ell(1)$ admits pure local derivations exactly for
$\ell=\frac32,\frac52,\frac72,\ldots$.
\end{enumerate}
\end{theorem}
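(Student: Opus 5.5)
Part (i) is exactly Theorem~\ref{thm:local-noncentral-dge2}, so I will simply cite it. For $d=1$ the plan is to pass through Proposition~\ref{prop:pure-local-part}. It reduces every local derivation, modulo $\Der\g_\ell(1)$, to a map $\Delta_F$ with $\Delta_F(V_{\ell,1})=0$ and $\Delta_F|_{\mathfrak{sl}_2}=F$, where $F\in\mathcal R_m$. Conversely, every $F\in\mathcal R_m$ produces such a local derivation.

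The key step is to decide which $\Delta_F$ are derivations. Suppose $\Delta_F$ is a derivation. It vanishes on $V$ and its range lies in $V$. Applying the derivation identity to a pair $x\in\mathfrak{sl}_2$, $v\in V$ gives $0=[F(x),v]$, which is automatic. The pair $x,y\in\mathfrak{sl}_2$ gives
\[
F([x,y])=\rho(x)F(y)-\rho(y)F(x).
\]
So $F$ is a $1$-cocycle of $\mathfrak{sl}_2$ with values in $U_m$. By Whitehead's lemma it is a coboundary $x\mapsto\rho_m(x)u$, hence of the form $\ad(-u)$ restricted to $S$, which is inner. Conversely, coboundaries give inner derivations.

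Consequently the linear map $F\mapsto\Delta_F$ induces an isomorphism
\[
\LocDer\g_\ell(1)/\Der\g_\ell(1)\cong\mathcal R_m/\{\text{coboundaries}\}.
\]
One also has to check that $\Delta_F\in\Der\g_\ell(1)$ forces $F$ to be a coboundary, even when $\Delta_F$ differs from a derivation only after the normalization. This holds because the normalization in Proposition~\ref{prop:pure-local-part} subtracts only derivations. Proposition~\ref{prop:sl2-local-coboundaries} then identifies the quotient: it is $0$ when $m$ is even or $m=1$, which gives (ii), and it is isomorphic to $U_{m-2}$ when $m\ge3$ is odd.

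For (iii) I define $\mathcal P_m=\{\Delta_F: F$ in the $U_{m-2}$-isotypic summand of $\operatorname{Hom}(\mathfrak{sl}_2,U_m)\}$. This summand is contained in $\mathcal R_m$ by Proposition~\ref{prop:sl2-local-coboundaries}. It meets $\Der\g_\ell(1)$ trivially by the cocycle argument above, since it intersects the coboundary summand $U_m$ only in $0$. It complements $\Der$ inside $\LocDer$ by the isomorphism above. The dimension count $\dim U_{m-2}=m-1=2\ell-1$ then gives \eqref{eq:dim-pure}. The final sentence of (iii) follows, since odd $m\ge3$ means $\ell\in\{\tfrac32,\tfrac52,\dots\}$.

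The main obstacle I expect is bookkeeping rather than substance. I must make sure that the reduction in Proposition~\ref{prop:pure-local-part} is genuinely a subtraction of derivations, so that the correspondence is an honest linear isomorphism onto the quotient. I also need the $\mathfrak{sl}_2$-equivariance of $F\mapsto\Delta_F$ (the action of $\ad$ of $\mathfrak{sl}_2$ by commutator on $\operatorname{End}\g_\ell(1)$) to see that $\mathcal P_m$ is an $\mathfrak{sl}_2$-module isomorphic to $U_{m-2}$, as recorded in the statement.
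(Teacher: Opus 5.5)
Your proposal is correct and follows the paper's own proof. You cite Theorem~\ref{thm:local-noncentral-dge2} for (i), use Proposition~\ref{prop:pure-local-part} to identify $\LocDer\g_\ell(1)/\Der\g_\ell(1)$ with $\mathcal R_m$ modulo ordinary coboundaries, and read off (ii) and (iii) from Proposition~\ref{prop:sl2-local-coboundaries}, which is exactly what the paper does. Your extra check that $\Delta_F$ is a derivation exactly when $F$ is a $1$-cocycle (hence a coboundary by Whitehead's lemma) spells out why the kernel of $F\mapsto\Delta_F$ modulo $\Der$ is exactly the coboundaries, a point the paper leaves implicit.
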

\begin{proof}
Part (i) is Theorem~\ref{thm:local-noncentral-dge2}. For $d=1$, Proposition~\ref{prop:pure-local-part} identifies the quotient of local derivations by ordinary derivations with
$\mathcal R_m/U_m$. Proposition~\ref{prop:sl2-local-coboundaries} gives parts (ii) and (iii), including the dimension formula.
\end{proof}

\begin{corollary}\label{cor:explicit-pure}
Assume $d=1$ and $m=2\ell\ge3$ is odd. With
$u_k=P_1^{(k)}$, the linear map
\[
\Psi_0(e)=u_0,
\qquad
\Psi_0(h)=-2u_1,
\qquad
\Psi_0(f)=-u_2,
\qquad
\Psi_0(u_k)=0
\]
is a local derivation which is not a derivation. Its $\mathfrak{sl}_2$-span is the full pure-local space $\mathcal P_m$ of dimension $m-1$.
\end{corollary}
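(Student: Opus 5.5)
The plan is to check the three assertions of the corollary in turn: $\Psi_0$ is local, $\Psi_0$ is not a derivation, and the $\mathfrak{sl}_2$-span of $\Psi_0$ is all of $\mathcal P_m$. Each rests on Proposition~\ref{prop:pure-local-part} together with Proposition~\ref{prop:sl2-local-coboundaries}.

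\emph{Locality.} By the converse part of Proposition~\ref{prop:pure-local-part}, it suffices to show that $\Psi_0(x)\in\rho_m(x)U_m$ for every $x\in\mathfrak{sl}_2$. This is the computation in the odd-$m$ part of the proof of Proposition~\ref{prop:sl2-local-coboundaries}, which I will simply recall.
\begin{itemize}
\item A nonzero semisimple $x$ acts invertibly on $U_m$, because $m$ is odd and so $0$ is not a weight.
\item For the nilpotent family $x(t)=e-th-t^2f$, the image $\rho_m(x(t))U_m$ is the kernel of $\varphi_t$, and $\varphi_t(\Psi_0(x(t)))=0$.
\item The remaining nilpotent direction $f$ is checked directly: $\Psi_0(f)=-u_2=-\tfrac1{m-1}\rho_m(f)u_1$.
\end{itemize}
Every element of $\mathfrak{sl}_2$ is a scalar multiple of one of these, so locality follows.

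\emph{Not a derivation.} Suppose $\Psi_0$ were a derivation. By Theorem~\ref{thm:der0} we could write $\Psi_0=\ad(a+v)+\lambda\delta_0$ with $a\in\mathfrak{sl}_2$ and $v\in V$.
\begin{itemize}
\item Restricting to $V$, where $\Psi_0$ vanishes, gives $\rho_m(a)+\lambda I=0$. Since $\rho_m$ is faithful and $\rho_m(\mathfrak{sl}_2)$ meets the scalars only in $0$ (its elements are traceless), this forces $a=0$ and $\lambda=0$.
\item Hence $\Psi_0(x)=[v,x]=-\rho_m(x)v$ for $x\in\mathfrak{sl}_2$.
\item From the value at $h$, namely $\rho_m(h)v=2u_1$, we get $v=cu_1$ with $c=2/(m-2)$.
\item From the value at $e$, namely $-\rho_m(e)v=-cu_0=u_0$, we get $c=-1$.
\end{itemize}
These two values of $c$ contradict each other for $m\ge3$. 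Equivalently, $\Psi_0$ is a nonzero element of the $U_{m-2}$ summand in \eqref{eq:CG}, and this summand meets the coboundary summand $U_m$ only in $0$.

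\emph{The span.} Let $\mathfrak{sl}_2$ act on $\operatorname{Hom}(\mathfrak{sl}_2,U_m)$ by the natural action. Under the identification of Proposition~\ref{prop:pure-local-part}, this action is the commutator with $\ad y$ for $y\in\mathfrak{sl}_2$ restricted to pure-local maps. It preserves $\mathcal R_m$, as noted in the proof of Proposition~\ref{prop:sl2-local-coboundaries}.
\begin{itemize}
\item $\Psi_0$ is a highest weight vector of weight $m-2$. Indeed, $\Psi_0$ is annihilated by the raising action, and its weight is read off from $h$-weights: $\Psi_0$ maps $e$ (weight $2$) to $u_0$ (weight $m$), $h$ to $u_1$, and $f$ to $u_2$. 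Each of these shifts the weight by $m-2$.
\item Therefore $\Psi_0$ generates an irreducible submodule isomorphic to $U_{m-2}$, of dimension $m-1$.
\item This submodule is the $U_{m-2}$ summand of $\mathcal R_m$, so it is a complement to the coboundaries.
\end{itemize}
Since $\mathcal R_m/U_m\cong\mathcal P_m$ by Theorem~\ref{thm:local-noncentral}, this submodule is the full pure-local space $\mathcal P_m$.

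The only delicate point is the identification of the abstract module action on $\operatorname{Hom}(\mathfrak{sl}_2,U_m)$ with commutators by $\ad y$ in $\operatorname{End}\g_\ell(1)$. This amounts to a one-line computation: for $\Delta$ of the form \eqref{eq:pure-local-form}, $[\ad y,\Delta]$ again vanishes on $V$ and sends $x\mapsto\rho_m(y)F(x)-F([y,x])$.
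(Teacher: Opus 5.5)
Your proof is correct. For locality you argue exactly as the paper does: the odd-$m$ part of Proposition~\ref{prop:sl2-local-coboundaries} plus the converse in Proposition~\ref{prop:pure-local-part}. For the span assertion you do slightly more than the paper. The paper only asserts, inside Proposition~\ref{prop:sl2-local-coboundaries}, that $\Psi_0$ is a highest weight vector of the $U_{m-2}$-summand. You verify $e\cdot\Psi_0=0$ and $h\cdot\Psi_0=(m-2)\Psi_0$, and you identify the module action with $[\ad y,\,\cdot\,]$, which fills that gap.

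The one genuinely different step is the proof that $\Psi_0$ is not a derivation.
\begin{itemize}
\item The paper tests the Leibniz rule on the single relation $[h,e]=2e$. It gets $\Psi_0([h,e])=2u_0$, whereas $[\Psi_0(h),e]+[h,\Psi_0(e)]=[-2u_1,e]+[h,u_0]=(m+2)u_0$.
\item You instead invoke Theorem~\ref{thm:der0}. You remove the $\mathfrak{sl}_2$- and $\delta_0$-components by restricting to $V$. You then find the incompatible values $c=2/(m-2)$ and $c=-1$ for the would-be coboundary vector $v=cu_1$.
\end{itemize}

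The paper's check is a one-line computation. It needs neither the derivation classification nor the invertibility of $\rho_m(h)$, which your step uses and which relies on $m$ being odd. Your route instead makes the conceptual reason explicit: $\Psi_0|_{\mathfrak{sl}_2}$ is not of the form $x\mapsto-\rho_m(x)v$, so it lies outside the coboundary summand $U_m$. That is precisely the statement that the class of $\Psi_0$ in $\LocDer\g_\ell(1)/\Der\g_\ell(1)$ is non-zero.
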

\begin{proof}
Locality follows from the odd-$m$ part of Proposition~\ref{prop:sl2-local-coboundaries} and the converse statement in Proposition~\ref{prop:pure-local-part}.  It is not an ordinary derivation. Indeed, using $[h,e]=2e$ and \eqref{eq:Psi0},
\[
\Psi_0([h,e])=2u_0,
\]
whereas
\[
[\Psi_0(h),e]+[h,\Psi_0(e)]
=[-2u_1,e]+[h,u_0]
=(m+2)u_0.
\]
Since $m\ge3$, the derivation identity fails.
\end{proof}

\begin{proposition}\label{prop:first-exception}
Let $d=1$ and $\ell=\frac32$, so that $m=3$ and
\[
\g_{3/2}(1)=\mathfrak{sl}_2\ltimes U_3,
\qquad
U_3=\langle u_0,u_1,u_2,u_3\rangle,
\]
with
\[
[e,u_k]=k u_{k-1},\qquad
[f,u_k]=(3-k)u_{k+1},\qquad
[h,u_k]=(3-2k)u_k.
\]
Define a linear operator $\Psi_0$ by
\begin{equation}\label{eq:Psi0-m3}
\Psi_0(e)=u_0,\qquad
\Psi_0(h)=-2u_1,\qquad
\Psi_0(f)=-u_2,\qquad
\Psi_0(U_3)=0.
\end{equation}
Then $\Psi_0$ is a local derivation but not a derivation. Consequently,
\[
\LocDer\g_{3/2}(1)\ne\Der\g_{3/2}(1),
\qquad
\dim\bigl(\LocDer\g_{3/2}(1)/\Der\g_{3/2}(1)\bigr)=2.
\]
A second generator of the quotient may be chosen as
\begin{equation}\label{eq:Psi1-m3}
\Psi_1(e)=u_1,\qquad
\Psi_1(h)=-2u_2,\qquad
\Psi_1(f)=-u_3,\qquad
\Psi_1(U_3)=0.
\end{equation}
The classes of $\Psi_0$ and $\Psi_1$ span the irreducible $U_1$ pure-local component.
\end{proposition}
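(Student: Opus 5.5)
This is the case $m=3$ of Theorem~\ref{thm:local-noncentral}(iii) and Corollary~\ref{cor:explicit-pure}. I would still verify it by hand in this smallest case, so that it does not rest on the general invariance argument.

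\textbf{Locality of $\Psi_0$.} By the converse part of Proposition~\ref{prop:pure-local-part}, it suffices to show that $\Psi_0(x)\in\rho_3(x)U_3$ for every $x=\alpha e+\beta h+\gamma f$. The determinant of $\rho_3(x)$ is a power of $\beta^2+\alpha\gamma$ up to a nonzero constant, because the eigenvalues are $\pm\sqrt{\beta^2+\alpha\gamma}$ times $\pm1,\pm3$. So only nilpotent $x$ need checking, and for those the cokernel is one-dimensional.

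Up to scaling, the nilpotent elements are $x(t)=e-th-t^2f$ together with $x=f$. For $x(t)$ we have $\Psi_0(x(t))=u_0+2tu_1+t^2u_2$. Pairing with $\varphi_t(u_j)=(-t)^{3-j}$ gives $-t^3+2t^3-t^3=0$. For $x=f$ the image is $\operatorname{span}\{u_1,u_2,u_3\}$, which contains $\Psi_0(f)=-u_2$.

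\textbf{$\Psi_0$ is not a derivation.} Compare $\Psi_0([h,e])=2u_0$ with $[\Psi_0(h),e]+[h,\Psi_0(e)]=(m+2)u_0=5u_0$, as in Corollary~\ref{cor:explicit-pure}.

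\textbf{The map $\Psi_1$.} The same computation works for $\Psi_1$. Now $\Psi_1(x(t))=u_1+2tu_2+t^2u_3$, and $\varphi_t$ gives $t^2-2t^2+t^2=0$. For $x=f$, the value $-u_3$ again lies in the image of $\rho_3(f)$.

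\textbf{Dimension of the quotient.} Proposition~\ref{prop:pure-local-part} identifies $\LocDer/\Der$ with $\mathcal R_3/U_3$. Proposition~\ref{prop:sl2-local-coboundaries} gives $\mathcal R_3\cong U_3\oplus U_1$, so the quotient has dimension $2$.

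\textbf{$\Psi_0,\Psi_1$ span the $U_1$ component.} In $\operatorname{Hom}(\mathfrak{sl}_2,U_3)$ with the action $(y\cdot F)(x)=\rho(y)F(x)-F([y,x])$, one computes
\[
f\cdot\Psi_0=c\,\Psi_1
\]
for a nonzero constant $c$. The check runs through $(f\cdot\Psi_0)(e)$, $(f\cdot\Psi_0)(h)$ and $(f\cdot\Psi_0)(f)$ using the bracket table. Since $\Psi_0$ is a highest weight vector of weight $1$, $\Psi_1$ is, up to scalar, the weight $-1$ vector of the same $U_1$.

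Both classes are nonzero modulo coboundaries. The reason is that $U_1\cap U_3=0$ inside the Clebsch--Gordan decomposition, so they are linearly independent there and span the quotient.

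\textbf{Main obstacle.} The only delicate point is the bookkeeping of signs in $f\cdot\Psi_0$. Any nonzero proportionality constant suffices, so if the signs do not match exactly I would instead show directly that $\Psi_1$ is not a coboundary plus a multiple of $\Psi_0$. This follows by a weight comparison: $\Psi_0$ and $\Psi_1$ have distinct weights $1$ and $-1$, and neither is a coboundary.
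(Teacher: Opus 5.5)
Your proposal is correct and has the same structure as the paper's proof. The shared steps are: range-locality via the converse of Proposition~\ref{prop:pure-local-part}, the same failure of the derivation identity at $[h,e]$, the quotient dimension from Propositions~\ref{prop:pure-local-part} and~\ref{prop:sl2-local-coboundaries}, and $\Psi_1$ obtained by lowering $\Psi_0$. In fact $f\cdot\Psi_0=\Psi_1$ exactly, so your constant is $c=1$ and the fallback weight argument is unnecessary.

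The only differences are technical. On nilpotent lines you test membership with the annihilating functional $\varphi_t$ and check $\Psi_1$ directly. The paper instead exhibits explicit preimages such as $w_t=\frac{1}{2t}u_0+\frac12 u_1$, and gets locality of $\Psi_1$ from the $\mathfrak{sl}_2$-invariance of the range-local space.
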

\begin{proof}
We first verify locality of $\Psi_0$ directly.  Let
\[
x=s+v,\qquad s\in\mathfrak{sl}_2,\quad v\in U_3.
\]
Since $\Psi_0(v)=0$ and $U_3$ is abelian, it is enough to prove that for every
$s\in\mathfrak{sl}_2$ there exists $w_s\in U_3$ such that
\begin{equation}\label{eq:local-ws}
\Psi_0(s)=[w_s,s]=-\rho_3(s)w_s.
\end{equation}
Indeed, then the inner derivation $D_x=\ad(w_s)$ satisfies
\[
D_x(x)=[w_s,s+v]=[w_s,s]=\Psi_0(s)=\Psi_0(x).
\]
If $s=0$, take $D_x=0$.

Suppose first that $s\ne0$ is semisimple.  On the irreducible module $U_3$ the
weights of $s$ are proportional to $3,1,-1,-3$, hence $\rho_3(s)$ is invertible.
Therefore \eqref{eq:local-ws} has a solution $w_s$.

It remains to consider nilpotent elements.  Apart from the limiting direction $f$,
every nilpotent line can be represented by
\[
s_t=e-th-t^2f\qquad(t\in\C).
\]
For $t\ne0$, put
\[
w_t=\frac{1}{2t}u_0+\frac12u_1.
\]
Using the three module relations above, one obtains
\[
-\rho_3(s_t)w_t
 =u_0+2tu_1+t^2u_2
 =\Psi_0(s_t).
\]
For $t=0$ we take $w_0=-u_1$, since
\[
-[e,-u_1]=u_0=\Psi_0(e).
\]
For the remaining nilpotent direction $s=f$, take $w_f=\frac12u_1$; then
\[
-[f,\tfrac12u_1]=-u_2=\Psi_0(f).
\]
Thus $\Psi_0$ is local.

The map $\Psi_0$ is not a derivation.  Indeed, from $[h,e]=2e$ we have
\[
\Psi_0([h,e])=2u_0,
\]
whereas
\[
[\Psi_0(h),e]+[h,\Psi_0(e)]
=[-2u_1,e]+[h,u_0]
=2u_0+3u_0=5u_0.
\]
Hence the derivation identity fails.

Finally, Proposition~\ref{prop:sl2-local-coboundaries} identifies the pure-local quotient
with the irreducible module $U_{m-2}=U_1$, which has dimension two.  The second map
$\Psi_1$ in \eqref{eq:Psi1-m3} is obtained from $\Psi_0$ by the natural lowering action of
$f$ on $\operatorname{Hom}(\mathfrak{sl}_2,U_3)$:
\[
(f\cdot\Psi_0)(x)=\rho_3(f)\Psi_0(x)-\Psi_0([f,x]).
\]
A direct calculation gives
\[
(f\cdot\Psi_0)(e)=u_1,\qquad
(f\cdot\Psi_0)(h)=-2u_2,\qquad
(f\cdot\Psi_0)(f)=-u_3.
\]
Thus $\Psi_1=f\cdot\Psi_0$.  Since the range-local space is an $\mathfrak{sl}_2$-submodule, $\Psi_1$ is local, and the two non-zero weight vectors $\Psi_0,\Psi_1$ span the irreducible $U_1$ quotient.
\end{proof}

\begin{remark}\label{rem:second-exception}
For $d=1$ and $\ell=\frac52$, one has $m=5$ and the pure-local component is
$U_{m-2}=U_3$.  Hence
\[
\dim\bigl(\LocDer\g_{5/2}(1)/\Der\g_{5/2}(1)\bigr)=4.
\]
\end{remark}

\begin{remark}[Low-weight consistency check]\label{rem:low-weight-check}
Proposition~\ref{prop:sl2-local-coboundaries} gives, for the first five strings,
\[
\dim\mathcal R_1=2,\qquad
\dim\mathcal R_2=3,\qquad
\dim\mathcal R_3=6,\qquad
\dim\mathcal R_4=5,\qquad
\dim\mathcal R_5=10.
\]
After quotienting by the $(m+1)$-dimensional coboundary space $U_m$, the corresponding pure-local dimensions are
\[
0,\ 0,\ 2,\ 0,\ 4.
\]
Thus the explicit $m=3$ calculation above is the first member of the general odd-string family, while the neighboring even strings have no pure local component.
\end{remark}

\begin{remark}[Effect of the central cocycle]\label{rem:central-rigidification}
The exceptional family in Theorem~\ref{thm:local-noncentral} occurs precisely because the radical of
$\g_\ell(1)$ is abelian.  In the mass central extension, the same odd values of $m$ carry the non-degenerate complementary pairing
$[p_r,p_{m-r}]\ne0$.  The central coefficient comparisons used below eliminate the entire
$U_{m-2}$ pure-local summand.  Thus the mass cocycle rigidifies the local-derivation problem.
\end{remark}

\subsection{Local derivations on the mass central extension}\label{subsec:local-mass}
We next study the mass extension. Here the Heisenberg pairing between complementary levels introduces additional coefficient constraints, and the main task is to show that these constraints eliminate every pure local degree of freedom left by the unextended algebra.

Put $m=2\ell$; in the mass case $m$ is odd. It is convenient to use
\[
h=D,\qquad e=-H,\qquad f=C,\qquad z=M,
\qquad p_{i,k}=P_i^{(k)}\quad(1\le i\le d,\ 0\le k\le m).
\]
Then
\begin{align*}
[h,e]&=2e,&[h,f]&=-2f,&[e,f]&=h,\\
[h,p_{i,k}]&=(m-2k)p_{i,k},&
[e,p_{i,k}]&=k p_{i,k-1},&
[f,p_{i,k}]&=(m-k)p_{i,k+1},
\end{align*}
and
\begin{equation}\label{eq:mass-pairing-local}
[p_{i,r},p_{j,s}]=\delta_{ij}\delta_{r+s,m}I_r z,
\qquad I_{m-r}=-I_r .
\end{equation}
The rotation action is the one in \eqref{eq:noncentral-table}.

For $d\neq2$, every pointwise derivation has the form
\begin{equation}\label{eq:localM1}
D_x=\ad a_x+\lambda_x\delta_M,
\end{equation}
whereas for $d=2$ one has
\begin{equation}\label{eq:localM2}
D_x=\ad a_x+\lambda_x\delta_M+\mu_x\eta_M.
\end{equation}

\begin{lemma}\label{lem:localM-support}
Let $\Delta$ be a local derivation on $\widehat{\g}^{\,M}_\ell(d)$. Then
\[
\Delta(z)\in\C z,
\]
and for every $i$ and $0\le k\le m$,
\begin{equation}\label{eq:localM-Psupport}
\Delta(p_{i,k})\in
\operatorname{span}\Bigl(
\{p_{i,k-1},p_{i,k+1},z\}\cup\{p_{j,k}:1\le j\le d\}
\Bigr),
\end{equation}
where the terms with indices $-1$ and $m+1$ are omitted. The restrictions on
$\Delta(h),\Delta(e),\Delta(f)$ are the corresponding $h,e,f$ versions of
Lemma~\ref{lem:local0-support}.
\end{lemma}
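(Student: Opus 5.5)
The plan is to argue pointwise, exactly as in Lemma~\ref{lem:local0-support}. By Theorem~\ref{thm:massder}, for each $x$ the value $\Delta(x)$ equals $D_x(x)$ with $D_x$ of the form \eqref{eq:localM1} or \eqref{eq:localM2}. Write $a_x=\alpha h+\beta e+\gamma f+\sum_{r<t}c_{rt}M_{rt}+\sum_{j,n}b_{j,n}p_{j,n}+\kappa z$. Then read off $D_x(x)$ from the multiplication table \eqref{eq:noncentral-table} together with \eqref{eq:mass-pairing-local}. All the work is collecting which basis vectors can appear.

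\emph{The element $z$.} We have $[a_z,z]=0$ because $z$ is central. Also $\delta_M(z)=2z$, and $\eta_M(z)=0$ since $\eta_M$ is non-zero only on $s$. Hence $\Delta(z)=2\lambda_z z\in\C z$.

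\emph{The elements $p_{i,k}$.} Compute $[a_x,p_{i,k}]$ term by term:
\begin{itemize}[label=--]
\item the $h$-component contributes a multiple of $p_{i,k}$;
\item the $e$-component contributes a multiple of $p_{i,k-1}$, and the $f$-component a multiple of $p_{i,k+1}$ (omitted at the ends);
\item each $M_{rt}$ contributes, by the rotation action, only vectors $p_{j,k}$ of the same level;
\item the radical part $\sum b_{j,n}p_{j,n}$ contributes, by \eqref{eq:mass-pairing-local}, only multiples of $z$;
\item $\kappa z$ contributes nothing.
\end{itemize}
Moreover $\delta_M(p_{i,k})=p_{i,k}$, and for $d=2$ we have $\eta_M(p_{i,k})=0$. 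Together these give \eqref{eq:localM-Psupport}.

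\emph{The elements $h,e,f$.} Repeat the bookkeeping of Lemma~\ref{lem:local0-support}. Both $\delta_M$ and $\eta_M$ vanish on $\mathfrak{sl}_2$, and the rotation part commutes with $\mathfrak{sl}_2$. So only $[\mathfrak{sl}_2+V,x]$ contributes. Brackets with $V$ land in $V$ with the same level restrictions as before: $[v,e]$ avoids level $m$, $[v,f]$ avoids level $0$, and $[v,h]$ avoids the zero weight, which is automatic for odd $m$. No $z$-term arises, since $z$ appears only in brackets of two radical vectors. This yields the stated $h,e,f$ versions.

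There is no genuine obstacle. The one point needing care is to check that $\eta_M$ contributes nothing for $d=2$, which holds because it is supported on $s$. A second check is that the central component can appear only in $\Delta(p_{i,k})$ and in $\Delta(z)$.
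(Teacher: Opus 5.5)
Your proposal is correct and is essentially the paper's own argument: you insert the pointwise form $D_x=\ad a_x+\lambda_x\delta_M$ (plus $\mu_x\eta_M$ when $d=2$) from Theorem~\ref{thm:massder} into the multiplication table, and the only new contribution relative to Lemma~\ref{lem:local0-support} is the central multiple of $z$ that the complementary component $p_{i,m-k}$ of $a_x$ produces in $\Delta(p_{i,k})$. Your closing remark that a central component occurs only in $\Delta(p_{i,k})$ and $\Delta(z)$ holds only for the elements covered by the lemma, since for $d=2$ one has $\eta_M(s)=z$ and so $\Delta(s)$ may carry a $z$-term.
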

\begin{proof}
The center is characteristic and the inner part of a pointwise derivation vanishes on $z$, while
$\delta_M(z)=2z$ and $\eta_M(z)=0$. Hence $\Delta(z)\in\C z$.
For $p_{i,k}$, the $e$- and $f$-parts of $a_x$ change the level by one, the $h$-part and
$\delta_M$ preserve the level, and the rotation part changes only the spatial index. The only extra
term comes from a complementary $p_{i,m-k}$-component of $a_x$, which contributes a central
multiple of $z$ through \eqref{eq:mass-pairing-local}.
\end{proof}

\begin{lemma}\label{lem:mass-normalization}
Let $\Delta\in\LocDer\widehat{\g}^{\,M}_\ell(d)$. There exists
$D_0\in\Der\widehat{\g}^{\,M}_\ell(d)$ such that, for
$\Delta_0=\Delta-D_0$,
\begin{equation}\label{eq:mass-normalized}
\pi\Delta_0(x)=0\quad(x\in\slc\oplus\sod),
\qquad
\Delta_0(h)=\Delta_0(z)=0,
\end{equation}
where $\pi$ is the quotient map modulo the nilradical
$V_{\ell,d}\oplus\C z$. For $d=2$ the same statement holds with
$\sod=\C s$, $s=-\mathrm iM_{12}$.
\end{lemma}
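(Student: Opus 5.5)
Let $L=\widehat{\g}^{\,M}_\ell(d)$ and $R=V_{\ell,d}\oplus\C z$. The plan runs parallel to the first half of the proof of Proposition~\ref{prop:pure-local-part}, with two extra steps that remove the $h$-component and the central value. First, $R$ is the nilradical and hence characteristic, so every pointwise derivation $D_x$ preserves $R$. Therefore $\Delta(R)\subseteq R$, and $\Delta$ induces a linear map $\bar\Delta$ on $L/R\cong S=\slc\oplus\sod$. For $x\in S$, $\pi\Delta(x)=\pi D_x(x)$ is the value at $x$ of the derivation of $S$ induced by $D_x$, so $\bar\Delta$ is a local derivation of $S$. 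By Lemma~\ref{lem:local-on-S}, $\bar\Delta$ is a derivation of $S$.

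For $d\neq2$, $S$ is semisimple, so $\bar\Delta=\ad\bar a$ with $\bar a\in S$. This lifts to $\ad a$ on $L$ with $a\in S$. For $d=2$ we get $\bar\Delta=\ad a+\mu\vartheta$ as in Lemma~\ref{lem:local-on-S}. To see that $\mu=0$, use locality at $s$. By Theorem~\ref{thm:massder}, every derivation of $L$ has the form $\ad u+\lambda\delta_M+b\eta_M$. Each term sends $s$ into $R$, since $\ad u(s)\in[L,s]\subseteq R$, $\delta_M(s)=0$ and $\eta_M(s)=M$. Hence $\pi\Delta(s)=0$ and $\mu=0$. Subtracting $\ad a$ gives $\pi\Delta_1(x)=0$ for all $x\in S$.

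Next we normalize $h$ and $z$. By Lemma~\ref{lem:localM-support}, $\Delta_1(z)=\rho z$ for some $\rho\in\C$; subtracting $\frac{\rho}{2}\delta_M$ gives $\Delta_2(z)=0$ without changing the $S$-quotient. Now $\Delta_2(h)\in R$ and, by locality, $\Delta_2(h)=D_h(h)=[a_h,h]+\mu_h\eta_M(h)=[a_h,h]$, because $\delta_M$ and $\eta_M$ vanish on $h$. Write the $R$-component of $a_h$ as $v+cz$. Since $\pi\Delta_2(h)=0$, the $S$-part contributes nothing modulo $R$, and
\[
\Delta_2(h)=-[h,v]\in [h,V_{\ell,d}]=\operatorname{span}\{p_{i,k}:m-2k\ne0\}.
\]
Because $m$ is odd, $[h,\cdot\,]$ is invertible on $V_{\ell,d}$. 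This is the main point to check: the nonzero $S$-component of $a_h$ might create extra $R$-terms. It does not, because for $y\in S$ we have $[y,h]\in S$, which is killed by $\pi$ and so is in fact zero, as $\pi\Delta_2(h)=0$ already holds. There is also no $z$-term, since $[v,h]\in V$. So we can choose $w\in V_{\ell,d}$ with $[h,w]=\Delta_2(h)$ and subtract $\ad(-w)$, i.e. add $\ad w$ appropriately so that $h\mapsto 0$. This inner derivation takes values in $[L,V]\subseteq R$, so it preserves $\pi\Delta_0|_S=0$, and it kills $z$ because $z$ is central. Thus $\Delta_0(z)=0$ is kept.

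Finally, $D_0$ is the sum of the derivations subtracted along the way: an inner derivation from $S$, the multiple $\frac{\rho}{2}\delta_M$, and $\ad$ of an element of $V_{\ell,d}$. So $D_0\in\Der L$ and \eqref{eq:mass-normalized} holds for $\Delta_0=\Delta-D_0$. The $d=2$ case differs only in the argument with $\eta_M$ used above to force $\mu=0$.
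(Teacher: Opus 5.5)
Your proof is correct and follows essentially the paper's route. You pass to the reductive quotient and apply Lemma~\ref{lem:local-on-S}, using locality at $s$ to exclude the $\vartheta$-direction when $d=2$. You then lift and subtract an inner derivation, and subtract further derivations that kill $h$ and $z$ without disturbing the quotient normalization.

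The only difference is in the last step. The paper subtracts the single pointwise derivation at $h+z$, with its reductive inner part removed, and then separates $\Delta(h)\in V_{\ell,d}$ from $\Delta(z)\in\C z$. You instead subtract $\frac{\rho}{2}\delta_M$ and an inner derivation by a vector of $V_{\ell,d}$ separately. Invertibility of $\ad h$ on $V_{\ell,d}$ is not actually needed there, since the $V$-component $v$ of $a_h$ already satisfies $\ad v(h)=\Delta_2(h)$.
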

\begin{proof}
The nilradical $V_{\ell,d}\oplus\C z$ is characteristic. Therefore every pointwise derivation preserves it, so $\Delta$ induces a well-defined linear local derivation on the reductive quotient. On the semisimple part of that quotient, the induced map is a local derivation and therefore an inner derivation. In the case $d=2$, every derivation of the full algebra induces zero on
the one-dimensional factor $\C s$, so the induced local map is zero there as well. Subtracting the
corresponding inner derivation gives
\[
\pi\Delta(x)=0\qquad(x\in\slc\oplus\sod).
\]

Choose a pointwise derivation $D_{h+z}$ such that
$\Delta(h+z)=D_{h+z}(h+z)$. Write the reductive component of the inner element of
$D_{h+z}$ as $b$. Since the left-hand side has no reductive component,
$[b,h]=0$. Hence $D_{h+z}-\ad b$ has the same value at $h+z$ and induces zero on the
reductive quotient. Subtracting this derivation preserves the first normalization and gives
$\Delta(h+z)=0$. Lemma~\ref{lem:localM-support} gives
$\Delta(h)\in V_{\ell,d}$ and $\Delta(z)\in\C z$, so linear independence yields
$\Delta(h)=\Delta(z)=0$.
\end{proof}

For each spatial index $i$, let
\[
L_i=\operatorname{span}\{e,f,h,z,p_{i,0},\ldots,p_{i,m}\}.
\]
This is the one-string mass conformal Galilei algebra.

\begin{lemma}\label{lem:mass-low-parameters}
For the one-string mass algebra one has
\[
\LocDer\widehat{\g}^{\,M}_{1/2}(1)=\Der\widehat{\g}^{\,M}_{1/2}(1),
\qquad
\LocDer\widehat{\g}^{\,M}_{3/2}(1)=\Der\widehat{\g}^{\,M}_{3/2}(1).
\]
\end{lemma}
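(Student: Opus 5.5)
We argue by direct computation in the one-string algebras $L=\operatorname{span}\{e,f,h,z,p_0,\ldots,p_m\}$ with $m=1$ and $m=3$, using the normal form supplied by Theorem~\ref{thm:massder}. For $d=1$ every pointwise derivation is $\ad a_x+\lambda_x\delta_M$. By Lemma~\ref{lem:mass-normalization} we may subtract a derivation and assume
\[
\pi\Delta(e)=\pi\Delta(f)=\pi\Delta(h)=0,\qquad \Delta(h)=\Delta(z)=0.
\]
Lemma~\ref{lem:localM-support} then restricts $\Delta(p_k)$ to $\operatorname{span}\{p_{k-1},p_k,p_{k+1},z\}$. In the same way, the $h,e,f$ versions of Lemma~\ref{lem:local0-support} restrict $\Delta(e)$ and $\Delta(f)$ to the radical. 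Concretely, $\Delta(e)\in\operatorname{span}\{p_0,\ldots,p_{m-1},z\}$, and similarly for $f$. The goal is to prove $\Delta=c\,\delta_M$ plus an inner derivation $\ad w$ with $w\in V$. This is done in three steps: first the module part on $V$, then the values on $e,f$, and finally the elimination of the leftover pure-local candidate using the central pairing.

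\textbf{Step 1: the module part.} Work modulo $z$, so the induced map on $V=U_m$ is range-local for $\mathcal A_m$. Evaluating locality at $h+v$, in the style of Lemma~\ref{lem:noncentral-compatibility}, and invoking Lemma~\ref{lem:sl2-reflexive}, gives $\Delta|_V\equiv \rho(b)+\lambda I \pmod{z}$. The compatibility argument of Lemma~\ref{lem:noncentral-compatibility}, now tested at $e+tv$ and $f+tv$, forces $b=0$. Subtracting $\lambda\delta_M$ leaves $\Delta(p_k)=c_kz$ and $\Delta(z)=0$. Next apply locality to $v=p_k$ and to sums $p_r+p_{m-r}$. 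A pointwise derivation $\ad a+\mu\delta_M$ with $\ad a+\mu I$ vanishing on the relevant vector modulo $z$ contributes a central term $I_r\alpha_{m-r}$ coming from the $p$-components of $a$. These central terms can be adjusted freely. Hence, rather than testing $p_k$ alone, test vectors such as $h+p_k$ or $e+p_k$, where $a$ is constrained by the reductive component. This forces the combination of $c_k$ with $\Delta(e)$, $\Delta(f)$ that we need.

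\textbf{Step 2: the values on $e$ and $f$.} Modulo $z$, the map $x\mapsto \pi_V\Delta(x)$ on $\mathfrak{sl}_2$ is range-local. Proposition~\ref{prop:sl2-local-coboundaries} then gives
\[
\pi_V\Delta|_{\mathfrak{sl}_2}=\rho(\cdot)w+\text{a }U_{m-2}\text{-component}.
\]
For $m=1$ the $U_{m-2}$-component is absent, so after subtracting $\ad w$ the radical part on $\mathfrak{sl}_2$ vanishes. The remaining central values $\Delta(e)=\epsilon z$ and $\Delta(f)=\varphi z$ are killed by locality at $e$ and at $f$. The reason is that $[a,e]$ has $z$-component $0$, since $z$ never appears in brackets with $e$, and $\delta_M(e)=0$. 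For $m=3$, write the pure-local part as $\alpha\Psi_0+\beta\Psi_1$, with $\Psi_0,\Psi_1$ as in Proposition~\ref{prop:first-exception}, together with possible central parts of $\Delta(e)$ and $\Delta(f)$ and the coefficients $c_k$.

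\textbf{Step 3: elimination via the central pairing (main obstacle).} The key test elements are $x=e+p_k$ and $x=f+p_k$, together with combinations such as $e+tp_3$. Locality requires $a_x=w_x+\text{(reductive part centralizing }e)$ with $[w_x,e]$ matching $\alpha\Psi_0(e)$. It also requires the $z$-coefficient of $\Delta(x)$, namely $\epsilon+c_k$, to equal $[w_x,p_k]$ computed through $I_r$. Because $w_x$ is pinned down, up to $\ker\rho(e)=\C u_0$, by the radical component, the central coefficient becomes a determined linear function of $\alpha,\beta$. In particular, for $x=e+tp_0$ the solution is $w=-u_1+\gamma u_0$. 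It pairs with $p_0$ only through a $p_3$-component, which is absent, and this should force $\alpha=0$; the element $f+tp_3$ symmetrically forces $\beta=0$, after which the $c_k$ and $\epsilon,\varphi$ vanish. I expect the bookkeeping here, namely ensuring that the freedom $\gamma u_0$ and the scalar $\mu$ cannot absorb the discrepancy, to be the real difficulty. If a single test element does not suffice, I would use a two-parameter family $e+tp_0+sp_3$ and compare coefficients of $ts$.
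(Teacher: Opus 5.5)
\textbf{There is a genuine gap.} Your Steps~1--2 are a reasonable route that differs from the paper's. The reflexivity Lemma~\ref{lem:sl2-reflexive}, the compatibility argument and Proposition~\ref{prop:sl2-local-coboundaries} do carry over to the normalized mass map, and they cut the problem down to a few candidates. But the proposal stops exactly where the content of the lemma lies, and it loses one of those candidates.

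\emph{The lost candidate.} In Step~1 you assert that subtracting $\lambda\delta_M$ leaves $\Delta(z)=0$. The normalization already had $\Delta(z)=0$ and $\delta_M(z)=2z$, so you actually get $\Delta(z)=-2\lambda z$. The surviving candidate $p_k\mapsto p_k$, $z\mapsto 0$, $\mathfrak{sl}_2\mapsto 0$ is a derivation of the non-central algebra but not of the mass algebra, and your plan never rules it out. Doing so needs a test element containing $z$ with a tuned coefficient. Write $\Delta(z)=\sigma z$ after $\Delta(V)=0$. For $m=3$, at $h+p_0+p_3+cz$ the $p_0$-, $p_3$- and $z$-coefficients give $(4+2c)\mu=c\sigma$, which forces $\sigma=0$ only for $c=-2$. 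This is the paper's element $h+p_0+p_3-2z$. For $m=1$ the analogous relation at $h+p_0+p_1+cz$ is $(2c-2)\mu=c\sigma$, so $c=1$ is needed.

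\emph{Step~3 is not carried out, and the mechanism you propose fails.} Take $x=e+tp_0$ with $\Delta(e)=\alpha p_0+\beta p_1$ and $\Delta(p_k)=c_kz$. Write the pointwise inner element as $\kappa e+\sum\gamma_kp_k$ plus $\mu\delta_M$.
\begin{itemize}
\item The $p_2$-coordinate forces $\gamma_3=0$, so the central comparison gives only $c_0=0$.
\item The $p_0$-coordinate $-\gamma_1+\mu t=\alpha$ is solvable for every $\alpha$, so $\alpha$ is unconstrained.
\item Symmetrically, $f+tp_3$ yields only $c_3=0$.
\end{itemize}
What works is to compute the \emph{same} central coefficient from two different anchors:
\begin{itemize}
\item At $h+tp_2$ the $p_1$-coordinate forces $\gamma_1=2\alpha$, hence $c_2=-2\gamma_1=-4\alpha$.
\item At $e+tp_2$ it forces $\gamma_1=-\alpha$, hence $c_2=2\alpha$.
\item Together these give $\alpha=c_2=0$.
\item Likewise $h+tp_1$ and $f+tp_1$ give $c_1=-4\beta$ and $c_1=2\beta$, so $\beta=c_1=0$.
\end{itemize}
This is the counterpart, in your normalization, of the paper's use of $h\pm p_2$ together with $e+p_2$ and $f+p_1$. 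There the central contribution at $h+tp_k$ is quadratic in $t$ while $\Delta$ is linear.

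Without these cross-anchor comparisons and the tuned scalar test, the proposal does not establish the lemma.
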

\begin{proof}
Let $\nabla$ be a local derivation on the one-string algebra.  By the $d=1$ case of
Lemma~\ref{lem:mass-normalization}, after subtracting one global derivation we may assume
\begin{equation}\label{eq:low-mass-normalization}
\pi\nabla(x)=0\quad(x\in\mathfrak{sl}_2),
\qquad
\nabla(h)=\nabla(z)=0.
\end{equation}
It is enough to prove that such a normalized map is zero.  At each test element below we write
its pointwise derivation in the form
\[
D_x=\ad y_x+\lambda_x\delta_M
\]
and compare the coefficients in the fixed basis.  Only the defining relations above are used.

\smallskip
\noindent\emph{Case $\ell=\frac12$ ($m=1$).}
Here $I_0=-1$.  Lemma~\ref{lem:localM-support} and
\eqref{eq:low-mass-normalization} give
\begin{align*}
\nabla(e)&=\alpha p_0, & \nabla(f)&=\beta p_1,\\
\nabla(p_0)&=a p_0+b p_1+r z,
&\nabla(p_1)&=c p_0+d p_1+s z.
\end{align*}
Successive applications of locality give the following relations:
\begin{center}
\begin{tabular}{c|c}
 test element & relation obtained by coefficient comparison \\
\hline
$e-p_0-z$ & $b=0$\\
$f-p_1-z$ & $c=0$\\
$h-p_0$, $h+p_0$ & $-b-r=0$, $-b+r=0$\\
$h-f-p_1$, $h-f+p_1$ & $-c-s=0$, $-c+s=0$\\
$e-h-f-z$, $e+h-f-z$ & $-\alpha-\beta=0$, $\alpha-\beta=0$\\
$h+p_0+p_1+z$ & $-a-b-c-d+r+s=0$\\
$e+f+p_0+p_1$ & $a+\alpha-b-\beta+c-d+r+s=0$.
\end{tabular}
\end{center}
Hence
\[
b=c=r=s=\alpha=\beta=0,
\qquad a+d=0,
\qquad a-d=0,
\]
so $a=d=0$ and therefore $\nabla=0$.

\smallskip
\noindent\emph{Case $\ell=\frac32$ ($m=3$).}
Now
\[
I_0=6,\qquad I_1=-2,\qquad I_2=2,\qquad I_3=-6.
\]
By Lemma~\ref{lem:localM-support}, write
\begin{align*}
\nabla(e)&=E_0p_0+E_1p_1+E_2p_2,\\
\nabla(f)&=F_1p_1+F_2p_2+F_3p_3,\\
\nabla(p_0)&=A_{00}p_0+A_{01}p_1+A_{0z}z,\\
\nabla(p_1)&=A_{10}p_0+A_{11}p_1+A_{12}p_2+A_{1z}z,\\
\nabla(p_2)&=A_{21}p_1+A_{22}p_2+A_{23}p_3+A_{2z}z,\\
\nabla(p_3)&=A_{32}p_2+A_{33}p_3+A_{3z}z.
\end{align*}
We record the coefficient comparisons in a reduction order.  Locality at $e+p_2$ and
$f+p_1$ gives
\[
A_{23}=0,\qquad A_{2z}=2E_0,
\qquad
A_{10}=0,\qquad A_{1z}=-2F_3.
\]
The pairs $e+p_0$, $h+p_0$ and $h\pm p_1$, $h\pm p_2$ yield
\[
A_{0z}=E_2=A_{12}=A_{1z}=F_3=A_{21}=A_{2z}=E_0=0.
\]
Next, locality at $h+p_3$ and $f+p_3$ gives
\[
A_{3z}=F_1=0.
\]
Using $e+h-f$ and $e-h-f$ we then obtain
\[
E_1=F_2=0,
\]
so $\nabla(e)=\nabla(f)=0$.  The test elements
$h+p_0+p_2$ and $h+p_1+p_3$ give
\[
A_{01}=A_{32}=0.
\]
Finally, applying locality to
\[
e+h+p_0+p_3,
\qquad e+h+p_1-p_2,
\qquad e+h+p_1-p_3
\]
gives, after the preceding vanishing relations,
\[
A_{00}=A_{33},
\qquad
A_{11}=A_{22},
\qquad
A_{11}=A_{33}.
\]
Thus there is a scalar $t$ such that
\[
\nabla(p_k)=t p_k\qquad(0\le k\le3),
\qquad
\nabla(e)=\nabla(f)=\nabla(h)=\nabla(z)=0.
\]
It remains to eliminate this last scalar.  Apply locality to
\[
x=h+p_0+p_3-2z.
\]
Writing $D_x=\ad y_x+\lambda_x\delta_M$ and comparing the $p_0,p_3,z$ coefficients gives
\[
2A_{00}+A_{0z}+2A_{33}+A_{3z}=0.
\]
All central coefficients have already vanished, hence $4t=0$ and $t=0$.
Therefore $\nabla=0$ also for $m=3$.  Restoring the derivation subtracted in
\eqref{eq:low-mass-normalization} proves both assertions.
\end{proof}

\begin{lemma}\label{lem:mass-one-string}
Assume that $\Delta$ satisfies \eqref{eq:mass-normalized}. Then
\begin{equation}\label{eq:mass-string-reduced}
\Delta(e)=\Delta(f)=0,
\qquad
\Delta(p_{i,k})=\sum_{\substack{1\le j\le d\\j\ne i}}
a_{ji}^{(k)}p_{j,k}
\quad(1\le i\le d,\ 0\le k\le m)
\end{equation}
for suitable scalars $a_{ji}^{(k)}$.
\end{lemma}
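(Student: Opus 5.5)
The strategy is to reduce to the one-string subalgebras $L_i$ and invoke the one-string rigidity: the earlier treatment \cite{AYCG} for $\ell\ne\frac12,\frac32$, and Lemma~\ref{lem:mass-low-parameters} for the two low parameters. The difficulty is that $\Delta$ restricted to $L_i$ need not land in $L_i$, and a pointwise derivation of the whole algebra need not restrict to a derivation of $L_i$. So I first project. Let $\pi_i$ be the projection of $\widehat{\g}^{\,M}_\ell(d)$ onto $L_i$ along the span of the rotations $M_{ab}$ and the $p_{j,k}$ with $j\ne i$. I define $\nabla_i=\pi_i\circ\Delta|_{L_i}$.

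\emph{Step 1: $\nabla_i$ is a local derivation of $L_i$.} Take $x\in L_i$ and a pointwise derivation $D_x=\ad a_x+\lambda_x\delta_M(+\mu_x\eta_M)$. Write $a_x=a'+a''$ with $a'\in L_i$ and $a''$ in the complement, which is spanned by the rotations and the other strings. Rotations map $L_i$-vectors $p_{i,k}$ into other strings and kill $e,f,h,z$. The $p_{j,\cdot}$ with $j\ne i$ bracket with $e,f,h$ into other strings and with $p_{i,\cdot}$ to zero, since the mass pairing is $\delta_{ij}$. The $\eta_M$-term vanishes on $L_i$. Hence $\pi_iD_x(x)=[a',x]+\lambda_x\delta_M(x)$, and the right-hand side is the value of a derivation of $L_i$. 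Therefore $\nabla_i\in\LocDer L_i=\Der L_i$.

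\emph{Step 2: $\nabla_i$ is zero.} The normalization \eqref{eq:mass-normalized} persists for $\nabla_i$: we have $\nabla_i(h)=\nabla_i(z)=0$ and $\pi\nabla_i(\mathfrak{sl}_2)=0$. By the one-string case of the derivation theorem, $\nabla_i=\ad y+c\,\delta_M$ with $y\in L_i$. The condition $\nabla_i(z)=0$ forces $c=0$. The condition that the $\mathfrak{sl}_2$-quotient part vanishes puts $y$ into the Heisenberg part plus the centralizer of $\mathfrak{sl}_2$, so $y\in\operatorname{span}\{p_{i,k}\}\oplus\C z$. Then $0=\nabla_i(h)=[y,h]$, and since $h$ acts invertibly on $U_m$ for odd $m$, the $p$-part of $y$ vanishes. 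Hence $\nabla_i=0$.

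\emph{Step 3: conclusion from the support lemma.} By Step 2, $\Delta(e)$ and $\Delta(f)$ have no components on string $i$, for every $i$. Lemma~\ref{lem:local0-support} (its $e,f$ version) also shows they have no rotation or central components, so $\Delta(e)=\Delta(f)=0$. For $p_{i,k}$, Lemma~\ref{lem:localM-support} confines $\Delta(p_{i,k})$ to $p_{i,k\pm1}$, $z$ and $p_{j,k}$. The components along $p_{i,k\pm1}$, $p_{i,k}$ and $z$ are exactly $\pi_i\Delta(p_{i,k})=\nabla_i(p_{i,k})=0$. What remains is $\sum_{j\ne i}a^{(k)}_{ji}p_{j,k}$, which is \eqref{eq:mass-string-reduced}.

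The main obstacle is Step 1: one must check carefully that the off-string part of $a_x$ contributes nothing to the $L_i$-components. The one delicate term is the central contribution from $[p_{j,\cdot},p_{i,\cdot}]$ with $j\ne i$, and it is excluded by the factor $\delta_{ij}$ in \eqref{eq:mass-pairing-local}. For $d=2$ I would work in the real basis $p_{1,k},p_{2,k}$ rather than in $p_n,q_n$, so that the same projection argument applies verbatim.
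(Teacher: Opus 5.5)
Your proposal is correct and follows the paper's own argument. You use the same projection $\pi_i$ onto $L_i$ and the same check that rotation and off-string parts of $a_x$ contribute nothing to the $L_i$-components (because of the $\delta_{ij}$ in the mass pairing). You then invoke one-string rigidity via \cite{AYCG} and Lemma~\ref{lem:mass-low-parameters}, and use \eqref{eq:mass-normalized} to force $\ad y+c\,\delta_M$ to vanish, exactly as the paper does; your Step~3 just writes out the final read-off that the paper states in one line.
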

\begin{proof}
Let $\pi_i$ be the projection onto $L_i$ along the other spatial strings and the rotation part.
If $x\in L_i$ and
\[
D_x=\ad a_x+\lambda_x\delta_M
\quad\text{or}\quad
D_x=\ad a_x+\lambda_x\delta_M+\mu_x\eta_M,
\]
then the rotation component of $a_x$ sends $p_{i,k}$ into the other spatial strings, and the
components $p_{j,r}$ with $j\ne i$ have zero bracket with the $i$-th Heisenberg string. Consequently
$\pi_iD_x(x)$ is the value at $x$ of a derivation of $L_i$. Thus
$\pi_i\Delta|_{L_i}$ is a local derivation of the one-string mass algebra.

For $\ell\notin\{\frac12,\frac32\}$, the one-string rigidity theorem is proved in
\cite{AYCG}; the two missing low parameters are established directly in
Lemma~\ref{lem:mass-low-parameters}.  Hence every local derivation on $L_i$ is a derivation for
all admissible half-integers $\ell$.  Under the normalization \eqref{eq:mass-normalized}, this derivation induces zero
on $\mathfrak{sl}_2$ and vanishes at both $h$ and $z$. Write it as
$\ad u+\lambda\delta_M$. The quotient condition removes the $\mathfrak{sl}_2$-part of $u$, while
$0=(\ad u)(h)$ forces the $V$-part of $u$ to vanish because the $h$-weights
$m,m-2,\ldots,-m$ are all non-zero. Finally,
$0=\lambda\delta_M(z)=2\lambda z$ gives $\lambda=0$. Hence
$\pi_i\Delta|_{L_i}=0$.
Since this holds for every $i$, all same-string adjacent, diagonal and central terms disappear from
\eqref{eq:localM-Psupport}, and the same projections give $\Delta(e)=\Delta(f)=0$.
This proves \eqref{eq:mass-string-reduced}.
\end{proof}

\begin{lemma}\label{lem:mass-spatial}
Under the assumptions of Lemma~\ref{lem:mass-one-string}, there exists a single
skew-symmetric matrix $A=(a_{ji})\in\mathfrak{so}_d(\C)$ such that
\begin{equation}\label{eq:mass-spatial-fixed}
a_{ji}^{(k)}=a_{ji}\quad\text{for all }i,j,k.
\end{equation}
If, in addition, \eqref{eq:mass-normalized} holds, then $A=0$ for $d\ne2$; for $d=2$ it is a scalar
multiple of the standard generator.
\end{lemma}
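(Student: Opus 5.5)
The plan is to compare coefficients at well-chosen test elements, exploiting three facts: after Lemma~\ref{lem:mass-one-string} the only surviving coefficients of $\Delta$ on the radical are cross-string terms at the same level; a rotation commutes with $\mathfrak{sl}_2$; and the Heisenberg pairing \eqref{eq:mass-pairing-local} lets us read off additional central coefficients. Throughout, for a test element $x$ I write $D_x=\ad a_x+\lambda_x\delta_M(+\mu_x\eta_M)$ and decompose $a_x=a_x^{\mathfrak{sl}}+R_x+v_x+c_xz$ with $R_x\in\sod$ and $v_x\in V_{\ell,d}$. Recall that after normalization $\Delta(h)=\Delta(e)=\Delta(f)=\Delta(z)=0$, and $\Delta(V)$ has no $z$-component and no same-string component.

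\emph{Step 1: skew-symmetry at a fixed level.} Fix $i\ne j$ and a level $k$, and test locality at $x=h+p_{i,k}+p_{j,k}$.
\begin{itemize}
\item Since $\Delta(x)$ has no reductive component, the $\mathfrak{sl}_2$-part of $a_x$ must commute with $h$, so it is a multiple of $h$. Together with $\lambda_x$, it therefore acts on both $p_{i,k}$ and $p_{j,k}$ by the same scalar.
\item The $v_x$-part contributes $[v_x,h]$, which lies in the sum of all weight spaces. Its $p_{i,k}$- and $p_{j,k}$-coefficients can be cancelled only against those same-level terms, and the weight $m-2k$ is non-zero.
\item Using the fact that the $p_{i,k}$- and $p_{j,k}$-coefficients of $\Delta(x)$ are $a^{(k)}_{ij}$ and $a^{(k)}_{ji}$, and that a second test element $h+p_{i,k}-p_{j,k}$ flips the sign of the cross terms, the scalar part separates from the $R_x$-part. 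This should give $a^{(k)}_{ji}=(R_x)_{ji}=-(R_x)_{ij}=-a^{(k)}_{ij}$.
\end{itemize}
If this coefficient bookkeeping turns out to be insufficient, I would add a multiple of $z$ to the test element and use the central coefficient. That coefficient is produced by the $p_{i,m-k}$- and $p_{j,m-k}$-components of $v_x$, via $I_{m-k}=-I_k$, and it must vanish.

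\emph{Step 2: independence of the level.} Next test locality at elements of the form $x=e+p_{i,k}+p_{j,k}$ and $x=f+p_{i,k}+p_{j,k}$. Since $\Delta(e)=0$, the reductive part of $a_x$ centralizes $e$ modulo terms that are killed by comparison, and the $v_x$-contribution $[v_x,e]$ is again confined by weights. The $p_{\bullet,k}$-components of $\Delta(x)$ must then come from a rotation $R_x$ together with an $e$-shift of components at level $k+1$. Comparing the $p_{j,k-1}$- and $p_{j,k}$-coefficients links $a^{(k)}_{ji}$ with $a^{(k-1)}_{ji}$, because $e$ commutes with $\rho(R_x)$ on $U_m\otimes W_d$. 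The cleanest version may instead take two-level test elements such as $e+p_{i,k}+p_{j,k-1}$, or the complementary pair $p_{i,k}+p_{j,m-k}$, where the central coefficient compares $a^{(k)}$ with $a^{(m-k)}$. Iterating over $k$ would give \eqref{eq:mass-spatial-fixed} with a skew matrix $A$. I expect this step to be the main obstacle. The difficulty is to choose test elements for which every unwanted contribution of $a_x$, especially $[v_x,\cdot]$ and the central terms, is provably separated by weights, and to handle the middle levels $|m-2k|=1$ separately, since there the $e$- and $f$-shifts land on the complementary level.

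\emph{Step 3: $A$ is central in $\sod$.} Now $\Delta|_{V}$ equals $\rho(A)$ modulo $\C z$, and $\pi\Delta$ vanishes on the reductive part. I would rerun the argument of Lemma~\ref{lem:noncentral-compatibility} modulo the centre, using test elements $x+tv$ with $x$ a rotation generator $s_{ij}$. For $d\ge3$ this forces $A\,\operatorname{span}\{\varepsilon_i,\varepsilon_j\}\subseteq\operatorname{span}\{\varepsilon_i,\varepsilon_j\}$ for all pairs $i<j$, and hence $A=0$ by skew-symmetry. For $d=1$ the matrix $A$ is vacuous. For $d=2$ no restriction remains, and $A$ is a multiple of the standard generator, as claimed.
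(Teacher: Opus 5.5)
\textbf{There is a genuine gap in Steps 1 and 2.} Your Step 3 is sound and is essentially the paper's argument: rotation test vectors $R+t\,v_k(w)$ with $w$ in a coordinate plane, projecting spatial parts onto that plane. The problem is not bookkeeping. The test elements you propose in Steps 1 and 2 impose no condition at all.

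At $x=h+p_{i,k}\pm p_{j,k}$ one has $\Delta(x)=y:=\Delta(p_{i,k})\pm\Delta(p_{j,k})$, which is a vector at level $k$. The single inner derivation $\ad w$ with $w=-(m-2k)^{-1}y$ already gives $[w,x]=y$. This uses two facts: $\rho(h)$ is invertible on $V_{\ell,d}$ because $m$ is odd, and $[w,p_{i,k}\pm p_{j,k}]=0$ because $2k\ne m$. So locality holds at these points for arbitrary $a^{(k)}_{ij},a^{(k)}_{ji}$. Adding $\gamma z$ changes nothing, since $\Delta(z)=0$ and the same $w$ works. The non-zero weight $m-2k$ you invoke is precisely what lets the radical part $v_x$ absorb the comparison.

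The same absorption occurs in Step 2 whenever $e$ or $f$ is in the test element. Here $\rho(e)$ maps level $k+1$ onto level $k$, and $\rho(f)$ maps level $k-1$ onto level $k$. At a middle level the single extra central condition is met by a suitable $\lambda_x$. Your complementary pair $p_{i,k}+p_{j,m-k}$ is a radical test point and does carry information. However, it only gives $a^{(k)}_{ji}=-a^{(m-k)}_{ij}$, through the rotation coefficient rather than the central one, so it links complementary levels only.

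Moreover, even with $h$ deleted, real combinations such as $p_{i,k}+p_{j,k}$ cannot give skew-symmetry. Each test point has its own scalar $c_x$ and rotation $R_x$. The equations $a^{(k)}_{ij}=c_x+(R_x)_{ij}$ and $a^{(k)}_{ji}=c_x-(R_x)_{ij}$ are therefore always solvable, so no second test point separates the scalar part from the rotation part.

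\textbf{What is missing.} You need to test on the radical alone, where the $V$-part of the pointwise inner element contributes only central terms, and to use an \emph{isotropic} spatial vector.
\begin{itemize}
\item For skew-symmetry, take $x=p_{r,k}+\mathrm i\,p_{t,k}$. The level-$k$ coefficients give $\mathrm i\,a^{(k)}_{rt}=c+\mathrm i\,q_{rt}$ and $a^{(k)}_{tr}=-q_{rt}+\mathrm i\,c$. Multiplying the first by $\mathrm i$ yields $a^{(k)}_{tr}=-a^{(k)}_{rt}$.
\item For level independence, take one string at two levels, $x=p_{r,k}+p_{r,k+1}$. The $\mathfrak{sl}_2$- and scalar parts never change the string index. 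So for $t\ne r$ both $a^{(k)}_{tr}$ and $a^{(k+1)}_{tr}$ equal the same rotation coefficient $q_{tr}$.
\end{itemize}

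Note also that your Step 3 works level by level. Each $A^{(k)}$ has zero diagonal by Lemma~\ref{lem:mass-one-string}, so Step 3 already gives $A^{(k)}=0$ for $d\ge3$ without Steps 1--2. The gap therefore matters only for $d=2$. There, however, Steps 1--2 are the entire content of the lemma, and they are exactly what Lemma~\ref{lem:mass-rotation-images}(ii) needs.
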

\begin{proof}
Fix $r<t$ and a level $k$. Apply locality to
\[
x=p_{r,k}+\mathrm i p_{t,k}.
\]
Let $Q=(q_{ji})\in\mathfrak{so}_d$ be the rotation component of the attached pointwise inner
derivation and put
\[
c=(m-2k)\alpha_h+\lambda
\]
for the common same-level scalar contribution of its $h$- and $\delta_M$-parts.
Comparing the $p_{r,k}$- and $p_{t,k}$-coefficients gives
\[
\mathrm i\,a_{rt}^{(k)}=c+\mathrm i q_{rt},
\qquad
a_{tr}^{(k)}=-q_{rt}+\mathrm i c.
\]
Eliminating $c$ and $q_{rt}$ yields
$a_{tr}^{(k)}=-a_{rt}^{(k)}$.
Thus the matrix $A^{(k)}=(a_{ji}^{(k)})$ is skew-symmetric.

Next use
\[
x=p_{r,k}+p_{r,k+1}\qquad(0\le k<m).
\]
For every $t\ne r$, the coefficients of $p_{t,k}$ and $p_{t,k+1}$ are produced by one and the same
rotation coefficient $q_{tr}$; the $\mathfrak{sl}_2$- and scalar parts do not change the spatial index.
Hence $a_{tr}^{(k)}=a_{tr}^{(k+1)}$. Therefore
$A^{(0)}=\cdots=A^{(m)}=:A$.

Assume now $d\ge3$ and take a rotation generator $R=M_{ab}$. Let
$P_{ab}=\operatorname{span}\{\varepsilon_a,\varepsilon_b\}$ and take
$w\in P_{ab}$ and
\[
v_k(w)=\sum_{r=1}^d w_r p_{r,k}.
\]
For each non-zero $t$, apply locality to $R+t\,v_k(w)$. The quotient part of the left-hand side is
zero, so the rotation component $Q_t$ of the attached pointwise derivation satisfies
$[Q_t,R]=0$. Hence $Q_t$ preserves $P_{ab}$. The same is true for
$R(V)$, while the $\mathfrak{sl}_2$- and scalar parts preserve the spatial vector $w$.
Consequently the level-$k$ spatial component of
$\Delta(R)+t\,A w$ belongs to $P_{ab}$ for every non-zero $t$. Taking two different values of $t$
and subtracting shows $A(P_{ab})\subseteq P_{ab}$.
This holds for every coordinate two-plane. For $d\ge3$,
\[
A\varepsilon_a\in\bigcap_{b\ne a}P_{ab}=\C\varepsilon_a,
\]
and skew-symmetry then gives $A\varepsilon_a=0$ for every $a$. Thus $A=0$.
For $d=1$ there is no rotation part. For $d=2$ every skew matrix is a scalar multiple of the
standard generator $s=-\mathrm iM_{12}$.
\end{proof}

\begin{lemma}\label{lem:mass-rotation-images}
Assume \eqref{eq:mass-normalized}.
\begin{enumerate}[label=\textup{(\roman*)}]
\item If $d\ge3$, then $\Delta$ vanishes on $\sod$.
\item If $d=2$, then there are $a,b\in\C$ such that
\[
\Delta|_{V_{\ell,2}}=a\,\ad s|_{V_{\ell,2}},
\qquad
\Delta(s)=bz.
\]
\end{enumerate}
\end{lemma}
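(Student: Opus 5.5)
The plan is to combine the normal form obtained so far with one additional family of test elements, namely rotations perturbed by elements of $\mathfrak{sl}_2$.

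\emph{Reduction to the values on $\sod$.} By Lemma~\ref{lem:mass-one-string}, $\Delta(e)=\Delta(f)=\Delta(h)=\Delta(z)=0$ and $\Delta(p_{i,k})=\sum_{j\ne i}a_{ji}^{(k)}p_{j,k}$. By Lemma~\ref{lem:mass-spatial}, $a^{(k)}_{ji}=a_{ji}$ for a single matrix $A$. This matrix vanishes for $d\ge3$ and is a multiple of the generator of $\mathfrak{so}_2$ for $d=2$. In the second case the restriction $\Delta|_{V_{\ell,2}}$ is therefore $a\,\ad s|_{V_{\ell,2}}$ for some $a\in\C$, which settles the first half of (ii). It remains to determine $\Delta(R)$ for a rotation generator $R$ ($R=M_{ab}$ if $d\ge3$, $R=s$ if $d=2$). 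By \eqref{eq:mass-normalized}, $\pi\Delta(R)=0$, so $\Delta(R)=v_R+\beta_R z$ with $v_R\in V_{\ell,d}$.

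\emph{Locality at $R+ta$.} For $a\in\mathfrak{sl}_2$ and $t\in\C$, locality at $x=R+ta$ uses $\Delta(ta)=0$ and gives $\Delta(x)=\Delta(R)$. Write the pointwise derivation as $D_x=\ad y+\lambda\delta_M(+\mu\eta_M)$ and split $y=c+v+\gamma z$ with $c$ reductive and $v\in V$.
\begin{enumerate}[label=\textup{(\alph*)}]
\item The reductive component of $\Delta(x)$ is zero, so $[c,x]=0$.
\item $\delta_M$ vanishes on $x$, and $[v,x]=-\rho(x)v$ lies in $V$ with no central term, since $x$ has no radical component.
\end{enumerate}
Consequently
\[
v_R\in\rho(R+ta)V_{\ell,d}\qquad\text{for every } a\in\mathfrak{sl}_2,\ t\in\C,
\]
and the $z$-component of $\Delta(R)$ can only come from $\mu\eta_M(x)$. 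For $d\ge3$ there is no $\eta_M$, so $\beta_R=0$. For $d=2$, $\beta_s=:b$ is allowed, which is exactly the term $\Delta(s)=bz$ in (ii).

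\emph{Killing $v_R$.} Decompose $W_d$ into the eigenlines $w_\pm$ of $R$ in its rotation plane, with eigenvalues $\pm c_0\ne0$, and the complement on which $R$ acts by zero (this complement is absent for $d=2$).
\begin{itemize}
\item On the complement, $\rho(R+ta)$ acts as $t\rho_m(a)$. Letting $a$ run over the conjugates of $e$ and applying Lemma~\ref{lem:intersection-images}, the complementary components of $v_R$ vanish.
\item On $U_m\otimes w_\pm$ the operator is $t\rho_m(h)\pm c_0I$, taking $a=h$. Since $m$ is odd, all weights $m-2k$ are non-zero. For each $k$ choose $t=\mp c_0/(m-2k)$; then the image of $t\rho_m(h)\pm c_0I$ misses the $u_k$-coordinate, so that coordinate of the $w_\pm$-component of $v_R$ vanishes. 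Running over all $k$ kills these components, exactly as in the proof of Lemma~\ref{lem:range-local-gl2}.
\end{itemize}
Hence $v_R=0$, so $\Delta(M_{ab})=0$ for every generator when $d\ge3$, which gives (i) by linearity, and $\Delta(s)=bz$ when $d=2$.

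\emph{Main obstacle.} The delicate point is the bookkeeping in (a)--(b). One must check that no hidden central or radical contribution arises from the reductive part $c$ of the pointwise element, and that $\eta_M$ is the only source of $z$. I would verify this directly from the bracket table, noting that $[c,x]=0$ kills the $c$-contribution entirely.
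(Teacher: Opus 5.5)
Your proof is correct. For $d=2$ it is the paper's argument: locality at $th+s$, simultaneous diagonalization of $\rho(h)$ and $\rho(s)$, and the choice $t=\mp c_0/(m-2k)$, which is available because $m$ is odd.

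For $d\ge3$ you take a different route. The paper first invokes Lemma~\ref{lem:mass-spatial} to get $\Delta(V_{\ell,d})=0$. It then tests at $R+p_{a,r}$ and $R+p_{b,r}$ with $r=m-k$. The mass pairing $[p_{a,k},p_{a,m-k}]=I_kz$ forces the complementary-level radical coefficients of the pointwise inner element to vanish, and this kills the coordinates of $\Delta(R)$ level by level. The two middle levels, where the $e$- and $f$-terms could interfere, need a separate treatment.

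You instead apply the $d=2$ weight argument to $R+th$ for every rotation generator. This handles all $d\ge2$ uniformly and avoids the middle-level case split. It also needs neither $\Delta(V_{\ell,d})=0$ nor the cocycle at this stage; the cocycle enters only through $\Delta|_{\mathfrak{sl}_2}=0$ from Lemma~\ref{lem:mass-one-string}. The paper's version has the merit of showing directly how the Heisenberg pairing rigidifies the rotation images.

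One simplification: the complementary components do not need Lemma~\ref{lem:intersection-images}. Locality at $R$ itself ($t=0$) already gives $v_R\in\rho(R)V_{\ell,d}=U_m\otimes\operatorname{span}\{\varepsilon_a,\varepsilon_b\}$, exactly as the paper observes at the start of its $d\ge3$ argument.
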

\begin{proof}
First suppose $d\ge3$. By Lemma~\ref{lem:mass-spatial}, $\Delta(V_{\ell,d})=0$.
Fix $R=M_{ab}$. Since the quotient action of $\Delta$ is zero, locality at $R$ gives
\[
\Delta(R)\in [V_{\ell,d},R]\subset
\operatorname{span}\{p_{a,k},p_{b,k}:0\le k\le m\},
\]
and no central component can occur. Write
\[
\Delta(R)=\sum_{k=0}^m
\bigl(\alpha_kp_{a,k}+\beta_kp_{b,k}\bigr).
\]

Fix a target level $k$ and put $r=m-k$. Consider first
\[
y=R+p_{a,r}.
\]
Write the level-$k$ part of the radical component of the attached inner element as
\[
c_a p_{a,k}+c_b p_{b,k}+u',
\]
where $u'$ contains no $p_{a,k}$- or $p_{b,k}$-term. Since $k+r=m$, the only central contribution involving these two coefficients is
\[
[c_a p_{a,k},p_{a,r}]=c_a I_k z.
\]
The left-hand side $\Delta(R)$ has no central component, so $c_a=0$. Moreover,
\[
[c_a p_{a,k}+c_b p_{b,k},R]=c_a p_{b,k}-c_b p_{a,k}.
\]
If $k\notin\{r-1,r,r+1\}$, neither the $\mathfrak{sl}_2$-part nor the scalar part of the pointwise derivation acting on $p_{a,r}$ can produce a level-$k$ vector. Therefore the $p_{b,k}$-coordinate of $\Delta(R)$ is $c_a=0$, and hence $\beta_k=0$. Repeating the argument with
\[
y=R+p_{b,r}
\]
forces $c_b=0$ and then $\alpha_k=0$. Thus $\alpha_k=\beta_k=0$ away from the two middle levels.

It remains to treat the middle pair. Write $m=2s+1$. For $y=R+p_{a,s}$ the complementary level is $s+1$; the central coefficient forces the $p_{a,s+1}$-coefficient of the radical inner part to vanish. The $f$-term applied to $p_{a,s}$ stays in the $a$-string, so it cannot contribute to the $p_{b,s+1}$-coordinate. Hence $\beta_{s+1}=0$. The test vector $R+p_{b,s}$ similarly gives $\alpha_{s+1}=0$. Using $R+p_{a,s+1}$ and $R+p_{b,s+1}$, with the $e$-term now preserving the spatial index, gives $\beta_s=\alpha_s=0$. Consequently $\Delta(R)=0$. Since the $M_{ab}$ span $\sod$, part~(i) follows.

Now let $d=2$. Lemma~\ref{lem:mass-spatial} gives
$\Delta|_V=a\,\ad s|_V$. Subtract $a\ad s$ and keep the same notation for the resulting local
derivation. Then
\[
\Delta(e)=\Delta(f)=\Delta(h)=\Delta(z)=0,
\qquad
\Delta(V)=0.
\]
At $s$ itself, the pointwise formula gives
$\Delta(s)=w+bz$ with $w\in V$.
For every $t\in\C$, locality at $th+s$ implies that the $V$-part $w$ belongs to
$\operatorname{Im}\rho(th+s)$. Over $\C$, the commuting operators $\rho(h)$ and $\rho(s)$ are
simultaneously diagonalizable. Their joint eigenvalues on the two complex spatial strings are
\[
(m-2k,+1),\qquad (m-2k,-1),
\qquad 0\le k\le m.
\]
Because $m$ is odd, $m-2k\ne0$. For each joint eigenspace choose
$t=\mp(m-2k)^{-1}$ so that $th+s$ vanishes on that eigenspace. Since $th+s$ is diagonalizable,
its image does not contain that eigenspace. Hence the corresponding component of $w$ is zero.
All components vanish, so $w=0$ and $\Delta(s)=bz=b\eta_M(s)$.
Restoring the subtracted $a\ad s$ proves (ii).
\end{proof}

\begin{theorem}\label{thm:local-mass}
Every local derivation on the mass central extension is a derivation. More precisely,
\begin{equation}\label{eq:locder-mass}
\LocDer\widehat{\g}^{\,M}_\ell(d)
=\Der\widehat{\g}^{\,M}_\ell(d),
\qquad \ell\in\mathbb N-\frac12.
\end{equation}
\end{theorem}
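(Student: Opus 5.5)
The plan is to assemble the lemmas of this subsection. Since every derivation is a local derivation, only the inclusion $\LocDer\subseteq\Der$ needs proof. Let $\Delta$ be a local derivation of $\widehat{\g}^{\,M}_\ell(d)$. By Lemma~\ref{lem:mass-normalization} there is $D_0\in\Der\widehat{\g}^{\,M}_\ell(d)$ such that $\Delta_0=\Delta-D_0$ satisfies \eqref{eq:mass-normalized}. The map $\Delta_0$ is again local, because the sum of a local derivation and a derivation is local. It therefore suffices to show that $\Delta_0$ is a derivation; we aim to show it is zero for $d\ne2$ and a combination of $\ad s$ and $\eta_M$ for $d=2$.

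First, Lemma~\ref{lem:mass-one-string} applies to $\Delta_0$. For $\ell\ge\frac52$ it uses \cite{AYCG}, and for $\ell\in\{\frac12,\frac32\}$ it uses Lemma~\ref{lem:mass-low-parameters}. It gives $\Delta_0(e)=\Delta_0(f)=0$, and each $\Delta_0(p_{i,k})$ is a combination of same-level vectors in the other strings. Together with the normalization, $\Delta_0$ then vanishes on $h,e,f,z$.

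Lemma~\ref{lem:mass-spatial} next shows that these same-level coefficients come from one skew matrix $A$. We have $A=0$ when $d\ne2$, and $A$ is a multiple of the generator $s$ when $d=2$. We now split by dimension.
\begin{itemize}
\item For $d=1$ there is no rotation part, so $\Delta_0$ vanishes on the whole basis $\{e,f,h,z,p_{1,k}\}$ and hence $\Delta_0=0$.
\item For $d\ge3$, we have $\Delta_0(V_{\ell,d})=0$. Lemma~\ref{lem:mass-rotation-images}(i) gives $\Delta_0(\sod)=0$, so again $\Delta_0=0$.
\item For $d=2$, Lemma~\ref{lem:mass-rotation-images}(ii) gives $\Delta_0|_V=a\,\ad s|_V$ and $\Delta_0(s)=bz$, with $\Delta_0$ zero on $h,e,f,z$. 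We then compare with $a\,\ad s+b\,\eta_M$. The map $\ad s$ kills $h,e,f,z$ and $s$, acts as $a\,\ad s$ on $V$, and $\eta_M(s)=M=z$. Hence $\Delta_0=a\,\ad s+b\,\eta_M$, which is a derivation by Theorem~\ref{thm:massder}.
\end{itemize}
In all cases $\Delta=D_0+\Delta_0\in\Der\widehat{\g}^{\,M}_\ell(d)$, which proves \eqref{eq:locder-mass}.

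The main obstacle is bookkeeping: the normalization produced in Lemma~\ref{lem:mass-normalization} must match exactly the hypotheses of the later lemmas. Two points need checking. First, $\Delta_0(h)=0$ must be available before Lemma~\ref{lem:mass-one-string} is applied. Second, for $d=2$ the $\sod$-component must be handled via $\Delta_0(s)$ rather than through $\pi$; $\pi\Delta_0(s)=0$ holds because every derivation induces zero on $\C s$. Beyond this, the argument is a direct concatenation of the preceding results, and no new computation is expected.
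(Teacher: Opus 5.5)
Your proposal is correct and is essentially the paper's proof: you normalize with Lemma~\ref{lem:mass-normalization}, remove the one-string part with Lemma~\ref{lem:mass-one-string} (via \cite{AYCG} and Lemma~\ref{lem:mass-low-parameters}), and reduce the cross-string terms to a single skew matrix with Lemma~\ref{lem:mass-spatial}. You then finish with Lemma~\ref{lem:mass-rotation-images}, split into $d=1$, $d\ge3$ and $d=2$; your explicit check that the normalized map equals $a\,\ad s+b\,\eta_M$ for $d=2$ only spells out what the paper states in one line.
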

\begin{proof}
Let $\Delta$ be a local derivation and apply Lemma~\ref{lem:mass-normalization}. Thus, after
subtracting a global derivation, we may suppose \eqref{eq:mass-normalized}.
Lemma~\ref{lem:mass-one-string} reduces the action on the Heisenberg module to cross-spatial
same-level terms, and Lemma~\ref{lem:mass-spatial} eliminates these terms for $d\ge3$.
Lemma~\ref{lem:mass-rotation-images} then gives $\Delta=0$.

For $d=1$, the normalized map already vanishes by the one-string argument in
Lemma~\ref{lem:mass-one-string}. For $d=2$, Lemma~\ref{lem:mass-rotation-images} yields
$\Delta=a\ad s+b\eta_M$, which is a derivation. Restoring the derivation subtracted in the
normalization proves \eqref{eq:locder-mass}.
\end{proof}

\begin{remark}\label{rem:mass-technique}
The mass proof separates into two independent rigidity mechanisms. The one-string projection uses the
known local rigidity of the $\ell$-conformal Galilei algebra. The genuinely new spatial step uses paired
vectors such as
\[
p_{r,k}+\mathrm i p_{t,k}
\quad\text{and}\quad
p_{r,k}+p_{r,k+1},
\]
which force the remaining coefficients to form one fixed skew-symmetric matrix. The mixed
rotation--Heisenberg test vectors then show that this matrix is compatible with the full rotation algebra
only in the $d=2$ central direction.
\end{remark}

\subsection{Local derivations on the exotic central extension}\label{subsec:local-exotic}
We now analyze the exotic extension using the derivation normal form obtained above. The alternating central pairing of the $p$- and $q$-strings provides the rigidity mechanism, while the adapted complex basis allows the local coefficients to be compared level by level.

By Theorem~\ref{thm:exoticder}, for every
$x\in\widehat{\g}^{\,\Theta}_\ell(2)$ there are
$a_x\in\widehat{\g}^{\,\Theta}_\ell(2)$ and
$\lambda_x,\mu_x\in\C$ such that
\begin{equation}\label{eq:localTheta}
\Delta(x)=[a_x,x]+\lambda_x\delta_\Theta(x)+\mu_x\eta_\Theta(x).
\end{equation}
Since the central vector $z$ does not contribute to an inner derivation, we may always write
\begin{equation}\label{eq:pointwise-exotic}
a_x=b_e e+b_f f+b_hh+b_ss+
\sum_{r=0}^{2\ell}b_{p_r}p_r+
\sum_{r=0}^{2\ell}b_{q_r}q_r.
\end{equation}
All coefficients in \eqref{eq:pointwise-exotic}, as well as $\lambda_x$ and $\mu_x$, are allowed to depend on the point $x$.

\begin{lemma}\label{lem:localTheta-support}
Let $\Delta\in\LocDer\widehat{\g}^{\,\Theta}_\ell(2)$. Then
\begin{align*}
\Delta(e)&\in\operatorname{span}\{e,h,p_r,q_r:0\le r\le2\ell-1\},\\
\Delta(f)&\in\operatorname{span}\{f,h,p_r,q_r:1\le r\le2\ell\},\\
\Delta(h)&\in\operatorname{span}\{e,f,p_r,q_r:r\neq\ell\},\\
\Delta(s)&\in\operatorname{span}\{p_r,q_r,z:0\le r\le2\ell\},\\
\Delta(z)&\in\C z,
\end{align*}
and, for every $0\le k\le2\ell$,
\begin{equation}\label{eq:localTheta-support-pq}
\Delta(p_k)\in\operatorname{span}\{p_{k-1},p_k,p_{k+1},z\},\qquad
\Delta(q_k)\in\operatorname{span}\{q_{k-1},q_k,q_{k+1},z\}.
\end{equation}
Here the terms with indices $-1$ and $2\ell+1$ are omitted.
\end{lemma}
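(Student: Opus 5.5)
The plan is to read off every inclusion directly from the pointwise formula \eqref{eq:localTheta}, with $a_x$ expanded as in \eqref{eq:pointwise-exotic}, and the bracket table \eqref{eq:exotic-table}. For a basis vector $x$ we have $\Delta(x)=[a_x,x]+\lambda_x\delta_\Theta(x)+\mu_x\eta_\Theta(x)$. It therefore suffices to list, for each basis element $x$, the possible contributions of each summand of $a_x$ in $[a_x,x]$, together with the values of $\delta_\Theta(x)$ and $\eta_\Theta(x)$. The support of $\Delta(x)$ is contained in the union of these contributions.

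The reductive part is handled as follows.
\begin{itemize}
\item For $x=e$: $[h,e]=2e$, $[f,e]=-h$, $[s,e]=0$, and $[p_r,e]=-rp_{r-1}$, $[q_r,e]=-rq_{r-1}$. Hence the radical contributions lie in levels $0,\dots,2\ell-1$. Both $\delta_\Theta(e)$ and $\eta_\Theta(e)$ vanish, and no $z$ appears because $[\mathfrak s,V]\subseteq V$. This gives the stated span.
\item For $x=f$: the same argument with $[p_r,f]=-(2\ell-r)p_{r+1}$ gives levels $1,\dots,2\ell$.
\item For $x=h$: $[e,h]=-2e$, $[f,h]=2f$, and $[p_r,h]=-2(\ell-r)p_r$. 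The last coefficient vanishes exactly for $r=\ell$, which gives $r\neq\ell$.
\item For $x=s$: $s$ is central in $\mathfrak{sl}_2\oplus\C s$, so only $[p_r,s]=-p_r$ and $[q_r,s]=q_r$ contribute. In addition, $\eta_\Theta(s)=z$ supplies the $z$-term, while $\delta_\Theta(s)=0$.
\item For $x=z$: $z$ is central, so $[a_x,z]=0$. Also $\delta_\Theta(z)=2z$ and $\eta_\Theta(z)=0$, giving $\Delta(z)\in\C z$.
\end{itemize}

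For $x=p_k$ we go through the terms of $a_x$ one at a time. The $e$- and $f$-terms give $kp_{k-1}$ and $(2\ell-k)p_{k+1}$. The $h$- and $s$-terms give multiples of $p_k$. The $p_r$-terms give zero, since the $p$-string is abelian. The $q_r$-terms give $[q_r,p_k]=-[p_k,q_r]$, which is nonzero only for $r=2\ell-k$ and is then a multiple of $z$. Finally $\delta_\Theta(p_k)=p_k$ and $\eta_\Theta(p_k)=0$. This yields \eqref{eq:localTheta-support-pq}, and the computation for $q_k$ is symmetric.

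The only point needing care is this last step for $p_k$ and $q_k$. One must check that no $q$-component ever appears in $\Delta(p_k)$, and no $p$-component in $\Delta(q_k)$. This holds because neither $\mathfrak{sl}_2$ nor $s$ mixes the two strings, and the Heisenberg bracket only produces $z$. The remaining work is routine bookkeeping, with boundary indices $-1$ and $2\ell+1$ omitted by convention.
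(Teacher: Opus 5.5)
Your proposal is correct and follows the same route as the paper: you substitute the pointwise form $\Delta(x)=[a_x,x]+\lambda_x\delta_\Theta(x)+\mu_x\eta_\Theta(x)$ at each basis vector and read off the possible supports from the bracket table. As in the paper, the only non-obvious contribution is that the complementary $q_{2\ell-k}$ (resp.\ $p_{2\ell-k}$) component of $a_x$ adds just a multiple of $z$ to $\Delta(p_k)$ (resp.\ $\Delta(q_k)$); your write-up is simply a more detailed version of the paper's argument, including the vanishing of $[p_r,h]$ exactly at $r=\ell$ and $\eta_\Theta$ being non-zero only at $s$.
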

\begin{proof}
Insert the pointwise form \eqref{eq:localTheta}--\eqref{eq:pointwise-exotic} into the multiplication table \eqref{eq:exotic-table}. The elements $e$ and $f$ shift the $p$- and $q$-levels by one, while $h$ and $s$ preserve the level. A complementary $q_{2\ell-k}$-component of $a_x$ can add only a central multiple of $z$ to $\Delta(p_k)$, and similarly a complementary $p_{2\ell-k}$-component can add only $z$ to $\Delta(q_k)$. Finally, $z$ is central, $\delta_\Theta(z)=2z$, and $\eta_\Theta$ is non-zero only at $s$.
\end{proof}

\begin{lemma}\label{lem:ez-zero}
If $\Delta$ is a local derivation and $\Delta(e+z)=0$, then
\[
\Delta(e)=\Delta(z)=0.
\]
\end{lemma}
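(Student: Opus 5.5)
The statement asks us to split $\Delta(e+z)=0$ into its two summands. The plan is to use linearity and the support description of Lemma~\ref{lem:localTheta-support}. Since $\Delta$ is linear,
\[
0=\Delta(e+z)=\Delta(e)+\Delta(z).
\]
The same lemma gives
\[
\Delta(e)\in\operatorname{span}\{e,h,p_r,q_r:0\le r\le 2\ell-1\},
\qquad \Delta(z)\in\C z.
\]
So it suffices to check that these two subspaces intersect only in $0$. Once that is done, the equation $\Delta(e)=-\Delta(z)$ forces both sides to vanish.

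The intersection is trivial because $z$ is a basis vector of the chosen basis $\{e,f,h,s,p_n,q_n,z\}$. It does not occur among the spanning vectors of the first subspace, so any element of $\C z$ lying in that span has zero $z$-coordinate. Hence $\Delta(z)=-\Delta(e)$ has vanishing $z$-coefficient, and therefore $\Delta(z)=0$. Consequently $\Delta(e)=0$ as well.

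The only point needing care is that the support lemma already excludes a $z$-component in $\Delta(e)$, so I would re-check this directly from \eqref{eq:localTheta}. In $[a_x,e]$, brackets with $e$ never produce $z$: the Heisenberg pairing only involves two radical vectors, and $e\notin\mathfrak h_\ell$. The term $\delta_\Theta(e)$ is $0$, and $\eta_\Theta(e)=0$ because $\eta_\Theta$ is non-zero only at $s$. Likewise, $\Delta(z)\in\C z$ holds because $[a_x,z]=0$, $\delta_\Theta(z)=2z$ and $\eta_\Theta(z)=0$. With these two inclusions confirmed, the argument is a one-line coordinate comparison, and no genuine obstacle is expected.
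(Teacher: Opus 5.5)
Your proposal is correct and is essentially the paper's argument: linearity gives $\Delta(e)=-\Delta(z)$, and Lemma~\ref{lem:localTheta-support} places $\Delta(e)$ in a span of basis vectors that excludes $z$ while $\Delta(z)\in\C z$, so both must vanish. Your direct re-check of these two inclusions from the pointwise form \eqref{eq:localTheta} is accurate; it only makes explicit what the support lemma already records.
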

\begin{proof}
By Lemma~\ref{lem:localTheta-support}, the vectors $\Delta(e)$ and $\Delta(z)$ belong to complementary spans of the fixed basis: the first has no $z$-component and the second is contained in $\C z$. Since $\Delta$ is linear,
$0=\Delta(e+z)=\Delta(e)+\Delta(z)$, so both terms vanish.
\end{proof}

\begin{lemma}\label{lem:e-zero-exotic}
Let $\Delta$ be a local derivation with $\Delta(e)=0$. Then:
\begin{enumerate}[label=\textup{(\roman*)}]
\item for $0\le k\le2\ell-1$,
\[
\Delta(p_k)\in\operatorname{span}\{p_{k-1},p_k,p_{k+1}\},\qquad
\Delta(q_k)\in\operatorname{span}\{q_{k-1},q_k,q_{k+1}\};
\]
\item
\[
\Delta(h)\in\operatorname{span}\{e,f,p_0,q_0\};
\]
\item there exists $v\in\widehat{\g}^{\,\Theta}_\ell(2)$ such that
$[v,e]=0$ and $\Delta(f)=[v,f]$.
\end{enumerate}
\end{lemma}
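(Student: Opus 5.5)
The plan is to exploit locality at test elements of the form $e+y$ and $e+th+y$, where $y$ lies in the Heisenberg ideal or in $\C h\oplus\C f$. Since $\Delta(e)=0$, the value $\Delta(e+y)=\Delta(y)$ is controlled by Lemma~\ref{lem:localTheta-support}. On the other side, the pointwise derivation \eqref{eq:localTheta}--\eqref{eq:pointwise-exotic} contributes $[a_x,e]$, and the components of $[a_x,e]$ can be read off coordinate by coordinate:
\[
[b_ff,e]=-b_fh,\qquad [b_hh,e]=2b_he,\qquad [b_{p_r}p_r,e]=-rb_{p_r}p_{r-1},\qquad [b_{q_r}q_r,e]=-rb_{q_r}q_{r-1}.
\]
Also $\delta_\Theta(e)=\eta_\Theta(e)=0$. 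The aim is to use these $e$-contributions to kill the $z$-coefficients and the far-away levels.

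For (i), fix $0\le k\le 2\ell-1$ and apply locality at $x=e+tp_k$ with $t\ne0$. Then $t\Delta(p_k)=[a_x,e]+t[a_x,p_k]+t\lambda_xp_k$. By Lemma~\ref{lem:localTheta-support} the left side has no $e$-, $h$- or $q$-components. The term $t[a_x,p_k]$ produces $q$-components only through $z$, so the $q$-components of $[a_x,e]$ must vanish. This forces $b_{q_j}=0$ for all $j\ge1$. The $z$-coefficient of the right side equals $t\,b_{q_{2\ell-k}}(-1)^{k}\ldots$ up to the nonzero constant $J$. Since $2\ell-k\ge1$, this coefficient is zero, and hence $\Delta(p_k)$ has no $z$-term. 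The $q_k$ case is symmetric: the $p$-components of $[a_x,e]$ vanish, which kills $b_{p_{2\ell-k}}$.

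For (ii), I would apply locality at $x=e+th+\sum_k(\alpha_kp_k+\beta_kq_k)$ with suitable generic coefficients, and compare the result with what (i) already says about $\Delta(p_k)$ and $\Delta(q_k)$. The $z$-coefficient of the right side is a combination of the $b_{q_{2\ell-k}}$ and $b_{p_{2\ell-k}}$ weighted by $\alpha_k$ and $\beta_k$. The left side's $z$-coefficient is known, coming only from $\Delta(p_{2\ell}),\Delta(q_{2\ell})$. Choosing single radical vectors (one $p_k$ or $q_k$ at a time) should force the relevant $b$'s to vanish. The coefficient of $p_r$ in $[a_x,e+th]$ is $-(r+1)b_{p_{r+1}}-2t(\ell-r)b_{p_r}$. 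Once the top $b$'s vanish, this gives a triangular recursion in $r$ which propagates the vanishing downward. The conclusion is that the $p_r$- and $q_r$-coefficients of $\Delta(h)$ vanish for $r\ge1$; the $e$-, $f$-, $p_0$- and $q_0$-terms survive. I expect this step to be the main obstacle, because a single test element $e+th$ gives no constraint: the linear system in the $b_{p_r}$ is always solvable. The rigidity has to come from coupling with the Heisenberg pairing, and choosing the radical perturbation so that the triangular system closes up is the delicate point. If needed I would also test with $h+e+p_{k}+q_{2\ell-k}$-type sums, mirroring the $\ell=\frac32$ computation of Lemma~\ref{lem:mass-low-parameters}.

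For (iii), observe that the centralizer of $e$ contains $\operatorname{span}\{e,s,p_0,q_0,z\}$. Since $[e,f]=h$, $[s,f]=0$, $[p_0,f]=-2\ell p_1$ and $[q_0,f]=-2\ell q_1$, it suffices to show $\Delta(f)\in\operatorname{span}\{h,p_1,q_1\}$. Then $v=-c_he+c_{p}(2\ell)^{-1}p_0+c_q(2\ell)^{-1}q_0$, with appropriate signs, does the job. To prove this inclusion I would apply locality at $f+te$ and at $f+te+p_k$, and compare coefficients as polynomials in $t$ using (i) and (ii). The $h$-coefficient of $t[a_x,e]$ and of $[a_x,f]$ must match. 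The $f$-coefficient of $[a_x,f]$ is $-2b_h$, while the $e$-coefficient is $2tb_h$; since the left side $\Delta(f)$ has no $e$-term, $b_h=0$, and then the $f$-coefficient of $\Delta(f)$ vanishes. The higher $p_r,q_r$ components ($r\ge2$) should be eliminated by the same level-shift argument as in (i), since $[a_x,e]$ and $[a_x,f]$ move levels in opposite directions.
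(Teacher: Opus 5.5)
Your part (i) is correct and is the paper's argument. The gap is in (ii), and in (iii) at the step that removes the $p_r,q_r$ ($r\ge2$) terms of $\Delta(f)$. Keeping $e$ in the test element couples neighbouring levels, so the single condition supplied by the Heisenberg pairing is absorbed by a free parameter rather than propagating.

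At $x=e+th+p_k$ ($t\ne0$, $k\le2\ell-1$) the $q$-part of the locality equation is
\[
tH_{q_r}=-(r+1)b_{q_{r+1}}-2t(\ell-r)b_{q_r}\qquad(0\le r\le2\ell),
\]
where $H_{q_r}$ is the $q_r$-coefficient of $\Delta(h)$. The pairing only adds the condition $b_{q_{2\ell-k}}=0$.
\begin{itemize}
\item The equations with $r<\ell$ determine $b_{q_1},\dots,b_{q_\ell}$ from the free parameter $b_{q_0}$, and $b_{q_j}$ contains $(-2t)^j\binom{\ell}{j}b_{q_0}\ne0$. So for $j=2\ell-k\le\ell$ the condition merely fixes $b_{q_0}$.
\item For $j>\ell$ the condition involves only $H_{q_j},\dots,H_{q_{2\ell}}$.
\item Hence $H_{q_1},\dots,H_{q_{\ell-1}}$ are never constrained. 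For $\ell=2$, the linear map with $\Delta(h)=q_1$ and all other basis vectors sent to $0$ satisfies the pointwise condition at every $e+th+p_k$ and $e+th+q_k$, and even at $h+e+p_k+q_{4-k}$, yet it violates (ii).
\end{itemize}

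The same failure occurs in (iii). At $f+te+p_k$ the $q$-part reads $F_q=-[f+te,b_q]$, where $F_q$ is the $q$-part of $\Delta(f)$ and $b_q=\sum_j b_{q_j}q_j$. The kernel of $\ad(f+te)$ on the $q$-string is spanned by a vector that is non-zero at every even level. So the extra condition constrains at most the even-level coordinates of $F_q$. For $\ell=2$, the map with $\Delta(f)=q_3$ and zero elsewhere passes all your tests $f+te$, $f+te+p_k$, $f+te+q_k$ with $t\ne0$, but violates (iii).

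Two smaller errors. First, $e+th$ alone is not vacuous: for $\ell\in\mathbb N$, $\ad(e+th)$ is singular on each string. Second, you cannot compare coefficients ``as polynomials in $t$'', because $a_x$ depends arbitrarily on $x$.

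The missing idea is to drop $e$, so that exactly one coefficient of $a_x$ reaches the target coordinate.
\begin{itemize}
\item For (ii), use $h+p_k$ with $0\le k\le2\ell-1$. The left side has no $z$, by the support lemma and (i), so $b_{q_{2\ell-k}}=0$. The $q$-part is now diagonal, $H_{q_r}=-2(\ell-r)b_{q_r}$, so $H_{q_{2\ell-k}}=0$. Symmetrically, $h+q_k$ kills $H_{p_{2\ell-k}}$. Only $p_0,q_0$ survive, because $\Delta(p_{2\ell})$ and $\Delta(q_{2\ell})$ may carry a $z$-term.
\item For (iii), given $2\le r\le2\ell$, put $k=2\ell-r+1$ and use $f+q_k$ on its own; this is the $t=0$ member of your family. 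The $z$-coefficient forces $b_{p_{r-1}}=0$ via $[p_{r-1},q_k]$. The $p_r$-coefficient of $\Delta(f)$ then equals $-(2\ell-r+1)b_{p_{r-1}}=0$. Use $f+p_k$ for the $q_r$-coefficients.
\end{itemize}
The paper's text writes $f+p_k$ and $[p_{r-1},p_k]$ at this point. Since $[p,p]=0$, the letters must be paired as above.

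After that, your $b_h=0$ argument removes the $f$-term. For $\Delta(f)=Ah+Bp_1+Cq_1$, take $v=Ae-\frac{B}{2\ell}p_0-\frac{C}{2\ell}q_0$; this finishes (iii) exactly as in the paper.
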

\begin{proof}
We give the coefficient argument because it will be used repeatedly. For $k<2\ell$, write
\[
\Delta(p_k)=A_{k,-}p_{k-1}+A_{k,0}p_k+A_{k,+}p_{k+1}+A_{k,z}z.
\]
Apply locality to $e+p_k$. In the pointwise derivation at this element, the same coefficient $b_{q_{2\ell-k}}$ produces a $q_{2\ell-k-1}$-term through $[q_{2\ell-k},e]$ and a $z$-term through $[q_{2\ell-k},p_k]$. The left-hand side has no $q$-component because $\Delta(e)=0$. Hence $b_{q_{2\ell-k}}=0$, and consequently $A_{k,z}=0$. The proof for $q_k$ is identical.

Next compare the two expressions for $\Delta(h+p_k)$ and $\Delta(h+q_k)$, with $k<2\ell$ and $k\neq\ell$. The complementary $q_{2\ell-k}$ (respectively $p_{2\ell-k}$) coefficient is again tied to a central term. Since the already normalized images of $p_k,q_k$ have no central component, all corresponding coefficients of $\Delta(h)$ vanish except possibly those of $p_0,q_0$. This proves (ii).

Finally fix $r\ge2$ and choose $k=2\ell-r+1$, so that $1\le k\le2\ell-1$.
In the locality relation for $f+p_k$, the coefficient of $p_{r-1}$ in the pointwise inner element
produces both the $p_r$-component through $[p_{r-1},f]$ and a central component through
$[p_{r-1},p_k]$.  The central component of $\Delta(p_k)$ has already been shown to vanish, so
that coefficient is zero; hence the $p_r$-coefficient of $\Delta(f)$ is zero.  The same argument
with $f+q_k$ eliminates every $q_r$-coefficient for $r\ge2$.  Thus
\[
\Delta(f)=\gamma f+Ah+Bp_1+Cq_1.
\]
At $e+f$, the pointwise $h$-coefficient would create equal and opposite $e$- and $f$-components.
Since $\Delta(e)=0$ and the left-hand side has no $e$-component, that coefficient is zero, and
therefore $\gamma=0$. Therefore, for
\[
v=Ae-\frac{B}{2\ell}p_0-\frac{C}{2\ell}q_0
\]
one has $[v,e]=0$ and, by \eqref{eq:exotic-table}, $[v,f]=\Delta(f)$.
\end{proof}

\begin{lemma}\label{lem:f-zero-exotic}
Let $\Delta$ be a local derivation with $\Delta(f)=0$. Then, for $1\le k\le2\ell$,
\[
\Delta(p_k)\in\operatorname{span}\{p_{k-1},p_k,p_{k+1}\},\qquad
\Delta(q_k)\in\operatorname{span}\{q_{k-1},q_k,q_{k+1}\},
\]
and
\[
\Delta(h)\in\operatorname{span}\{e,f,p_{2\ell},q_{2\ell}\}.
\]
\end{lemma}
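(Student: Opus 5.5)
The plan is to mirror the proof of Lemma~\ref{lem:e-zero-exotic}, with the roles of $e$ and $f$ exchanged. The relevant symmetry is the level reversal $p_k\leftrightarrow p_{2\ell-k}$, $q_k\leftrightarrow q_{2\ell-k}$, combined with $e\leftrightarrow f$ and $h\mapsto -h$. Up to normalization of basis vectors, this reversal is compatible with \eqref{eq:exotic-table}. We do not rely on it as an automorphism; it only tells us which test elements to use. All comparisons use the pointwise form \eqref{eq:localTheta}--\eqref{eq:pointwise-exotic} together with the support information of Lemma~\ref{lem:localTheta-support}.

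First we treat the central components of $\Delta(p_k)$ for $1\le k\le 2\ell$. Write
\[
\Delta(p_k)=A_{k,-}p_{k-1}+A_{k,0}p_k+A_{k,+}p_{k+1}+A_{k,z}z
\]
and apply locality at $f+p_k$. In the attached pointwise derivation, the coefficient $b_{q_{2\ell-k}}$ enters in two places. Through $[q_{2\ell-k},f]=-(k)\,q_{2\ell-k+1}$ it produces a term proportional to $q_{2\ell-k+1}$, with nonzero factor because $k\ge1$. Through the Heisenberg bracket $[q_{2\ell-k},p_k]$ it produces a nonzero multiple of $z$. No other coefficient of $a_x$ yields $q_{2\ell-k+1}$ when bracketed with $f+p_k$. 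Indeed, $p$-components stay in the $p$-string and only $b_{q_{2\ell-k}}$ feeds that $q$-level via $f$. Also $\lambda_x\delta_\Theta$ and $\mu_x\eta_\Theta$ contribute nothing there. Since $\Delta(f)=0$ and $\Delta(p_k)$ has no $q$-component, the $q_{2\ell-k+1}$-coefficient of $\Delta(f+p_k)$ vanishes. Hence $b_{q_{2\ell-k}}=0$. The only other potential source of $z$ would be $\mu_x\eta_\Theta$, which vanishes on $f+p_k$. Therefore $A_{k,z}=0$. The argument for $q_k$, using the test element $f+q_k$, is identical after swapping the strings.

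Next we handle $\Delta(h)$. By Lemma~\ref{lem:localTheta-support}, $\Delta(h)$ lies in the span of $e$, $f$ and the $p_r,q_r$ with $r\ne\ell$. Apply locality at $h+p_k$ and $h+q_k$ for $1\le k\le2\ell$ with $k\ne\ell$. The coefficient $b_{q_{2\ell-k}}$ produces a $z$-term through $[q_{2\ell-k},p_k]$ and a $q_{2\ell-k}$-term through $[q_{2\ell-k},h]=-2(\ell-(2\ell-k))q_{2\ell-k}$. The factor in the second term is nonzero because $k\ne\ell$. The only other sources of $z$ are the $\delta_\Theta$ and $\eta_\Theta$ terms, and both vanish on $h+p_k$. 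By the previous step $\Delta(p_k)$ has no central component, so $b_{q_{2\ell-k}}=0$. It follows that the $q_{2\ell-k}$-coefficient of $\Delta(h)$ vanishes. Symmetrically, the test element $h+q_k$ kills the $p_{2\ell-k}$-coefficient of $\Delta(h)$. As $k$ ranges over $1\le k\le 2\ell$ with $k\neq\ell$, the complementary index $2\ell-k$ ranges over $0\le r\le 2\ell-1$ with $r\ne\ell$. The surviving indices are therefore $r=2\ell$, which gives $\Delta(h)\in\operatorname{span}\{e,f,p_{2\ell},q_{2\ell}\}$.

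The main obstacle is the bookkeeping needed to make sure the chosen coefficient of $a_x$ is the unique source of the tested basis vector. This matters in particular when $k$ is near the ends of the string, and for $q_{2\ell-k}$ at the middle level. If uniqueness failed, I would add a small multiple of $e$ or $h$ to the test element to separate the weights. The restriction $k\ge1$ in the statement ensures that $[q_{2\ell-k},f]\neq0$. This is exactly why $p_0$ and $q_0$ are excluded, just as $p_{2\ell}$ and $q_{2\ell}$ were excluded in Lemma~\ref{lem:e-zero-exotic}.
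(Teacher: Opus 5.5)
Your proposal is correct and follows the paper's proof essentially step for step. Locality at $f+p_k$ and $f+q_k$ ($k\ge1$) kills the complementary Heisenberg coefficient through the $q_{2\ell-k+1}$ (resp.\ $p_{2\ell-k+1}$) component, hence the central term; locality at $h+p_k$, $h+q_k$ then ties that coefficient to a $z$-term, removing the $q_{2\ell-k}$, $p_{2\ell-k}$ components of $\Delta(h)$, and your explicit unique-source bookkeeping is what the paper leaves implicit (neither the nonvanishing of $2(k-\ell)$ nor the fallback perturbation by $e$ or $h$ is actually needed).
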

\begin{proof}
We record the symmetric coefficient argument explicitly. For $k\ge1$, write
\[
\Delta(p_k)=A_{k,-}p_{k-1}+A_{k,0}p_k+A_{k,+}p_{k+1}+A_{k,z}z.
\]
Apply locality to $f+p_k$. In the pointwise inner element, the coefficient of
$q_{2\ell-k}$ contributes simultaneously to the component $q_{2\ell-k+1}$ through
$[q_{2\ell-k},f]$ and to the central component through
$[q_{2\ell-k},p_k]$. Since $\Delta(f)=0$ and the left-hand side has no $q$-component,
that coefficient must vanish; hence $A_{k,z}=0$. The same argument with $f+q_k$
eliminates the central term in $\Delta(q_k)$.

Next compare locality at $h+p_k$ and $h+q_k$, now with $k\ge1$ and $k\ne\ell$.
The complementary Heisenberg coefficient is again tied to a central term, already known to be absent
from $\Delta(p_k)$ and $\Delta(q_k)$. Therefore all $p_r,q_r$-coefficients of $\Delta(h)$ vanish
except possibly those at the top level $r=2\ell$. Together with
Lemma~\ref{lem:localTheta-support} this gives
\[
\Delta(h)\in\operatorname{span}\{e,f,p_{2\ell},q_{2\ell}\}.
\]
This proves the lemma.
\end{proof}

\begin{lemma}\label{lem:ef-zero-exotic}
If $\Delta(e)=\Delta(f)=0$, then
\[
\Delta(h)=0,
\]
and there exist scalars $a_k,b_k\in\C$ such that
\begin{equation}\label{eq:diagonal-pq}
\Delta(p_k)=a_kp_k,\qquad \Delta(q_k)=b_kq_k,
\qquad 0\le k\le2\ell.
\end{equation}
\end{lemma}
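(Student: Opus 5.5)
We combine Lemmas~\ref{lem:e-zero-exotic} and \ref{lem:f-zero-exotic}, both of which apply because $\Delta(e)=\Delta(f)=0$. First we handle $\Delta(h)$. Lemma~\ref{lem:e-zero-exotic}(ii) puts $\Delta(h)$ in $\operatorname{span}\{e,f,p_0,q_0\}$. Lemma~\ref{lem:f-zero-exotic} puts it in $\operatorname{span}\{e,f,p_{2\ell},q_{2\ell}\}$. Since $\ell\ge1$, we have $0\ne2\ell$, so the intersection gives $\Delta(h)=\alpha e+\beta f$. Lemma~\ref{lem:localTheta-support} excludes an $h$-term but not $e,f$, so these need a separate argument. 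We apply locality at $e+h$ and at $f+h$. Write $a_x$ as in \eqref{eq:pointwise-exotic}. The reductive part of $[a_x,e+h]$ is $[b_ee+b_ff+b_hh,\,e+h]$, which must equal $\alpha e+\beta f$. Comparing the $h$- and $f$-coefficients gives $b_f=0$, and then the $f$-coefficient $\beta$ is $0$. The symmetric test $f+h$ gives $\alpha=0$. This is the standard $\mathfrak{sl}_2$ computation, valid because $\mathfrak{sl}_2$ is semisimple and the radical components cannot produce reductive terms. Hence $\Delta(h)=0$.

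Next we treat the strings. For $1\le k\le2\ell-1$, both lemmas give $\Delta(p_k)\in\operatorname{span}\{p_{k-1},p_k,p_{k+1}\}$, and similarly for $q_k$. For $k=0$, Lemma~\ref{lem:e-zero-exotic}(i) gives the span $\{p_0,p_1\}$. For $k=2\ell$, Lemma~\ref{lem:f-zero-exotic} gives $\{p_{2\ell-1},p_{2\ell}\}$. To kill the off-diagonal terms we use $\Delta(h)=0$ and apply locality at $h+p_k$, writing $\Delta(p_k)=A_{k,-}p_{k-1}+A_{k,0}p_k+A_{k,+}p_{k+1}$. The reductive part of $[a_x,h+p_k]$ is $[b_ee+b_ff,h]=-2b_ee+2b_ff$, and the left side has no reductive component, so $b_e=b_f=0$.

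With $b_e=b_f=0$, the remaining contributions to the $p$-string are $b_h[h,p_k]$, $b_s[s,p_k]$, $\lambda p_k$ and $[b_{p_r}p_r,h]=-2(\ell-r)b_{p_r}p_r$. These must match $A_{k,-}p_{k-1}+A_{k,0}p_k+A_{k,+}p_{k+1}$, so the $p_{k\pm1}$-coordinates force $b_{p_{k\pm1}}=A_{k,\pm}/(-2(\ell-k\mp1))$ whenever $\ell\ne k\pm1$. No contradiction arises this way, because the $p_{k\pm1}$-terms can be absorbed. We therefore need a test element in which the radical coordinate is not free, such as $e+p_k$ (for $k<2\ell$). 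There the pointwise $b_{p_r}$-terms produce $[p_r,e]=-rp_{r-1}$, which is still flexible. The better choice is a test element pairing with the centre, $x=h+p_k+q_{2\ell-k}$. There any $b_{p_{k\pm1}}$-coefficient needed for the absorption creates, through $[p_{k\pm1},q_{2\ell-k}]$, a central term only when $k\pm1+2\ell-k=2\ell$, which fails. So we instead use $x=h+p_k+q_{2\ell-k\mp1}$, chosen so that the needed $b_{p_{k\pm1}}$ pairs with $q_{2\ell-k\mp1}$ to give a nonzero $z$-term. The left side $\Delta(x)$ has no $z$-component, since the central parts have already been removed from $\Delta(p_k)$ and $\Delta(q_k)$. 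Its $q$-part is controlled by $\Delta(q_{2\ell-k\mp1})$. Comparing the $z$-coefficient then expresses $A_{k,\pm}$ linearly through coefficients of $\Delta(q_\cdot)$, and the symmetric test with $p$ and $q$ interchanged gives the reverse relation. The two relations together should force $A_{k,\pm}=0$. This coefficient bookkeeping is the main obstacle. In particular, the middle level $k=\ell$ (where $h$ acts by $0$) and the cases $\ell=k\pm1$ must be checked separately, using $e+p_k$ or $f+p_k$ as substitutes there.

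Once all off-diagonal coefficients vanish, we obtain \eqref{eq:diagonal-pq} with $a_k=A_{k,0}$, and $b_k$ defined analogously from $\Delta(q_k)$.
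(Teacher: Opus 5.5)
Your proof that $\Delta(h)=0$ is fine. Intersecting the spans from Lemmas~\ref{lem:e-zero-exotic}(ii) and~\ref{lem:f-zero-exotic}, then testing at $e+h$ and $f+h$, is the same computation as the paper's test at $h+xe+yf$.

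The gap is in the string part, exactly where you flag ``the main obstacle''. Once $h$ occurs in the test element, $b_e=b_f=0$. Each neighbouring coefficient $A_{k,\pm}$ is then absorbed by $b_{p_{k\pm1}}[p_{k\pm1},h]$ (for $\ell\ne k\pm1$), so the only surviving constraint is the single $z$-equation. For $x=h+p_k+q_{2\ell-k-1}$ that equation is $J_{k+1}b_{p_{k+1}}=J_k\,b_{q_{2\ell-k}}$, with $J_r=(-1)^r(2\ell-r)!\,r!$. This is one linear relation between $A_{k,+}$ and the $q_{2\ell-k}$-coefficient of $\Delta(q_{2\ell-k-1})$.

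The ``reverse'' test you invoke does not give a second relation for this pair. The $p\leftrightarrow q$ test involving the same two unknowns is the very same element $h+q_{2\ell-k-1}+p_k$. The element $h+q_k+p_{2\ell-k-1}$ couples a different pair. So you get one equation in two unknowns per pair, and nothing forces $A_{k,+}=0$. The middle level and the cases $\ell=k\pm1$ are also left open. The test $e+p_k$ cannot fill them, since, as you noticed yourself, it is flexible.

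The missing idea is to leave $h$ out and test at purely radical vectors $x=p_k+q_j$. There the radical part of $a_x$ contributes only multiples of $z$, and $b_h$, $b_s$, $\lambda_x$ preserve levels. So the level shifts on both strings come from the same two coefficients:
\begin{itemize}
\item the $p_{k-1}$- and $q_{j-1}$-coefficients of $\Delta(p_k)+\Delta(q_j)$ are $kb_e$ and $jb_e$;
\item the $p_{k+1}$- and $q_{j+1}$-coefficients are $(2\ell-k)b_f$ and $(2\ell-j)b_f$.
\end{itemize}
Hence the level-shifting part of $\Delta$ on the two strings equals that of $\gamma_-\ad e+\gamma_+\ad f$ for two global constants $\gamma_\pm$. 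It then suffices to kill one coefficient of each kind:
\begin{itemize}
\item at $f+p_1$ the $h$-component of $[a_x,x]$ is $b_e$, which forces the $p_0$-coefficient of $\Delta(p_1)$ to vanish;
\item at $e+p_{2\ell-1}$ the $h$-component is $-b_f$, which kills the $p_{2\ell}$-coefficient of $\Delta(p_{2\ell-1})$.
\end{itemize}
This is essentially the paper's route. It uses the anchors $f+p_1$, $e+p_{2\ell-1}$ and their $q$-analogues, and then the mixed vectors $p_1+q_{2\ell-1}$, $q_1+p_k$, $p_1+q_k$, and so on.
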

\begin{proof}
Lemmas~\ref{lem:e-zero-exotic} and \ref{lem:f-zero-exotic} first imply
$\Delta(h)\in\operatorname{span}\{e,f\}$. Write
$\Delta(h)=Ae+Bf$. For arbitrary $x,y\in\C$, locality at $h+xe+yf$ gives, after comparing the $e,f,h$ coefficients,
\[
A=2xb_h-2b_e,\qquad B=2b_f-2yb_h,\qquad yb_e-xb_f=0.
\]
Taking $(x,y)=(0,1)$ and then $(1,1)$ yields $A=B=0$.

It remains to remove the neighboring levels in the images of the $p$- and $q$-strings.
From the previous two lemmas each image has at most three adjacent levels.  For example, write
\[
\Delta(p_1)=u_0p_0+u_1p_1+u_2p_2.
\]
At the point $f+p_1$, the $h$-coefficient of the pointwise expression is the same coefficient
$b_e$ which produces the $p_0$-term through $[e,p_1]=p_0$.  Since the left-hand side has no
$h$-component, $b_e=0$, hence $u_0=0$.  Similarly, locality at $e+p_{2\ell-1}$ gives that the
$p_{2\ell}$-coefficient of $\Delta(p_{2\ell-1})$ is zero.  The same argument applies to the
$q$-string.

Now use the mixed anchor vectors
\[
p_1+q_{2\ell-1},\qquad q_1+p_{2\ell-1}.
\]
When $\ell\ge2$, these are distinct anchor vectors.  In the first one, the coefficient $b_e$ is
forced to vanish by the $p_0$-component and the coefficient $b_f$ is forced to vanish by the
$q_{2\ell}$-component.  Consequently the remaining $p_2$- and $q_{2\ell-2}$-coefficients vanish.
The second mixed vector gives the analogous conclusion for $q_2$ and $p_{2\ell-2}$.  Hence the
four anchor images are diagonal.

There is a minor endpoint coincidence when $\ell=1$, because $2\ell-1=1$.  In that case the
preceding tests $f+p_1$, $e+p_1$, $f+q_1$, and $e+q_1$ already show that both
$\Delta(p_1)$ and $\Delta(q_1)$ are diagonal.  The only remaining off-diagonal terms can occur in
$\Delta(p_0),\Delta(p_2),\Delta(q_0),\Delta(q_2)$.  Locality at
\[
q_1+p_0,\qquad q_1+p_2,\qquad p_1+q_0,\qquad p_1+q_2
\]
then removes these four terms: the diagonal anchor image forces, respectively, the relevant $f$- or
$e$-coefficient of the pointwise inner element to be zero.  Thus \eqref{eq:diagonal-pq} holds for
$\ell=1$.

Assume henceforth $\ell\ge2$.  For arbitrary $k$, compare
\[
q_1+p_k,\quad q_{2\ell-1}+p_k,
\qquad
p_1+q_k,\quad p_{2\ell-1}+q_k.
\]
Because the anchor images are already diagonal, the first and third test vectors force the
$f$-coefficient of the attached pointwise inner element to vanish whenever an upper neighboring
term is possible, while the second and fourth force the $e$-coefficient to vanish whenever a lower
neighboring term is possible.  Therefore no $p_{k+1},q_{k+1}$ or $p_{k-1},q_{k-1}$ term can occur.
This proves \eqref{eq:diagonal-pq} for every $k$.
\end{proof}

\begin{lemma}\label{lem:s-image-exotic}
If $\Delta(e)=\Delta(f)=\Delta(h)=0$, then
\[
\Delta(s)=\alpha z
\]
for some $\alpha\in\C$.
\end{lemma}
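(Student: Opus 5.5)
By Lemma~\ref{lem:localTheta-support} we may write
\[
\Delta(s)=w+\alpha z,\qquad w=\sum_{r=0}^{2\ell}\bigl(c_rp_r+d_rq_r\bigr),
\]
and the task is to show $w=0$. I will follow the device used for $d=2$ in Lemma~\ref{lem:mass-rotation-images}(ii): test locality on the elements $th+s$, $t\in\C$. Since $\Delta(h)=0$, linearity gives
\[
\Delta(th+s)=w+\alpha z.
\]
On the other hand, by \eqref{eq:localTheta} there are a pointwise element $a$ and scalars $\lambda,\mu$ such that
\[
\Delta(th+s)=[a,th+s]+\lambda\delta_\Theta(th+s)+\mu\eta_\Theta(th+s).
\]
Here $\delta_\Theta(th+s)=0$ and $\eta_\Theta(th+s)=\mu z$ is central. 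Projecting onto the radical $V=V^+\oplus V^-$ modulo $\C z$, the $V$-component of the right-hand side equals $-\rho(th+s)v$, where $v$ is the $V$-part of $a$. The reductive part of $a$ contributes only reductive components, and $[th+s,\cdot]$ preserves the decomposition into reductive part, $V$, and $\C z$. Hence
\[
w\in\operatorname{Im}\rho(th+s)|_V\qquad\text{for every }t\in\C.
\]

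The key computation is the joint diagonalization. By \eqref{eq:exotic-table}, $p_k$ and $q_k$ are joint eigenvectors of $\rho(h)$ and $\rho(s)$ with eigenvalues $(2(\ell-k),+1)$ and $(2(\ell-k),-1)$. Consequently $\rho(th+s)$ is diagonal in the basis $\{p_k,q_k\}$, with eigenvalue $2t(\ell-k)+1$ on $p_k$ and $2t(\ell-k)-1$ on $q_k$. Its image is therefore spanned by the basis vectors with nonzero eigenvalue. Given $k$, I need a $t$ that kills each eigenvector:
\begin{itemize}[label=--]
\item for $p_k$ with $k\neq\ell$, take $t=-1/(2(\ell-k))$; this forces $c_k=0$;
\item for $q_k$ with $k\neq\ell$, take $t=1/(2(\ell-k))$; this forces $d_k=0$.
\end{itemize}

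The main obstacle is the middle level $k=\ell$, which is where this case differs from the mass case. Because $\ell$ is an integer, $h$ has weight $0$ on $p_\ell$ and $q_\ell$, so $th+s$ acts on them by $\pm1$ for every $t$. The test elements $th+s$ therefore cannot remove $c_\ell$ and $d_\ell$, and a different test element is needed.

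For the middle level I will use $s+e$ instead. Since $\Delta(e)=0$, we have $\Delta(s+e)=w+\alpha z$. Because $e$ and $s$ commute, $\rho(s+e)$ equals $\pm1+\rho(e)$ on $V^\pm$. This operator is unipotent up to sign, hence invertible, so a single test element of this type gives no information. I will therefore try the combinations
\[
x=s+\tau h+\sigma e,
\]
choosing $\tau,\sigma$ so that $\rho(x)$ restricted to $V^+$ is singular with $p_\ell$ outside its image. Concretely, I take $\tau=-1/(2(\ell-k_0))$ with $k_0\neq\ell$, so that the eigenvalue on $p_{k_0}$ vanishes, and add a nilpotent $e$-part that mixes the levels. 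I expect the cleanest choice to be $x=s+\tau h+e$ with $k_0=\ell+1$. The $h$- and $s$-parts give $\rho(x)p_j=\bigl(2\tau(\ell-j)+1\bigr)p_j$, so the diagonal part vanishes at $j=\ell+1$ and the $e$-part adds $j\,p_{j-1}$. Thus $\rho(x)$ restricted to $V^+$ is triangular with exactly one zero on the diagonal. A left null functional $\varphi$ of $\rho(x)|_{V^+}$ is then computed from the triangular recursion, and I expect it to have $\varphi(p_\ell)\neq0$ while vanishing on the levels above $\ell$.

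Since the coefficients $c_k$ with $k\neq\ell$ are already zero, the condition $\varphi(w)=0$ reduces to $c_\ell\,\varphi(p_\ell)=0$, which gives $c_\ell=0$. The symmetric choice on $V^-$ gives $d_\ell=0$. If $p_\ell$ nevertheless lies in the image for this particular choice, the fallback is to use $f$ in place of $e$ and to pair levels $\ell\pm1$. Once all $c_k$ and $d_k$ vanish, $w=0$ and $\Delta(s)=\alpha z$.
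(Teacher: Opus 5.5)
\textbf{Verdict: there is a gap at the middle level.} Your reduction via the test elements $th+s$ is correct and coincides with the first step of the paper's proof. The problem is that the test element you commit to for $c_\ell$ does not behave as you predict.

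With $k_0=\ell+1$ you need $\tau=\tfrac12$, and then $\rho(s+\tfrac12h+e)p_j=(\ell+1-j)p_j+jp_{j-1}$. Since $\ell+1-j\neq0$ for $0\le j\le\ell$, induction on $j$ gives $p_0,p_1,\dots,p_\ell\in\operatorname{Im}\rho(x)|_{V^+}$. Equivalently, the left null functional $\varphi$, determined by $(\ell+1-j)\varphi(p_j)+j\varphi(p_{j-1})=0$, vanishes on $p_0,\dots,p_\ell$ and is supported on levels $\ell+1,\dots,2\ell$. This is exactly the opposite of what you expect, so this $x$ gives no information about $c_\ell$. The mirror choice on $V^-$ ($\tau=-\tfrac12$, zero at $q_{\ell+1}$) fails in the same way. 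The underlying reason is that $e$ lowers levels. A bidiagonal operator with a single zero diagonal entry at level $k_0$ therefore has every level below $k_0$ in its image. To keep $p_\ell$ out of the image, the zero must sit \emph{below} $\ell$.

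The repair is easy. For $V^+$, take $x=s-\tfrac12h+e$. Then $\rho(x)p_j=(j-\ell+1)p_j+jp_{j-1}$, and the only vanishing diagonal entry is at $j=\ell-1$. The left null functional is $\varphi(p_j)=0$ for $j<\ell-1$, $\varphi(p_{\ell-1})=1$, and $\varphi(p_j)=-j\varphi(p_{j-1})/(j-\ell+1)$ for $j\ge\ell$. In particular $\varphi(p_\ell)=-\ell\neq0$. Since $c_j=0$ for $j\neq\ell$, we get $0=\varphi\bigl(\sum_j c_jp_j\bigr)=-\ell c_\ell$. For $V^-$, use $x=s+\tfrac12h+e$, whose diagonal on $V^-$ is $\ell-1-j$. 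The corresponding functional has $\psi(q_{\ell-1})=1$ and $\psi(q_\ell)=\ell$, which gives $d_\ell=0$. So your two candidate elements do work, but with the roles of the strings interchanged. Your fallback ($f$ in place of $e$, zero at level $\ell+1$) also works, by the same argument with levels reversed.

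Once corrected, your route differs from the paper's only in the middle-level test element. The paper uses the single semisimple element $s+e+\frac{1}{4\ell^2}f$ and solves a three-term recurrence from both ends. Your triangular elements need one test per string, but the null functional is completely explicit.
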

\begin{proof}
Write
\[
\Delta(s)=\sum_{i=0}^{2\ell}A_ip_i+
\sum_{i=0}^{2\ell}B_iq_i+Cz.
\]
Apply locality to $xh+s$ with arbitrary $x\in\C$. Comparing the $p_i$- and $q_i$-coefficients shows immediately that
$A_i=B_i=0$ for $i\neq\ell$. Thus
\[
\Delta(s)=A_\ell p_\ell+B_\ell q_\ell+Cz.
\]
To remove the two middle-weight coefficients, use the single test element
\[
\xi=s+e+\frac{1}{4\ell^2}f.
\]
The $p$-part of $[a_\xi,\xi]$ has coefficients satisfying, away from $i=\ell$, the recurrence
\begin{equation}\label{eq:rec-p}
b_{p_i}+(i+1)b_{p_{i+1}}+
\frac{2\ell-i+1}{4\ell^2}b_{p_{i-1}}=0,
\end{equation}
with $b_{p_{-1}}=b_{p_{2\ell+1}}=0$; at $i=\ell$ the same left-hand side equals $-A_\ell$. Iterating \eqref{eq:rec-p} from both endpoints gives
\[
b_{p_{\ell-1}}=-\frac{2\ell^2}{\ell+1}b_{p_\ell},
\qquad
b_{p_{\ell+1}}=-\frac{1}{2(\ell+1)}b_{p_\ell}.
\]
Substitution into the middle equation gives $A_\ell=0$. For the $q$-string, if $d_i$ denotes the coefficient of $q_i$ in the pointwise inner element, the corresponding recurrence is
\[
d_i-(i+1)d_{i+1}-\frac{2\ell-i+1}{4\ell^2}d_{i-1}=0
\qquad(i\ne\ell),
\]
with the middle left-hand side equal to $B_\ell$.  After the substitution
$\widetilde d_i=(-1)^i d_i$, this becomes exactly the same homogeneous recurrence as
\eqref{eq:rec-p}; the same endpoint computation therefore gives $B_\ell=0$. Hence only the
central term remains.
\end{proof}

\begin{lemma}\label{lem:scalar-exotic}
Assume
\[
\Delta(e)=\Delta(f)=\Delta(h)=\Delta(s)=\Delta(z)=0.
\]
Then there exists $a\in\C$ such that
\begin{equation}\label{eq:scalar-exotic}
\Delta(p_k)=ap_k,\qquad
\Delta(q_k)=-aq_k
\qquad(0\le k\le2\ell).
\end{equation}
\end{lemma}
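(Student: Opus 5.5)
By Lemma~\ref{lem:ef-zero-exotic}, the hypotheses already give the diagonal form $\Delta(p_k)=a_kp_k$ and $\Delta(q_k)=b_kq_k$. What remains is to show that the $a_k$ are independent of $k$, that the $b_k$ are independent of $k$, and that $a+b=0$. The approach is to apply locality at carefully chosen sums and compare coefficients in the pointwise form \eqref{eq:localTheta}--\eqref{eq:pointwise-exotic}. Because the images of $p_k,q_k$ are diagonal, the non-scalar parts of each pointwise derivation will be forced to vanish in turn.

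\emph{Step 1: constancy along each string.} Apply locality at $x=p_k+p_{k+1}$. On the right-hand side, only $b_hh+b_ss$ and $\lambda_x\delta_\Theta$ produce same-level terms. The coefficients $b_e$ and $b_f$ would produce the off-level components $p_{k-1}$ and $p_{k+2}$, so they must vanish. One exception is the $p_k$-component coming from $b_e[e,p_{k+1}]$, and symmetrically the $p_{k+1}$-component coming from $b_f[f,p_k]$. I will remove these by using $x=p_k+p_{k+1}+q_j$ instead, with $q_j$ chosen so that the same $b_e$ or $b_f$ also creates a $q_{j\mp1}$-term that must be zero; if that is awkward, I will instead use $e$- or $f$-anchored test vectors as in Lemma~\ref{lem:ef-zero-exotic}. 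Once $b_e=b_f=0$, comparison gives
\[
a_k=2(\ell-k)b_h+b_s+\lambda,\qquad a_{k+1}=2(\ell-k-1)b_h+b_s+\lambda .
\]
This alone does not force $a_k=a_{k+1}$, so the next step must remove the $h$-freedom.

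\emph{Step 2: killing the $h$-freedom with $e$ and $f$.} Use the test vector $x=e+p_k+p_{k+1}$, or $x=e+f+p_k+p_{k+1}$. Since $\Delta(e)=0$, the $e$-component of the right-hand side, namely $b_h[h,e]=2b_he$, must vanish, so $b_h=0$. This holds provided no other term of $[a_x,x]$ lands on $e$; the $b_f[f,e]$ contribution gives $-b_fh$, which must also vanish. With $b_h=0$, and after again ruling out the $e$- and $f$-shifts, we obtain $a_k=a_{k+1}=b_s+\lambda$. The same argument on the $q$-string gives $b_k=-b_s+\lambda$ (with the $q$-string's own $b_s,\lambda$), so $b_k=b$ is constant as well.

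\emph{Step 3: the relation $a+b=0$.} Use the central pairing together with $\Delta(z)=0$ and $\Delta(s)=0$. Apply locality at $x=s+p_0+q_0$, or at $x=p_m+q_{2\ell-m}+z$ to exploit $\delta_\Theta(z)=2z$. The same-level coefficients then read $a=b_s+\lambda$ and $b=-b_s+\lambda$, while the $z$-coefficient, combined with $\Delta(z)=0$, forces $2\lambda=0$ once the Heisenberg contributions $b_{q_{2\ell}}J_0z$ and the like are shown to vanish. Those contributions will be excluded by the $q$- and $p$-shifts they would generate under $e$ or $f$ if $e$ is added to the test vector. I expect the cleanest choice to be $x=e+f+s+p_0+q_0+z$ or a similar combination, and this is the main obstacle: finding a single test element in which $b_e,b_f,b_h$ and the Heisenberg coefficients are all forced to zero, so that the $z$-coefficient isolates $\lambda$. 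With $\lambda=0$ we get $a=b_s$ and $b=-b_s$, hence $b=-a$, which is \eqref{eq:scalar-exotic}.
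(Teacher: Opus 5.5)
Your proposal has a genuine gap. The test elements you name in Steps~1 and~2 impose no condition at all on the diagonal coefficients, and Step~3 aims at the wrong target.

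\textbf{Steps 1 and 2.} In Step~2 you keep track of $b_hh$, $b_ff$ and $b_ss$, but not of the radical part $\sum_r b_{p_r}p_r$ of the pointwise inner element. At $x=e+p_k+p_{k+1}$ this part contributes $[b_{p_r}p_r,e]=-r\,b_{p_r}p_{r-1}$. So the $p_k$- and $p_{k+1}$-equations contain the terms $-(k+1)b_{p_{k+1}}$ and $-(k+2)b_{p_{k+2}}$. These occur in no other equation, since $x$ contains no $q$-vector that could pair with them centrally. Hence $a_k=a_{k+1}=b_s+\lambda$ does not follow. In fact every diagonal choice of $(a_k)$ is compatible with locality at each of $p_k+p_{k+1}$, $p_k+p_{k+1}+q_j$, $e+p_k+p_{k+1}$ and $e+f+p_k+p_{k+1}$. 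For the last one, $\rho(e+f)$ has a one-dimensional cokernel, detected on only one of $p_k,p_{k+1}$, and the scalar $b_s+\lambda$ absorbs it.

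\textbf{Step 3.} It has the same defect.
\begin{enumerate}[label=\textup{(\alph*)}]
\item At $p_j+q_{2\ell-j}+z$, the Heisenberg coefficients $b_{p_j},b_{q_{2\ell-j}}$ appear only in the $z$-equation, so they simply absorb $\lambda$.
\item At $s+p_0+q_0$, the terms $-b_{p_0}p_0+b_{q_0}q_0$ coming from $[a_x,s]$, together with the common weight term $2\ell b_h$, leave $a_0$ and $b_0$ free.
\item Once $e$ is added, a coefficient such as $b_{p_k}$ enters both a same-level equation (through $[p_k,e]=-kp_{k-1}$) and the $z$-equation (through $[p_k,q_{\bar k}]$). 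It is tied to the data and cannot be forced to vanish.
\end{enumerate}
So the plan ``kill the Heisenberg coefficients, then read off $2\lambda=0$'' does not go through.

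\textbf{The missing idea.} The paper eliminates these coefficients instead of killing them. Put $\bar k=2\ell-k$ and $c_k=(-1)^k(2\ell-k)!\,k!$, and apply locality at
\begin{itemize}
\item $h+p_k+q_{\bar k}-\frac{c_k}{2(\ell-k)}z$ with $k\ne\ell$;
\item $e+p_{k-1}+q_{\bar k}-\frac{c_k}{k}z$;
\item $f+p_{k+1}+q_{\bar k}-\frac{c_k}{2\ell-k}z$.
\end{itemize}
Now add the $p$- and $q$-equations. The $b_s$-terms cancel because $s$ acts by $\pm1$. The $b_h$-terms cancel in the $h$-case because $p_k,q_{\bar k}$ have opposite $h$-weights, and the $e$- or $f$-anchor forces $b_h=0$ in the other two cases. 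The central correction is tuned so that the $z$-equation says exactly that the combination of surviving Heisenberg coefficients in the summed equation equals $2\lambda$. This cancels the $2\lambda$ from $\delta_\Theta$.

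The result is $a_k+b_{\bar k}=0$, $a_{k-1}+b_{\bar k}=0$ and $a_{k+1}+b_{\bar k}=0$. These relations chain from both ends toward $k=\ell$ and give $a_k\equiv a$ and $b_k\equiv-a$.

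At these points $\delta_\Theta(x)$ already lies in $[\widehat{\g}^{\,\Theta}_\ell(2),x]$, so $\lambda$ is not even determined there. Proving $\lambda=0$ is therefore the wrong goal.

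\textbf{Salvage.} Your two-stage plan could be repaired, but as written neither stage is established.
\begin{itemize}
\item Per-string constancy can be obtained from Lemma~\ref{lem:sl2-reflexive} together with a generic nilpotent test $e-th-t^2f$, as in Lemma~\ref{lem:noncentral-compatibility}.
\item Then a single tuned element, for instance the $h$-anchored one above, gives $a+b=0$.
\end{itemize}
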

\begin{proof}
By Lemma~\ref{lem:ef-zero-exotic}, write
$\Delta(p_k)=a_kp_k$ and $\Delta(q_k)=b_kq_k$.  Put
\[
\bar k=2\ell-k,
\qquad
c_k=(-1)^k(2\ell-k)!k!.
\]
We now keep the relevant pointwise coefficients visible; this also fixes all signs and factorials.

First take $k\ne\ell$ and
\[
x=h+p_k+q_{\bar k}-\frac{c_k}{2(\ell-k)}z.
\]
Let $u_h,u_s,u_{p_k},u_{q_{\bar k}}$ and $\lambda$ denote the corresponding coefficients in the
pointwise derivation.  Comparing the $p_k,q_{\bar k},z$ coefficients gives
\begin{align}
a_k&=u_s+2(\ell-k)u_h-2(\ell-k)u_{p_k}+\lambda,\label{eq:scalar-audit1a}\\
b_{\bar k}&=-u_s-2(\ell-k)u_h+2(\ell-k)u_{q_{\bar k}}+\lambda,\label{eq:scalar-audit1b}\\
0&=c_k\bigl(u_{p_k}-u_{q_{\bar k}}\bigr)
-\frac{c_k}{\ell-k}\lambda.\label{eq:scalar-audit1c}
\end{align}
Equation~\eqref{eq:scalar-audit1c} says
$2(\ell-k)(u_{p_k}-u_{q_{\bar k}})=2\lambda$; adding
\eqref{eq:scalar-audit1a} and \eqref{eq:scalar-audit1b} therefore yields
\begin{equation}\label{eq:rel1}
a_k+b_{\bar k}=0.
\end{equation}

Next let $1\le k\le2\ell$ and use
\[
x=e+p_{k-1}+q_{\bar k}-\frac{c_k}{k}z.
\]
The $e$-coefficient first gives $u_h=0$.  The coefficients of $p_{k-1}$ and $q_{\bar k}$ are
\begin{align*}
a_{k-1}&=u_s-ku_{p_k}+\lambda,\\
b_{\bar k}&=-u_s-(\bar k+1)u_{q_{\bar k+1}}+\lambda.
\end{align*}
The central coefficient is
\[
0=c_ku_{p_k}
+\frac{\bar k+1}{k}c_k u_{q_{\bar k+1}}
-\frac{2c_k}{k}\lambda,
\]
that is,
\[
ku_{p_k}+(\bar k+1)u_{q_{\bar k+1}}=2\lambda.
\]
Adding the preceding two displayed formulas gives
\begin{equation}\label{eq:rel2}
a_{k-1}+b_{\bar k}=0.
\end{equation}

Finally, for $0\le k\le2\ell-1$ take
\[
x=f+p_{k+1}+q_{\bar k}-\frac{c_k}{2\ell-k}z.
\]
The $f$-coefficient forces $u_h=0$.  We obtain
\begin{align*}
a_{k+1}&=u_s-(2\ell-k)u_{p_k}+\lambda,\\
b_{\bar k}&=-u_s-(k+1)u_{q_{\bar k-1}}+\lambda,
\end{align*}
and the central coefficient gives
\[
(2\ell-k)u_{p_k}+(k+1)u_{q_{\bar k-1}}=2\lambda.
\]
Hence
\begin{equation}\label{eq:rel3}
a_{k+1}+b_{\bar k}=0.
\end{equation}

For $0\le k<\ell$, equations~\eqref{eq:rel1} and \eqref{eq:rel3} give
$a_k=a_{k+1}$, while for $\ell<k\le2\ell$ equations~\eqref{eq:rel1} and
\eqref{eq:rel2} give $a_k=a_{k-1}$.  The endpoint choices $k=\ell-1$ in \eqref{eq:rel1},\eqref{eq:rel3} and
$k=\ell+1$ in \eqref{eq:rel1},\eqref{eq:rel2} are included in these two chains, so both
chains meet at $a_\ell$.  Thus
\[
a_0=\cdots=a_{2\ell}=a.
\]
Relations~\eqref{eq:rel1}--\eqref{eq:rel3} then imply
$b_0=\cdots=b_{2\ell}=-a$, proving \eqref{eq:scalar-exotic}.
\end{proof}

\begin{remark}\label{rem:exotic-coeff-audit}
The three central corrections in Lemma~\ref{lem:scalar-exotic} are not ad hoc.  They are exactly
\[
-\frac{c_k}{2(\ell-k)},\qquad
-\frac{c_k}{k},\qquad
-\frac{c_k}{2\ell-k},
\]
so that the central equation cancels the scalar outer contribution $2\lambda z$.  This is the
mechanism that turns the pointwise diagonal coefficients into one global inner derivation $a\ad s$.
\end{remark}

\begin{theorem}\label{thm:local-exotic}
Every local derivation on the exotic central extension
$\widehat{\g}^{\,\Theta}_\ell(2)$ is a derivation. Equivalently,
\begin{equation}\label{eq:locder-exotic}
\LocDer\widehat{\g}^{\,\Theta}_\ell(2)
=
\Der\widehat{\g}^{\,\Theta}_\ell(2).
\end{equation}
\end{theorem}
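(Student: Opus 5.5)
The plan is to subtract global derivations step by step until the local derivation is normalized to satisfy the hypotheses of Lemmas~\ref{lem:ef-zero-exotic}--\ref{lem:scalar-exotic}, and then to read off that what remains is $a\ad s+b\eta_\Theta$. Since the Lie algebra $\mathfrak s=\operatorname{span}\{e,f,h\}$ is semisimple, the first goal is to kill $\Delta$ on $\mathfrak s$.

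\textbf{Normalizing on $\mathfrak s$.} The sum $V\oplus\C z$ (with $V=V^+\oplus V^-$) is the nilradical, so it is characteristic. Hence $\Delta$ induces a local derivation on the reductive quotient $\mathfrak s\oplus\C s$. By Lemma~\ref{lem:local-on-S}, together with the observation (as in Lemma~\ref{lem:mass-normalization}) that every derivation of $\widehat{\g}^{\,\Theta}_\ell(2)$ induces zero on the class of $s$, this quotient map is inner. We lift it and subtract.

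Next we make $\Delta(e)$ vanish exactly, not only modulo the nilradical. Choose a pointwise derivation $D_{e+z}$ and write its inner element as $a$. Since $\pi\Delta(e+z)=0$, the $\mathfrak s$-part of $a$ commutes with $e$. Subtracting $D_{e+z}$, adjusted by an inner derivation of $\mathfrak s$ that centralizes $e$ so as to keep the quotient normalization, gives $\Delta(e+z)=0$. Lemma~\ref{lem:ez-zero} then yields $\Delta(e)=\Delta(z)=0$.

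Now apply Lemma~\ref{lem:e-zero-exotic}(iii): there is $v$ with $[v,e]=0$ and $\Delta(f)=[v,f]$. Subtracting $\ad v$ keeps $\Delta(e)=0$ and gives $\Delta(f)=0$. It also keeps $\Delta(z)=0$, because $z$ is central. Lemma~\ref{lem:ef-zero-exotic} then gives $\Delta(h)=0$ and diagonal images $\Delta(p_k)=a_kp_k$, $\Delta(q_k)=b_kq_k$.

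\textbf{The rotation generator and the radical.} Lemma~\ref{lem:s-image-exotic} gives $\Delta(s)=\alpha z$, and subtracting $\alpha\eta_\Theta$ gives $\Delta(s)=0$. Note that $\eta_\Theta$ vanishes on every other basis vector, so this preserves all earlier normalizations. Lemma~\ref{lem:scalar-exotic} now applies and gives $\Delta(p_k)=ap_k$, $\Delta(q_k)=-aq_k$. Thus $\Delta=a\,\ad s$ on the whole algebra, since $\ad s$ kills $e,f,h,s,z$. Adding back all the subtracted derivations shows that the original $\Delta$ is a derivation, which proves \eqref{eq:locder-exotic}. The reverse inclusion is trivial.

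\textbf{Main obstacle.} The delicate step is the first one: ensuring that the successive subtractions stay compatible. Concretely, when we pass from $\pi\Delta=0$ on $\mathfrak s$ to $\Delta(e)=0$ exactly, we must check that the subtracted pointwise derivation $D_{e+z}$ does not reintroduce a nonzero quotient component on $f$ or $h$. I would handle this by working only in the centralizer of $e$ and, if needed, replacing $e+z$ by a test vector whose $\mathfrak s$-centralizer is small. The alternative is to first use Whitehead's lemma on the pointwise data at $e+z$, by splitting the inner element into its reductive part, its $V$-part, and the $\lambda\delta_\Theta$ part. The $\lambda\delta_\Theta$ part only changes $z$, and this is exactly what Lemma~\ref{lem:ez-zero} absorbs.
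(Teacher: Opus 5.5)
Your proposal is correct and follows essentially the paper's proof. You make $\Delta(e+z)=0$ by subtracting a pointwise derivation, invoke Lemma~\ref{lem:ez-zero}, subtract $\ad v$ from Lemma~\ref{lem:e-zero-exotic}(iii), and then apply Lemmas~\ref{lem:ef-zero-exotic}, \ref{lem:s-image-exotic} and \ref{lem:scalar-exotic} to reach $a\,\ad s+\alpha\eta_\Theta$. The only difference is your preliminary normalization modulo the nilradical, which is harmless but unnecessary: the paper subtracts $D_{e+z}$ itself, and none of the later lemmas assume $\pi\Delta=0$, so the ``main obstacle'' you describe never arises.
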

\begin{proof}
Let $\Delta$ be a local derivation. Choose
$D_0\in\Der\widehat{\g}^{\,\Theta}_\ell(2)$ such that
$\Delta(e+z)=D_0(e+z)$, and put
$\Delta_1=\Delta-D_0$. Then $\Delta_1$ is local and
$\Delta_1(e+z)=0$. By Lemma~\ref{lem:ez-zero},
\[
\Delta_1(e)=\Delta_1(z)=0.
\]
Lemma~\ref{lem:e-zero-exotic} provides $v$ with
$[v,e]=0$ and $\Delta_1(f)=[v,f]$. Hence
$\Delta_2=\Delta_1-\ad v$ satisfies
\[
\Delta_2(e)=\Delta_2(f)=\Delta_2(z)=0.
\]
Lemma~\ref{lem:ef-zero-exotic} gives $\Delta_2(h)=0$, and Lemma~\ref{lem:s-image-exotic} gives
$\Delta_2(s)=\alpha z=\alpha\eta_\Theta(s)$. Set
$\Delta_3=\Delta_2-\alpha\eta_\Theta$. Then
\[
\Delta_3(e)=\Delta_3(f)=\Delta_3(h)=\Delta_3(s)=\Delta_3(z)=0.
\]
By Lemma~\ref{lem:scalar-exotic},
$\Delta_3(p_k)=ap_k$ and $\Delta_3(q_k)=-aq_k$ for all $k$. Since these are precisely the values of $a\,\ad s$, we have
$\Delta_3=a\,\ad s$. Therefore
\[
\Delta=D_0+\ad v+\alpha\eta_\Theta+a\,\ad s
\]
is a derivation.
\end{proof}

\begin{remark}\label{rem:exotic-selfcontained}
The proof of Theorem~\ref{thm:local-exotic} uses only the multiplication table \eqref{eq:exotic-table} and the derivation decomposition established in Theorem~\ref{thm:exoticder}. In particular, the exotic case is self-contained in the present manuscript.
\end{remark}

\subsection{Comparison of the three cases}\label{subsec:comparison}
We finish the local-derivation section by comparing the three outcomes. The comparison makes clear that the decisive distinction is not merely the presence of a radical, but whether that radical is abelian or is tied together by a nondegenerate central Heisenberg-type pairing.

The local-derivation problem is now complete for all three Lie algebras considered in this paper.  The two central extensions are rigid: Theorems~\ref{thm:local-mass} and \ref{thm:local-exotic} give
\[
\LocDer\widehat{\g}^{\,M}_\ell(d)=\Der\widehat{\g}^{\,M}_\ell(d),
\qquad
\LocDer\widehat{\g}^{\,\Theta}_\ell(2)=\Der\widehat{\g}^{\,\Theta}_\ell(2).
\]
For the algebra without central extension, Theorem~\ref{thm:local-noncentral} shows that the same equality holds in every spatial dimension $d\ge2$ and also in dimension $d=1$ except when $2\ell\ge3$ is odd.  In that exceptional one-dimensional case there is an additional irreducible pure-local component isomorphic to $U_{2\ell-2}$ and of dimension $2\ell-1$.

This comparison isolates the role of the central cocycle.  The abelian radical permits range-local coboundaries which need not be genuine coboundaries, whereas the Heisenberg pairing in the mass and exotic extensions couples complementary weight spaces and removes that freedom.  In particular, the mass extension converts the exceptional odd-$2\ell$ non-central cases into rigid local-derivation algebras.

\section{Dimensions and Lie algebra structure of local derivations}\label{sec:locder-structure}
Beyond the classification of individual local derivations, it is natural to ask what algebraic structure is carried by the whole space $\LocDer$.  A recent preprint of Hao and Chen gives a general pointwise-invariant-set approach to the commutator closure of $\LocDer$~\cite{HC}; the purpose here is more explicit.  We compute the dimensions in all three families and determine the concrete Lie algebra structure, including the module action, radical, derived algebra and center in the exceptional one-dimensional non-central case.

Put
\[
m=2\ell,
\qquad
r_d=\dim\mathfrak{so}_d=\frac{d(d-1)}2.
\]

\begin{proposition}\label{prop:dimension-formulas}
The following dimension formulas hold.
\begin{enumerate}[label=\textup{(\roman*)}]
\item For the algebra without central extension,
\[
\dim\g_\ell(d)=3+r_d+d(m+1),
\qquad
\dim\Der\g_\ell(d)=4+r_d+d(m+1).
\]
If $d\ge2$, or if $d=1$ with $m$ even or $m=1$, then the same formula is the dimension of $\LocDer\g_\ell(d)$.  If $d=1$ and $m\ge3$ is odd, then
\[
\dim\LocDer\g_\ell(1)=2m+4=4\ell+4.
\]
\item For the mass central extension,
\[
\dim\widehat{\g}^{\,M}_\ell(d)=4+r_d+d(m+1).
\]
Moreover,
\[
\dim\LocDer\widehat{\g}^{\,M}_\ell(d)
=\dim\Der\widehat{\g}^{\,M}_\ell(d)
=
\begin{cases}
4+r_d+d(m+1),& d\ne2,\\[1mm]
2m+8,&d=2.
\end{cases}
\]
\item For the exotic central extension,
\[
\dim\widehat{\g}^{\,\Theta}_\ell(2)=4\ell+7,
\qquad
\dim\LocDer\widehat{\g}^{\,\Theta}_\ell(2)
=\dim\Der\widehat{\g}^{\,\Theta}_\ell(2)=4\ell+8.
\]
\end{enumerate}
\end{proposition}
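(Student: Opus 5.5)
The plan is pure bookkeeping: combine the classification theorems with the rank–nullity identity $\dim\Inn L=\dim L-\dim Z(L)$ and the outer-derivation counts of Section~\ref{sec:derivations}. First I count bases. For $\g_\ell(d)$ the basis $\{D,H,C,M_{ij}\ (i<j),P_i^{(n)}\}$ gives $3+r_d+d(m+1)$. Adding the central element gives $4+r_d+d(m+1)$ for $\widehat{\g}^{\,M}_\ell(d)$. For the exotic algebra with $d=2$, $r_2=1$ and $m=2\ell$ give $3+1+2(2\ell+1)+1=4\ell+7$.

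Next I determine the centers. For $L=S\ltimes V_{\ell,d}$, suppose $x+v$ is central with $x\in S$ and $v\in V_{\ell,d}$. Bracketing with $V_{\ell,d}$ gives $\rho(x)=0$. The representation $\rho$ is faithful on $S=\mathfrak{sl}_2\oplus\sod$: on the $\mathfrak{sl}_2$ factor because $m\ge1$, and on $\sod$ because the natural representation is faithful (including the $s$-action for $d=2$). Hence $x=0$. Bracketing with $S$ then shows that $v$ is an $S$-invariant vector, and the non-trivial irreducible $\mathfrak{sl}_2$-module $U_m$ (tensored with $W_d$) has none. So $Z(\g_\ell(d))=0$. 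For both central extensions the same argument applies modulo the added central line; the proofs of Theorems~\ref{thm:massder} and \ref{thm:exoticder} already record $Z=\C M$ and $Z=\C z$. Thus $\dim\Inn$ equals $\dim L$ in the non-central case and $\dim L-1$ for both extensions.

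Now I add the outer parts. Theorems~\ref{thm:der0}, \ref{thm:massder} and \ref{thm:exoticder} give outer derivation spaces of dimension $1$ in the non-central case, $1$ for the mass extension with $d\ne2$, and $2$ for the mass extension with $d=2$ and for the exotic extension. This yields the following.
\begin{itemize}
\item $\dim\Der\g_\ell(d)=4+r_d+d(m+1)$.
\item $\dim\Der\widehat{\g}^{\,M}_\ell(d)=(4+r_d+d(m+1)-1)+1=4+r_d+d(m+1)$ for $d\ne2$.
\item $\dim\Der\widehat{\g}^{\,M}_\ell(2)=(5+2(m+1)-1)+2=2m+8$.
\item $\dim\Der\widehat{\g}^{\,\Theta}_\ell(2)=(4\ell+6)+2=4\ell+8$.
\end{itemize}

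Finally I pass to $\LocDer$. Theorems~\ref{thm:local-mass}, \ref{thm:local-exotic} and parts (i)--(ii) of Theorem~\ref{thm:local-noncentral} give $\LocDer=\Der$ in every rigid case. In the exceptional case $d=1$ with $m\ge3$ odd, the direct sum \eqref{eq:locder-exception} and \eqref{eq:dim-pure} give $\dim\LocDer\g_\ell(1)=(4+(m+1))+(m-1)=2m+4=4\ell+4$.

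The only step that needs care is the faithfulness and invariant-free argument establishing $Z(\g_\ell(d))=0$. This must hold in the edge cases $d=1$ (where $S=\mathfrak{sl}_2$ acts faithfully because $m\ge1$) and $d=2$ (where $s$ acts by $\pm1$ on the two strings, so it is not central in $\g_\ell(2)$). Once the centers are settled, everything else is direct arithmetic.
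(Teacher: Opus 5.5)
Your proposal is correct and follows the paper's own proof. Both count bases, use $\dim\Inn L=\dim L-\dim Z(L)$ (zero center for $\g_\ell(d)$, one-dimensional center for the two extensions), add the outer directions from Theorems~\ref{thm:der0}, \ref{thm:massder} and~\ref{thm:exoticder}, apply the rigidity theorems, and add the $(m-1)$-dimensional pure-local summand in the exceptional case, with your explicit check that $Z(\g_\ell(d))=0$ filling in a step the paper only asserts.
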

\begin{proof}
The non-central algebra has zero center, so
$\dim\Inn\g_\ell(d)=\dim\g_\ell(d)$.  Theorem~\ref{thm:der0} adds the one-dimensional outer direction $\C\delta_0$.  In the exceptional case of Theorem~\ref{thm:local-noncentral}, the additional pure-local summand has dimension $m-1$, and therefore
\[
\dim\LocDer\g_\ell(1)
=(m+5)+(m-1)=2m+4.
\]

Both central extensions have one-dimensional center, so their inner derivation algebras have codimension one in the underlying Lie algebras.  Theorem~\ref{thm:massder} adds one outer direction for $d\ne2$ and two for $d=2$, while Theorem~\ref{thm:exoticder} adds two outer directions in the exotic case.  The equalities $\LocDer=\Der$ from Theorems~\ref{thm:local-mass} and~\ref{thm:local-exotic} then give the stated formulas.
\end{proof}

The exceptional non-central case has an especially transparent structure.  Recall that the pure-local component $\mathcal P_m\cong U_{m-2}$ consists of maps which vanish on $U_m$ and map $\mathfrak{sl}_2$ into $U_m$.

\begin{theorem}\label{thm:locder-lie-structure}
Let $d=1$ and let $m=2\ell\ge3$ be odd.  Then $\LocDer\g_\ell(1)$ is a Lie algebra under the commutator of endomorphisms.  More precisely,
\begin{equation}\label{eq:locder-semidirect}
\LocDer\g_\ell(1)
=\Der\g_\ell(1)\ltimes\mathcal P_m,
\qquad
[\mathcal P_m,\mathcal P_m]=0,
\end{equation}
where $\mathcal P_m\cong U_{m-2}$ is an abelian ideal.  Equivalently,
\begin{equation}\label{eq:locder-abstract-structure}
\LocDer\g_\ell(1)
\cong
\bigl(\mathfrak{sl}_2\oplus\C d_0\bigr)
\ltimes\bigl(U_m\oplus U_{m-2}\bigr),
\end{equation}
where $d_0$ commutes with $\mathfrak{sl}_2$ and acts as the identity on both $U_m$ and $U_{m-2}$.
\end{theorem}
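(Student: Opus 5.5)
Our plan is to use the vector-space decomposition $\LocDer\g_\ell(1)=\Der\g_\ell(1)\oplus\mathcal P_m$ from Theorem~\ref{thm:local-noncentral}(iii), together with the explicit description of $\mathcal P_m$ from Propositions~\ref{prop:pure-local-part} and~\ref{prop:sl2-local-coboundaries}. The latter says that $\mathcal P_m$ consists of maps $\Psi$ with $\Psi(U_m)=0$ and $\Psi(\mathfrak{sl}_2)\subseteq U_m$, obtained from the $U_{m-2}$-summand of $\operatorname{Hom}(\mathfrak{sl}_2,U_m)$. Write $L=\g_\ell(1)=\mathfrak{sl}_2\ltimes U_m$. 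The proof reduces to computing three types of commutators.

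\textbf{Step 1: $\mathcal P_m$ is abelian.} For $\Psi,\Psi'\in\mathcal P_m$, the composite $\Psi\Psi'$ vanishes. On $U_m$ this is immediate because $\Psi'(U_m)=0$. On $\mathfrak{sl}_2$ it holds because $\Psi'$ lands in $U_m$, where $\Psi$ is zero. Hence $[\Psi,\Psi']=0$.

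\textbf{Step 2: $\mathcal P_m$ is an ideal.} By Theorem~\ref{thm:der0}, $\Der L=\ad(\mathfrak{sl}_2)\oplus\ad(U_m)\oplus\C\delta_0$, so it suffices to bracket $\Psi$ with each piece.
\begin{itemize}
\item For $u\in U_m$: $\ad u$ maps $\mathfrak{sl}_2$ into $U_m$ and kills $U_m$. By the argument of Step 1, $[\ad u,\Psi]=0$.
\item For $\delta_0$: it is the identity on $U_m$ and zero on $\mathfrak{sl}_2$. Then $[\delta_0,\Psi](x)=\delta_0\Psi(x)-\Psi(\delta_0 x)=\Psi(x)$ for $x\in\mathfrak{sl}_2$, and both sides vanish on $U_m$. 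So $[\delta_0,\Psi]=\Psi$, and $\delta_0$ acts as the identity on $\mathcal P_m$.
\item For $a\in\mathfrak{sl}_2$: $[\ad a,\Psi](x)=\rho_m(a)\Psi(x)-\Psi([a,x])$ on $\mathfrak{sl}_2$, and it is zero on $U_m$ since $\Psi(U_m)=0$ and $\ad a$ preserves $U_m$. This is precisely the natural $\mathfrak{sl}_2$-action on $\operatorname{Hom}(\mathfrak{sl}_2,U_m)$ used in the proof of Proposition~\ref{prop:first-exception}. The range-local space $\mathcal R_m$ is invariant under this action, and the isotypic summand $U_{m-2}$ is a submodule. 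Hence $[\ad a,\mathcal P_m]\subseteq\mathcal P_m$, and the action is irreducible of highest weight $m-2$.
\end{itemize}

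\textbf{Step 3: closure and the semidirect product.} Since $\Der L$ is a Lie subalgebra and $\mathcal P_m$ is an ideal, the sum is closed under commutators, which gives \eqref{eq:locder-semidirect}.

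\textbf{Step 4: the abstract form \eqref{eq:locder-abstract-structure}.} Since $L$ is centerless, $\Der L\cong L\oplus\C\delta_0$ with $\ad(\mathfrak{sl}_2)\cong\mathfrak{sl}_2$. The ideal $\ad(U_m)\cong U_m$ is abelian, and $[\delta_0,\ad u]=\ad(\delta_0u)=\ad u$. Also $[\delta_0,\ad a]=\ad(\delta_0 a)=0$, so $\delta_0$ commutes with $\ad(\mathfrak{sl}_2)$.

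The subspace $\ad(U_m)\oplus\mathcal P_m$ is abelian: Step 1 covers $\mathcal P_m$ with itself, Step 2 covers the mixed brackets, and $\ad(U_m)$ is abelian. It is stable under $\mathfrak{sl}_2\oplus\C d_0$ with $d_0=\delta_0$ acting as the identity on both summands. Therefore $\ad(U_m)\oplus\mathcal P_m$ is the radical complement to the reductive part $\mathfrak{sl}_2\oplus\C d_0$, which yields \eqref{eq:locder-abstract-structure}.

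The only delicate point is in Step 2: the commutator with $\ad a$ must agree exactly with the module action that defines the $U_{m-2}$-summand, including signs and the convention for the action on $\operatorname{Hom}$. Otherwise irreducibility of $\mathcal P_m$ would not follow directly from Proposition~\ref{prop:sl2-local-coboundaries}. We would check this against the lowering computation $\Psi_1=f\cdot\Psi_0$ in Proposition~\ref{prop:first-exception}.
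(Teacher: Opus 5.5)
Your proposal is correct and follows the paper's proof essentially step for step: composites of pure-local maps vanish, so $\mathcal P_m$ is abelian; the commutators with $\ad a$, $\ad u$ and $\delta_0$ give the ideal property; and $[\delta_0,\ad u]=\ad u$, $[\delta_0,\ad a]=0$ give the abstract semidirect-product form. Two small corrections: $\ad(U_m)\oplus\mathcal P_m$ is the abelian ideal complementing the reductive subalgebra $\mathfrak{sl}_2\oplus\C d_0$, not a ``radical complement'', since the radical also contains $d_0$; and the sign issue you flag does not arise, because $[\ad a,F]$ equals the standard action $x\mapsto\rho_m(a)F(x)-F([a,x])$ on $\operatorname{Hom}(\mathfrak{sl}_2,U_m)$ and vanishes on $U_m$.
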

\begin{proof}
Let $F,G\in\mathcal P_m$.  Since both maps vanish on $U_m$ and map $\mathfrak{sl}_2$ into $U_m$, one has
\[
F\circ G=G\circ F=0.
\]
Thus $[F,G]=0$.

We next show that $\mathcal P_m$ is invariant under $\Der\g_\ell(1)$.  By Theorem~\ref{thm:der0}, every derivation is a sum of an inner derivation and a scalar multiple of $\delta_0$.  If $a\in\mathfrak{sl}_2$, then for $F\in\mathcal P_m$ and $x\in\mathfrak{sl}_2$,
\[
[\ad a,F](x)=\rho_m(a)F(x)-F([a,x]),
\]
which is precisely the natural $\mathfrak{sl}_2$-action on
$\operatorname{Hom}(\mathfrak{sl}_2,U_m)$.  The summand
$\mathcal P_m\cong U_{m-2}$ is an $\mathfrak{sl}_2$-submodule by Proposition~\ref{prop:sl2-local-coboundaries}, so
$[\ad a,F]\in\mathcal P_m$.

If $u\in U_m$, then
\[
[\ad u,F]=0.
\]
Indeed, $[u,U_m]=0$, while $[u,\mathfrak{sl}_2]\subset U_m$ and $F(U_m)=0$.  Finally,
\[
[\delta_0,F]=F,
\]
because $\delta_0$ is the identity on $U_m$ and vanishes on $\mathfrak{sl}_2$.  Hence $\mathcal P_m$ is an abelian ideal of the full local-derivation space, proving \eqref{eq:locder-semidirect}.

Since $Z(\g_\ell(1))=0$, the inner derivations identify with
$\mathfrak{sl}_2\ltimes U_m$.  Moreover,
\[
[\delta_0,\ad u]=\ad(\delta_0u)=\ad u,
\qquad
[\delta_0,\ad a]=0
\]
for $u\in U_m$ and $a\in\mathfrak{sl}_2$.  Together with the action on $\mathcal P_m$ computed above, this gives the abstract semidirect-product description \eqref{eq:locder-abstract-structure}.
\end{proof}

\begin{corollary}\label{cor:locder-radical}
Under the assumptions of Theorem~\ref{thm:locder-lie-structure}, the Levi factor of
$\LocDer\g_\ell(1)$ is $\mathfrak{sl}_2$, and
\[
\operatorname{Rad}(\LocDer\g_\ell(1))
=\C d_0\ltimes(U_m\oplus U_{m-2}).
\]
Furthermore,
\[
[\LocDer\g_\ell(1),\LocDer\g_\ell(1)]
=\mathfrak{sl}_2\ltimes(U_m\oplus U_{m-2}),
\qquad
Z(\LocDer\g_\ell(1))=0.
\]
\end{corollary}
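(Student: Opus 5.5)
The plan is to read everything off the abstract description \eqref{eq:locder-abstract-structure} from Theorem~\ref{thm:locder-lie-structure}. Put $L=\LocDer\g_\ell(1)$ and $R=\C d_0\ltimes(U_m\oplus U_{m-2})$. The first step is to show that $R$ is a solvable ideal and that $L/R\cong\mathfrak{sl}_2$.

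\emph{Step 1: $R$ is a solvable ideal.} The subspace $N=U_m\oplus U_{m-2}$ is an abelian ideal. For $U_m$ this holds because $V$ is abelian, so $[\ad u,\ad u']=\ad[u,u']=0$. The cross brackets $[\ad u,F]$ vanish by the proof of Theorem~\ref{thm:locder-lie-structure}, and $[\mathcal P_m,\mathcal P_m]=0$. The element $d_0$ acts on $N$ by the identity and commutes with $\mathfrak{sl}_2$. Hence $R$ is an ideal, $[R,R]\subseteq N$, and $[N,N]=0$, so $R$ is solvable. Since $L/R\cong\mathfrak{sl}_2$ is semisimple, $R$ contains the radical. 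It is also contained in the radical, being a solvable ideal. Therefore $\operatorname{Rad}L=R$, and the image of $\mathfrak{sl}_2$ (the inner derivations $\ad a$, $a\in\mathfrak{sl}_2$) is a Levi factor, unique up to conjugacy by Malcev's theorem.

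\emph{Step 2: the derived algebra.} First, $[\mathfrak{sl}_2,\mathfrak{sl}_2]=\mathfrak{sl}_2$. Next, $[d_0,n]=n$ for every $n\in N$, so $N\subseteq[L,L]$. Conversely, every bracket has zero $d_0$-component. This is because $d_0$ is central modulo $N$, i.e. the quotient $L/N\cong\mathfrak{sl}_2\oplus\C d_0$ is a direct sum of Lie algebras. Hence $[L,L]=\mathfrak{sl}_2\ltimes N$.

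\emph{Step 3: the center.} Write $z=a+\lambda d_0+n$ with $z\in Z(L)$. Bracketing with $d_0$ gives $[d_0,z]=n$, so $n=0$. Bracketing with $N$ and using that $d_0$ acts as the identity gives $\rho(a)n'+\lambda n'=0$ for all $n'\in N$. Take $n'$ in $U_m$, which is non-trivial irreducible. On each eigenvector of $\rho_m(h)$ we then get that $\lambda$ plus the corresponding weight of $a$ vanishes. If $a$ is semisimple and nonzero, this is impossible, since its weights on $U_m$ are distinct. In general, the scalar $\lambda$ must equal the negative of every eigenvalue of $\rho_m(a)$, which forces $\rho_m(a)=-\lambda I$. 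Since $\rho_m$ is traceless and faithful, this gives $\lambda=0$ and $a=0$. Hence $Z(L)=0$.

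The main obstacle is only bookkeeping, namely checking that $N$ is genuinely abelian inside $L$, in particular that $[\ad u,F]=0$. This was already established in the proof of Theorem~\ref{thm:locder-lie-structure}, so I would simply cite it. Everything else is standard Levi-decomposition reasoning.
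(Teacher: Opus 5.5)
Your proposal is correct and follows the paper's route: the paper reads the radical, the derived algebra and the center directly off the decomposition \eqref{eq:locder-abstract-structure}, using that $\mathfrak{sl}_2$ is semisimple, that $U_m$ and $U_{m-2}$ are non-trivial, and that $d_0$ acts as the identity on both, and your Steps 1--3 spell out exactly these deductions. In Step 3 the detour through eigenvectors of $\rho_m(h)$ is unnecessary and only makes sense when $a$ is a multiple of $h$: the identity $\rho_m(a)n'+\lambda n'=0$ for all $n'\in U_m$ already gives $\rho_m(a)=-\lambda I$, and your trace-and-faithfulness argument then finishes.
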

\begin{proof}
All assertions follow directly from \eqref{eq:locder-abstract-structure}.  The two irreducible modules are non-trivial, $d_0$ acts non-trivially on both of them, and $\mathfrak{sl}_2$ is semisimple.  Hence the displayed solvable ideal is the radical, the commutator has codimension one, and no non-zero element can be central.
\end{proof}

\begin{remark}\label{rem:all-locder-lie}
Combining Theorem~\ref{thm:locder-lie-structure} with the rigidity theorems shows that in every case considered in this paper the space of local derivations is a Lie algebra.  In the rigid cases this is immediate from $\LocDer=\Der$; in the exceptional family the additional local maps form the abelian ideal $\mathcal P_m$.
\end{remark}

\section{Conclusion}
We conclude by collecting the principal consequences of the preceding derivation and local-derivation classifications and by emphasizing the structural effect of the central cocycles.

We have obtained a uniform local-derivation picture for the conformal Galilei algebra and its two standard one-dimensional central extensions. The derivation calculations show that the scalar outer derivation persists in all three settings, with the central element acquiring twice the module weight in the two central extensions; in spatial dimension two those extensions also admit an additional central shear. The local analysis then separates sharply according to the radical.

For the abelian radical, the problem reduces to algebraic reflexivity of the module action and to the classification of range-local coboundaries. This produces the exceptional irreducible component $U_{2\ell-2}$ exactly when $d=1$ and $2\ell\ge3$ is odd. By contrast, the mass and exotic Heisenberg pairings couple complementary levels and eliminate this extra freedom, so every local derivation is a derivation. This central-cocycle rigidification is the structural distinction linking the three cases.  In addition, the exceptional non-central local-derivation space is itself a Lie algebra: it is the semidirect product of the ordinary derivation algebra with an abelian irreducible pure-local ideal.  Thus the classification determines not only which local derivations occur, but also the algebraic structure carried by the full local-derivation space.

\section*{Declarations}
\noindent\textbf{Conflict of interest.}
The authors declare that they have no relevant financial or non-financial interests to disclose.

\medskip
\noindent\textbf{Funding.}
The authors declare that no funds, grants, or other financial support were received for the preparation of this manuscript.

\medskip
\noindent\textbf{Data availability.}
Data sharing is not applicable to this article because no datasets were generated or analyzed during the current study.

\medskip
\noindent\textbf{Author contributions.}
K.A., B.B.S., and B.B.Y. contributed to the conceptualization, mathematical analysis, verification, and preparation of the manuscript. All authors read and approved the final manuscript.


\begin{thebibliography}{99}

\bibitem{AAYAssoc}
K.~Abdurasulov, Sh.~A.~Ayupov and B.~B.~Yusupov,
\emph{Local and 2-local derivations on filiform associative algebras},
J. Algebra Appl. \textbf{24} (2025), no.~10, 2550242,
DOI: 10.1142/S0219498825502421.

\bibitem{AdYQF}
J.~Q.~Adashev and B.~B.~Yusupov,
\emph{Local derivation of naturally graded quasi-filiform Leibniz algebras},
Uzbek Math. J. \textbf{65} (2021), no.~1, 28--41,
DOI: 10.29229/uzmj.2021-1-3.

\bibitem{AdYNull}
J.~Q.~Adashev and B.~B.~Yusupov,
\emph{Local derivations and automorphisms of direct sum null-filiform Leibniz algebras},
Lobachevskii J. Math. \textbf{43} (2022), no.~12, 3407--3413,
DOI: 10.1134/S1995080222150021.

\bibitem{AizawaIsaac}
N.~Aizawa and P.~S.~Isaac,
\emph{On irreducible representations of the exotic conformal Galilei algebra},
J. Phys. A: Math. Theor. \textbf{44} (2011), 035401,
DOI: 10.1088/1751-8113/44/3/035401.

\bibitem{AizawaIsaacKimura}
N.~Aizawa, P.~S.~Isaac and Y.~Kimura,
\emph{Highest weight representations and Kac determinants for a class of conformal Galilei algebras with central extension},
Int. J. Math. \textbf{23} (2012), no.~11, 1250118,
DOI: 10.1142/S0129167X12501182.

\bibitem{ANYLm}
A.~K.~Alauadinov, Z.~Normatov and B.~B.~Yusupov,
\emph{2-local derivations on the perfect Lie algebras $\mathcal L^m$},
Algebraic Structures and Their Applications \textbf{12} (2025), no.~3, 263--271.

\bibitem{AYCG}
A.~K.~Alauadinov and B.~B.~Yusupov,
\emph{Local derivations of conformal Galilei algebra},
Commun. Algebra \textbf{52} (2024), no.~6, 2489--2508,
DOI: 10.1080/00927872.2023.2301539.

\bibitem{AYSch}
A.~K.~Alauadinov and B.~B.~Yusupov,
\emph{Local derivations of the Schr\"odinger algebras},
Algebra Colloq. \textbf{32} (2025), no.~3, 467--480,
DOI: 10.1142/S1005386725000343.

\bibitem{AYSchn}
A.~K.~Alauadinov and B.~B.~Yusupov,
\emph{Local derivation on the Schr\"odinger Lie algebra in $(n+1)$-dimensional space-time},
Bull. Karaganda Univ. Math. Ser. (2026), no.~1(121), 55--66,
DOI: 10.31489/2026M1/55-66.

\bibitem{Andrzejewski}
K.~Andrzejewski, J.~Gonera and P.~Ma\'{s}lanka,
\emph{Nonrelativistic conformal groups and their dynamical realizations},
Phys. Rev. D \textbf{86} (2012), 065009,
DOI: 10.1103/PhysRevD.86.065009.

\bibitem{AAtYAnti}
Sh.~A.~Ayupov, Kh.~Atajonov and B.~B.~Yusupov,
\emph{Local and 2-local anti-derivations on solvable Lie algebras},
European J. Math. \textbf{11} (2025), no.~3, Article~52,
DOI: 10.1007/s40879-025-00841-w.

\bibitem{AKHY}
Sh.~A.~Ayupov, A.~Kh.~Khudoyberdiyev and B.~B.~Yusupov,
\emph{Local and 2-local derivations of solvable Leibniz algebras},
Int. J. Algebra Comput. \textbf{30} (2020), no.~6, 1185--1197,
DOI: 10.1142/S021819672050037X.

\bibitem{AK}
Sh.~A.~Ayupov and K.~K.~Kudaybergenov,
\emph{Local derivations on finite-dimensional Lie algebras},
Linear Algebra Appl. \textbf{493} (2016), 381--398,
DOI: 10.1016/j.laa.2015.11.034.

\bibitem{AKR}
Sh.~A.~Ayupov, K.~K.~Kudaybergenov and I.~S.~Rakhimov,
\emph{2-local derivations on finite-dimensional Lie algebras},
Linear Algebra Appl. \textbf{474} (2015), 1--11,
DOI: 10.1016/j.laa.2015.01.016.

\bibitem{AKYp}
Sh.~A.~Ayupov, K.~K.~Kudaybergenov and B.~B.~Yusupov,
\emph{Local and 2-local derivations of $p$-filiform Leibniz algebras},
J. Math. Sci. \textbf{245} (2020), no.~3, 359--367,
DOI: 10.1007/s10958-020-04697-1.

\bibitem{AKYWitt}
Sh.~A.~Ayupov, K.~K.~Kudaybergenov and B.~B.~Yusupov,
\emph{2-local derivations on generalized Witt algebras},
Linear Multilinear Algebra \textbf{69} (2021), no.~16, 3130--3140,
DOI: 10.1080/03081087.2019.1708846.

\bibitem{AKYLocSimple}
Sh.~A.~Ayupov, K.~K.~Kudaybergenov and B.~B.~Yusupov,
\emph{Local and 2-local derivations of locally simple Lie algebras},
J. Math. Sci. \textbf{278} (2024), no.~4, 613--622,
DOI: 10.1007/s10958-024-06943-2.

\bibitem{AYQF}
Sh.~A.~Ayupov and B.~B.~Yusupov,
\emph{Local mapping on naturally graded quasi-filiform Leibniz algebras},
Bull. Inst. Math. \textbf{9} (2026), no.~3, 94--104.

\bibitem{ChenZhaoZhao}
Y.~Chen, K.~Zhao and Y.~Zhao,
\emph{Local derivations on Witt algebras},
Linear Multilinear Algebra \textbf{70} (2022), no.~6, 1159--1172,
DOI: 10.1080/03081087.2020.1754750.

\bibitem{ChenLiYu}
Z.~Chen, M.~Li and Y.~Yu,
\emph{Derivations, $\frac12$-derivations and transposed Poisson algebra structures on conformal Galilei algebras},
Commun. Algebra (2026), published online,
DOI: 10.1080/00927872.2025.2604120.

\bibitem{GalajinskyMasterov}
A.~Galajinsky and I.~Masterov,
\emph{Dynamical realization of $\ell$-conformal Galilei algebra and oscillators},
Nucl. Phys. B \textbf{866} (2013), no.~2, 212--227,
DOI: 10.1016/j.nuclphysb.2012.09.004.

\bibitem{HC}
Z.~Hao and L.~Chen,
\emph{Two conjectures and applications on local and almost inner derivations},
Preprint (2026).

\bibitem{Kadison}
R.~V.~Kadison,
\emph{Local derivations},
J. Algebra \textbf{130} (1990), no.~2, 494--509.

\bibitem{KOK}
K.~K.~Kudaybergenov, B.~A.~Omirov and T.~K.~Kurbanbaev,
\emph{Local derivations on solvable Lie algebras of maximal rank},
Commun. Algebra \textbf{50} (2022), no.~9, 3816--3826,
DOI: 10.1080/00927872.2022.2045494.

\bibitem{LarsonSourour}
D.~R.~Larson and A.~R.~Sourour,
\emph{Local derivations and local automorphisms of $B(X)$},
Proc. Sympos. Pure Math. \textbf{51} (1990), Part~2, 187--194.

\bibitem{RVY}
D.~Reymbaeva, N.~Z.~Vaisova and B.~B.~Yusupov,
\emph{Local super-derivations of the $n$-th super Schr\"odinger algebras},
Bol. Soc. Mat. Mex. \textbf{31} (2025), Article~139.

\bibitem{WangTang}
P.~Wang and X.~Tang,
\emph{2-local derivations on Schr\"odinger algebra in $(2+1)$-dimensional space-time},
Commun. Algebra \textbf{51} (2023), no.~2, 676--687,
DOI: 10.1080/00927872.2022.2108045.

\bibitem{YaoYu}
L.~Yao and Y.~Yu,
\emph{2-local derivations of the $n$-th Schr\"odinger algebra},
Commun. Algebra \textbf{53} (2025), no.~6, 2254--2265,
DOI: 10.1080/00927872.2024.2433682.

\bibitem{YuChenBorel}
Y.~Yu and Z.~Chen,
\emph{Local derivations on Borel subalgebras of finite-dimensional simple Lie algebras},
Commun. Algebra \textbf{48} (2020), no.~1, 1--10,
DOI: 10.1080/00927872.2018.1541465.

\bibitem{YAutp}
B.~B.~Yusupov,
\emph{Local automorphisms of some $p$-filiform Leibniz algebras},
Ricerche Mat. \textbf{74} (2025), 1341--1372.

\bibitem{YYFil}
B.~B.~Yusupov and I.~Yuldashev,
\emph{Local derivations on solvable Lie algebras with a filiform nilradical},
Asian-Eur. J. Math. \textbf{18} (2025), no.~12, 2550061.

\bibitem{ZhaoChengCG}
Y.~Zhao and Y.~Cheng,
\emph{2-local derivation on the conformal Galilei algebra},
arXiv:2103.04237 [math.RA] (2021),
DOI: 10.48550/arXiv.2103.04237.

\end{thebibliography}
\end{document}